\def\be#1\ee{\begin{equation}#1\end{equation}}
\theoremstyle{plain}
\newtheorem{Thm}{Theorem}
\newtheorem{proposition}[Thm]{Proposition}
\newtheorem{lemma}[Thm]{Lemma}
\newtheorem*{lemma*}{Lemma}
\newtheorem*{Theorem*}{Theorem}
\newtheorem{AThm}{Theorem}[section]
\newtheorem{Alemma}[AThm]{Lemma}
\newtheorem{Aproposition}[AThm]{Proposition}
\newtheorem{Acorollary}[AThm]{Corollary}
\theoremstyle{definition}
\newtheorem{remark}[Thm]{Remark}
\newtheorem*{remark*}{Remark}
\newtheorem*{example*}{Example}
\def\printname#1{
    \if\draft
        \smash{\makebox[0pt]{\hspace{-0.5in}
            \raisebox{8pt}{\tt\tiny #1}}}
    \fi
}
\newlength{\standardunitlength}
\long\def\@makecaption#1#2{%
     \vskip 10pt

\setbox\@tempboxa\hbox{
       \small\sf{\bfcaptionfont #1. }\ignorespaces #2}%
     \ifdim \wd\@tempboxa >\captionwidth {%
         \rightskip=\@captionmargin\leftskip=\@captionmargin
         \unhbox\@tempboxa\par}%
       \else
         \hbox to\hsize{\hfil\box\@tempboxa\hfil}%
     \fi}
\font\bfcaptionfont=cmssbx10 scaled \magstephalf
\newdimen\@captionmargin\@captionmargin=2\parindent
\newdimen\captionwidth\captionwidth=\hsize
\newcommand{\id}{\operatorname{id}}
\newcommand{\ord}{\operatorname{ord}}
\newcommand{\qbinom}[2]{\text{$\left[\begin{array}{c}#1\\ #2\end{array}
\right]$}}
\def\BZ{\mathbb Z}
\def\BQ{\mathbb Q}
\def\cT{\mathcal T}
\def\cM{\mathcal M}
\def\cR{\mathcal R}
\def\cL{\mathcal L}
\def\M{\mathcal M}
\def\N{{\mathbb N}}
\def\F{\mathcal F}
\def\R{\mathcal R}
\def\calS{\mathcal S}
\newcommand{\sn}{\operatorname{sn}}
\def\bp{{\mathbf p}}
\def\bn{{\mathbf n}}
\def\bj{{\mathbf j}}
\newcommand{\U}{\mathcal{U}}
\def\ve{\varepsilon}
\def\Z{\BZ}
\def\bk{\mathbf{k}}
\def\Q{\BQ}
\def\n{{\mathbf n}}
\def\bj{{\mathbf j}}
\def\ev{\mathrm{ev}}
\def\Habiro{\widehat{\Z[q]}}
\newcommand{\tF}{\tilde{F}}
\newcommand\nc{\newcommand}
\nc\FIG[3]{\begin{figure}
    \includegraphics[#3]{#1.eps}
    \caption{#2}
    \label{fig:#1}
    \end{figure}}
\nc\incl[2]{{\includegraphics[height=#1]{#2.eps}}}
\nc\zzzcolon {\colon\thinspace}
\nc\zzzvert {\ |\ }
\nc\simh{\underset{h}{\sim}}
\nc\trr{\triangleright}
\nc\et{\incl{1em}{bot0}}
\nc\ul{\underline}
\nc\modone {{\mathbf 1}}
\nc\modA {{\mathcal A}}
\nc\sfA{{\mathsf A}}
\nc\modb {{\mathsf b}}
\nc\modB {{\mathsf B}}
\nc\cB{\check{\modB }}
\nc\modC {{\mathcal C}}
\nc\modH {{\langle \HH\rangle }}
\nc\modh {{\mathsf h}}
\nc\modI {{\mathcal I}}
\nc\modJ {{\mathsf J}}
\nc\modk {{\mathbf k}}
\nc\modL {{\mathcal L}}
\nc\LL{{\mathsf L}}
\nc\HH{{\mathsf H}}
\nc\modM {{\mathcal M}}
\nc\sfP{{\mathsf P}}
\nc\modP {{\mathcal P}}
\nc\modQ {{\mathbb Q}}
\nc\modR {{\mathbb R}}
\nc\modr {v}
\nc\modS {{\mathcal S}}
\nc\modT {{\mathcal T}}
\nc\modV {{\mathsf V}}
\nc\modY {{\mathcal Y}}
\nc\modZ {{\mathbb Z}}
\nc\ad{{\operatorname{ad}}}
\nc\coad{{\operatorname{coad}}}
\nc\coadb{\underline{\coad}}
\nc\adb{\underline{\ad}}
\nc\coev{{\operatorname{coev}}}
\nc\pat{{\operatorname{\sf pat}}}
\nc\low{{\operatorname{low}}}
\nc\up{{\operatorname{up}}}
\nc\evdn{{\incl{.8em}{evdown}}}
\nc\cvdn{{\incl{.8em}{coevdown}}}
\nc\evup{{\incl{.8em}{evup}}}
\nc\cvup{{\incl{.8em}{coevup}}}
\nc\bH{{\underline H}}
\nc\bD{{\underline\Delta }}
\nc\bS{{\underline S}}
\nc\tS{{\tilde{S}}}
\nc\Ob{\operatorname{Ob}}
\begin{document}

\title[A unified  quantum  invariant]{A unified
 quantum SO(3) invariant\\[.1cm] for rational homology 3--spheres}

\author{Anna Beliakova}
\author{Irmgard B\"uhler}
\address{Institut f\"ur Mathematik, Universit\"at Zurich,
Winterthurerstrasse 190, 8057 Z\"urich, Switzerland}
\email{anna@math.uzh.ch, irmgard.buehler@math.uzh.ch}

\author{Thang Le}
\address{Department of Mathematics, Georgia Institute of Technology,
Atlanta, GA 30332--0160, USA }
\email{letu@math.gatech.edu}

\begin{abstract}
Given
 a rational homology 3--sphere  $M$  with $|H_1(M,\Z)|=b$ and a link 
$L$ inside $M$, colored by odd numbers, 
 we construct a unified invariant $I_{M,L}$
belonging to a modification of the Habiro ring where $b$ is inverted.
Our unified invariant dominates  the whole set of the $SO(3)$
  Witten--Reshetikhin--Turaev  invariants
of the pair $(M,L)$.
If $b=1$ and $L=\emptyset$, $I_M$ coincides with  Habiro's  invariant
of integral homology 3--spheres. For $b>1$, the unified invariant
defined by the third author is determined by $I_M$.
One of the applications are the new Ohtsuki series
 (perturbative expansions of $I_M$ at  roots of unity)
 dominating all quantum  $SO(3)$  invariants.


\end{abstract}

\maketitle


\section*{Introduction}

\subsection*{Background}
The $SU(2)$  Witten--Reshetikhin--Turaev (WRT)  invariant
is defined for any closed oriented 3--manifold $M$ and any
root of unity $\xi$ \cite{Tu}. Kirby and Melvin \cite{KM}
introduced the $SO(3)$ version of the invariant
$\tau_M(\xi)\in \Q(\xi)$
for roots of unity $\xi$ of odd order.
If the order  of $\xi$ is
prime, then by the results of Murakami \cite{Mu} (also Masbaum--Roberts
  \cite{MR}),
$\tau_M(\xi)$ is an algebraic
integer. This integrality result was the starting point for the
construction of finite type 3--manifold invariants, Ohtsuki series
 \cite{Ohtsukibook},
integral TQFTs, representations of the mapping class group
over $\Z[\xi]$ \cite{GM},
and  categorification of quantum 3--manifold invariants \cite{Kho}.
The proofs in \cite{Mu} and \cite{MR}
depend heavily on the arithmetic of $\Z[\xi]$ for a  root of unity $\xi$ of {\em prime} order
and do not extend to other  roots of unity.

 Is it true that $\tau_M(\xi)$ is always an algebraic integer
(belongs to $\Z[\xi]$),  even when the order of $\xi$ is not a
 prime? The positive answer to this question
was given first for {\em integral homology
spheres} by Habiro \cite{Ha}, and then for arbitrary 3--manifolds by
 the first and third author \cite{BL}, in connection with the
study of ``strong integrality''.

What Habiro proved for integral homology 3--spheres is actually
much stronger than integrality.
For any  integral homology
3--sphere $M$,
 Habiro \cite{Ha} constructed a {\em unified invariant} $J_M$
 whose evaluation  at any root of unity coincides with
the value of the  Witten--Reshetikhin--Turaev
invariant at that root.
Habiro's unified invariant $J_M$ is an element of the following
ring (Habiro's ring)
\[
\Habiro:=\lim_{\overleftarrow{\hspace{2mm}k\hspace{2mm}}}
\frac{\Z[q]}{
((q;q)_k)},  \qquad \text{ where} \quad (q;q)_k = \prod_{j=1}^k (1-q^j).
\]
Every element $f(q)\in \Habiro$ can be written as an infinite sum
\[
f(q)= \sum_{k\ge 0} f_k(q)\, (1-q)(1-q^2)...(1-q^k),
\]
with $f_k(q)\in \Z[q]$. When $q=\xi$, a root of unity,
 only a finite number of terms on the right hand side are not zero,
hence the right hand side gives a well--defined value, called the evaluation
$\ev_\xi(f(q))$.  Since $f_k(q)\in \Z[q]$,
 $\ev_\xi(f(q))\in \Z[\xi]$ is an algebraic integer.
 The fact that the unified invariant belongs to $\Habiro$
is stronger than just integrality of $\tau_M(\xi)$. We will  refer to it
as ``strong''  integrality.

The Habiro ring has beautiful
arithmetic properties.
Every element $f(q) \in \Habiro$ can be considered
 as a function whose domain is the set
 of roots of unity.
Moreover, there is a natural Taylor series for $f$ at every root
of unity.
Two elements $f,g \in \Habiro$ are the same if and
only if their Taylor series at a root of unity coincide.
In addition, each function $f(q) \in \Habiro$ is totally determined
by its values at, say,
infinitely many  roots of order $3^n,\, n\in \N$.
Due to these properties the Habiro ring is also called
a ring of ``analytic functions at roots of unity''
\cite{Ha}. Thus belonging to $\Habiro$ means that the
collection of the $SO(3)$ WRT invariants is far from a random
collection of algebraic integers; together they form a nice function.

Perturbative expansion at 1 of WRT invariants for rational homology 3--spheres
  was first constructed by
Ohtsuki in the case when the order of the quantum parameter $\xi$ is
prime \cite{Oh}.
General properties of the Habiro ring imply that for any integral homology
3--sphere $M$,
the Taylor expansion of the unified invariant $J_M$
at $q= 1$ coincides with the Ohtsuki series
and dominates WRT invariants of $M$ at all roots of unity (not only of prime order).

To generalize Habiro's results to
rational homology 3--spheres, new ideas and techniques are required.
Strong integrality of quantum invariants for rational homology 3--spheres
was studied in \cite{Le} and \cite{BL}. Among other things,
in \cite{Le}  a unified invariant was constructed for the case when
 the order $r$ of the quantum parameter $\xi$
is coprime with $b$. In \cite{BL}, it was proved that
 for any 3--manifold $M$ (not necessary  a rational homology 3--sphere),
 the $SO(3)$
WRT invariant $\tau_M(\xi)$ is always an
algebraic integer, i.e. $\tau_M(\xi)\in \Z[\xi]$  with no restriction on
 the order of $\xi$ at all.
 There we used a
(2nd order) Laplace transform method \cite{BBL} and a difficult
identity of Andrews \cite{And} in $q$--calculus, generalizing those of
 Rogers--Ramanujan.

Thus, although we have had integrality of all SO(3) WRT invariants,
we still
lacked a ``strong integrality'' for the case
when $(r,b) \neq 1$. This is the main object of this paper.

In this paper
 we  will generalize Habiro's
 construction of the unified invariant
 to all  rational homology
3--spheres.
Our new unified invariant $I_M$
dominates SO(3)  WRT invariants also in the case
when  the order $r$
of the quantum parameter is not coprime with $b=|H_1(M,\Z)|$.
Although this includes the case $(r,b)=1$ of \cite{Le},
the ring our invariant belongs to
 is simpler than the one obtained in \cite{Le} and \cite{BL}. In particular,
 we don't need any fractional power of $q$.
We show that the Taylor expansion of our unified invariant
at a root of unity of order $c$ (new Ohtsuki series)
dominates all WRT invariants with $r=cl$ and $(l,b)=1$.

For rational homology 3--spheres the universal finite type
invariant was constructed by
 Le, Murakami and Ohtsuki  \cite{LMO}. It
 determines Ohtsuki series and, hence,
$\{\tau_M(\xi)\,|\,(\ord(\xi),b)=1\}$
 \cite{Le}. An interesting open question is whether
 the Le--Murakami--Ohtsuki invariant  determines
$I_M$.


\subsection*{Results} The WRT or quantum $SO(3)$ invariant $\tau_{M,L}(\xi)$ is defined for a pair of a closed 3--manifold $M$ and a link $L$ in it,
with link components colored by integers. Here $\xi$ is a root
of unity of odd order. We will recall the definitions in Section \ref{defs}.

Suppose
$M$ is a rational homology
3--sphere, i.e.
$|H_1(M,\Z)|:={\rm card}\,H_1 (M,\Z) < \infty$.
There is a unique decomposition
 $ H_1(M,\Z)=\bigoplus_{i}
\Z/{b_{i}\Z}$, where each $b_i$ is a prime power.
We renormalize the $SO(3)$  WRT invariant of
the pair  $(M, L)$ as follows:
\begin{equation}
\tau'_{M, L}(\xi)=\frac{\tau_{M,L}(\xi)}
{\prod\limits_{i}\;\,
\tau_{L(b_{i},1)}(\xi)}\; ,
\label{0910}
\end{equation}
where $L(b,a)$ denotes the $(b,a)$ lens space. We will see that
$\tau_{L(b,1)}(\xi)$ is always nonzero.

 For any positive integer $b$, we  define the
cyclotomic completion ring $\R_b$ to be
\be
\label{ab} \R_b:=\lim_{\overleftarrow{\hspace{2mm} k\hspace{2mm}}}
\frac{\Z[1/b][q]}{
\left((q;q^2)_k\right)}, \qquad \text{where} \quad
(q;q^2)_k = (1-q)(1-q^3) \dots (1-q^{2k-1}).
\ee
For any    $f(q)\in \R_b$ and a root of unity $\xi$ of {\em odd} order,
the evaluation $\ev_\xi (f(q)):= f(\xi)$ is well--defined.
Similarly, we put
$$\calS_b :=\lim_{\overleftarrow{\hspace{2mm}k\hspace{2mm}}}
\frac{\Z[1/b][q]}
{((q;q)_k)}\; .$$
Here the evaluation at any root of unity is well--defined. For odd $b$,
there is  a natural embedding
$\calS_b\to \R_b$, see Section \ref{cyc}.

Let us denote by $\M_b$ the set of rational homology 3--spheres such that
$|H_1(M,\Z)|$ divides $b^n$ for some $n$.
The main result of this paper is the following.
\begin{Thm}\label{main}
Suppose the components of a framed oriented link $L \subset M$ have odd colors, and $M\in \cM_b$. Then
there exists an invariant $I_{M,L} \in \R_b$,
such that for any root of unity $\xi$ of odd order
$$\ev_\xi(I_{M,L})=\tau'_{M,L}(\xi)\, .$$
In addition, if $b$ is odd, then
$I_{M,L}\in \calS_b$.
\end{Thm}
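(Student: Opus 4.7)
The plan is to construct $I_{M,L}$ from a suitable surgery presentation of $(M,L)$ and then verify three things in turn: that the formal expression lies in $\R_b$, that it is independent of the surgery datum, and that it evaluates correctly at every odd root of unity.

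First I would reduce to a standardized surgery diagram. Since $M\in\cM_b$ is a rational homology 3--sphere, one may present $(M,L)$ as surgery on $\mathcal L\cup L\subset S^3$ with $\mathcal L=\mathcal L_d\sqcup \mathcal L_0$, where $\mathcal L_d$ is a disjoint union of framed unknots whose framings are the prime-power divisors $b_i$ (contributing exactly the lens space factors $L(b_i,1)$) and $\mathcal L_0$ is algebraically split with zero linking to $\mathcal L_d$; such a reduction is available via Kirby calculus applied to the decomposition $H_1(M,\Z)=\bigoplus_i \Z/b_i\Z$, and it makes the normalization \eqref{0910} natural because dividing by $\prod_i\tau_{L(b_i,1)}(\xi)$ compensates the $\mathcal L_d$--contribution. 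I would then expand the colored Jones polynomial of $\mathcal L_0\cup L$ along each $\mathcal L_0$--component in Habiro's cyclotomic basis. The inner color sums on $\mathcal L_d$ become finite Laplace-type sums which, by the Laplace transform method of \cite{BBL} together with the Andrews identity \cite{And} exploited in \cite{BL}, are rewritten as infinite series whose coefficients lie in $\Z[1/b][q]$ and are highly divisible by $(q;q^2)_k$. The candidate $I_{M,L}$ is the resulting formal infinite series, interpreted as an element of $\R_b$.

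Once convergence in $\R_b$ is established, Kirby invariance is verified move by move: stabilization is immediate from the lens space normalization, and handle slides translate into compatibility identities for the cyclotomic basis already present in Habiro's framework. The evaluation property becomes a tautology: at an odd root of unity $\xi$ only finitely many terms survive, since $(q;q^2)_k$ vanishes at $\xi$ for $k$ large compared to $\ord(\xi)$, so the infinite series collapses to the finite state sum that defines $\tau'_{M,L}(\xi)$. For odd $b$, the embedding $\calS_b\hookrightarrow \R_b$ together with a sharpened divisibility estimate yielding $(q;q)_k$--divisibility (rather than only $(q;q^2)_k$--divisibility) upgrades the statement to $I_{M,L}\in\calS_b$.

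The main obstacle is the divisibility step underlying the construction itself: one must show that the series produced by Laplace-transforming the $\mathcal L_d$--sums has coefficients divisible by $(q;q^2)_k$ to the order required by the inverse limit, precisely in the regime $(\ord(\xi),b)\neq 1$ not covered by \cite{Le}. This is the combinatorial heart of the proof and the reason we must invert $b$ — the Andrews-type identity is what makes the $b$-adic denominators compatible with the cyclotomic completion, and without it the inverse limit defining $I_{M,L}$ would not converge.
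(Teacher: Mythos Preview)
Your reduction step is where the argument breaks. You assert that $(M,L)$ can be presented by surgery on $\mathcal L_d\sqcup\mathcal L_0$ with $\mathcal L_d$ a split union of framed unknots (framings the prime powers $b_i$) and $\mathcal L_0$ algebraically split with zero linking to $\mathcal L_d$. No such decomposition exists in general: a rational homology sphere need not admit a surgery presentation whose linking matrix is diagonal with prime-power entries. What Lemma~\ref{diagonalization} (quoted from \cite{Le}) actually says is that $M$ becomes diagonal only after connect-summing with suitable lens spaces $L(b_i,a_i)$, and these $a_i$ are typically \emph{not} $\pm 1$. So the normalization \eqref{0910} does not simply ``compensate the $\mathcal L_d$--contribution'' as you suggest; one is forced to \emph{divide} by the unified invariants of the auxiliary $L(b_i,a_i)$.

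This is exactly the difficulty you do not address, and it is the reason for the machinery in Sections~\ref{cyc} and~\ref{diag}. The unified invariant of a lens space $L(b,a)$ is \emph{not} invertible in $\R_b$ when $(r,b)\neq 1$ (it can even vanish, cf.\ Proposition~\ref{1409}). The paper circumvents this by (i) splitting $\R_b=\R_b^{p,0}\times\R_b^{p,\bar 0}$ for each prime $p\mid b$, and (ii) replacing $L(b,a)$ by the pair $M^\varepsilon(b,a)=(L(b,a),K_{d(\varepsilon)})$ with a carefully chosen colored core knot so that $I^\varepsilon_{M^\varepsilon(b,a)}$ \emph{is} invertible in the component $\R_p^{p,\varepsilon}$ (Lemma~\ref{0079}). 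The general $I_{M,L}$ is then defined componentwise by dividing $I^\varepsilon_{M^\varepsilon}$ by these invertible lens-space-with-knot invariants. Your outline contains no analogue of this step.

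A secondary point: the paper does not verify Kirby invariance move by move. Instead it observes (via Proposition~\ref{main-cor}) that an element of $\R_b$ is uniquely determined by its evaluations at odd roots of unity, so once one exhibits \emph{any} $f\in\R_b$ with $\ev_\xi(f)=\tau'_{M,L}(\xi)$ for all such $\xi$, it is automatically an invariant. This replaces your Kirby-move check entirely. Your identification of the Laplace transform and Andrews' identity as the combinatorial core (Theorem~\ref{0078}) is correct for the diagonal case, but that case alone does not prove Theorem~\ref{main}.
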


If $b=1$ and $L$ is the empty link, $I_{M}$ 
coincides with Habiro's unified invariant $J_M$.

The proof of Theorem \ref{main} uses  the Laplace transform method and Andrew's identity.
We also construct a Frobenius type isomorphism to get rid of the
formal fractional power of $q$ that appeared in \cite{Le}, \cite{BL}.
As a byproduct, we generalize the deep integrality result of Habiro
(Theorem 8.2 in \cite{Ha}), underlying the construction
of the unified invariants,  to a union of an algebraically split link
with any odd colored one.

The rings $\R_b$ and $\calS_b$ have properties
 similar  to those of the Habiro ring.
An element $f(q) \in \R_b$ is totally determined by the values at many
infinite sets of roots of unity (see Section \ref{cyc}),
one special case is the following.

\begin{proposition}\label{main-cor} Let $p$ be an odd prime
not dividing $b$ and $T$ the set of all integers of the form
$p^k b'$ with $k\in \N$ and $b'$ any odd divisor of
$b^n$ for some $n$. Any element $f(q) \in \R_b$, and hence also
$\{\tau_M(\xi)\}$, is totally determined by the values at roots of
unity with orders in $T$.
\end{proposition}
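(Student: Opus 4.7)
The plan is to prove the contrapositive: if $f\in\R_b$ satisfies $f(\xi)=0$ for every root of unity $\xi$ with $\ord(\xi)\in T$, then $f=0$ in $\R_b$. By $\Z[1/b]$-linearity this is equivalent to the stated determination.

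The key mechanism is a $p$-adic interpolation at each admissible $b'$. Fix an odd divisor $b'$ of some $b^n$ and a primitive $b'$-th root of unity $\zeta_{b'}$. Since $p\nmid b$, the primitive $p^k$-th roots of unity $\zeta_{p^k}$ converge $p$-adically to $1$ in $\overline{\Q}_p$, and hence $\zeta_{p^kb'}=\zeta_{b'}\zeta_{p^k}$ converges $p$-adically to $\zeta_{b'}$. On the other hand, every residue $f\bmod(q;q^2)_m$ is represented by a polynomial with $p$-integral coefficients (because $p\nmid b$), so the evaluation $x\mapsto f(x)$ on those polynomial representatives is $p$-adically continuous. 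A Mahler/Newton forward-difference interpolation, in the style used by Habiro for $\Habiro$, then shows that the sequence $(f(\zeta_{p^kb'}))_{k\ge 1}$ $p$-adically determines the Taylor expansion of $f$ at $q=\zeta_{b'}$ in the $\Phi_{b'}(q)$-adic completion of $\Z_p[1/b][q]$. Vanishing of all those values therefore forces the Taylor expansion of $f$ at $\zeta_{b'}$ to vanish, for every odd divisor $b'$ of a power of $b$.

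To close the argument one invokes the structural property of $\R_b$ developed in Section~\ref{cyc}, the direct analogue of Habiro's identification of $\Habiro$ with a ring of ``analytic functions at roots of unity'': an element of $\R_b$ is detected by its Taylor expansion at a single convenient root, for instance at $q=1$, in the $p$-adic completion $\Z_p[1/b][[1-q]]$. Combined with the preceding paragraph, this gives $f=0$, hence the proposition.

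The main obstacle is the structural injectivity invoked in the last step: that no nonzero element of $\R_b$ becomes trivial after passing to its formal Taylor series at $q=1$ over $\Z_p[1/b]$. This is the input that turns $\R_b$ into a ring of analytic functions at roots of unity, and it is where the precise shape of $(q;q^2)_k$ and the inversion of $b$ matter — one must verify that, after inverting $b$ and $p$-adically completing, no information is lost in the passage to a single Taylor expansion. Once that is granted, the interpolation argument of the middle paragraph is routine, and the extra freedom in the choice of $b'\in B$ in the definition of $T$ simply makes the statement more flexible than the minimal version (one could in fact already conclude using only $\{p^k:k\in\N\}\subset T$).
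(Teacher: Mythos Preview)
Your argument has a genuine structural gap. The claim in your final two paragraphs --- that an element of $\R_b$ is detected by its Taylor expansion at the single point $q=1$, and that consequently one ``could in fact already conclude using only $\{p^k:k\in\N\}\subset T$'' --- is false whenever $b$ has an odd prime divisor. As shown in Section~\ref{cyc}, the ring $\R_b$ decomposes as a \emph{product}
\[
\R_b \;\simeq\; \prod_{\bn}\Z[1/b][q]^{S_\bn},\qquad S_\bn=\bp^{\bn}\N_{2b},
\]
indexed by tuples $\bn$ of exponents for the odd prime divisors of $b$. Evaluation at a root of unity of order $p^k$ (coprime to $b$) kills every factor with $\bn\neq 0$, and the Taylor expansion at $q=1$ only sees the component $\bn=0$. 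So a nonzero element supported on a factor with $\bn\neq 0$ has vanishing Taylor series at $1$ and vanishes at every $p^k$-th root of unity. The divisors $b'$ in the definition of $T$ are therefore not redundant at all: the orders $\bp^{\bn}p^k$ are precisely what is needed to probe the $\bn$-th factor.

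The paper's proof runs exactly along these lines: one projects $f$ to each factor $\Z[1/b][q]^{S_\bn}$, observes that $S_\bn$ is $\Z[1/b]$-connected, and then applies Habiro's injectivity criterion (Theorem~6.1 of \cite{Ha1}) to that factor, using that $\bp^{\bn}\in S_\bn$ is adjacent to the infinitely many elements $\bp^{\bn}p^k\in T\cap S_\bn$. Your middle paragraph, where you obtain vanishing of the Taylor expansion at each $\zeta_{b'}$, is heading in the right direction and would mesh with this decomposition; the error is in the closing step, where you discard that information and try to reduce to $b'=1$. (A smaller side remark: the assertion that $\zeta_{p^k}\to 1$ $p$-adically is not literally correct --- $|\zeta_{p^k}-1|_p=p^{-1/\varphi(p^k)}\to 1$ --- so the interpolation needs to be phrased via congruences among the $\Phi_{p^k}$ rather than convergence of roots.)
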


The Ohtsuki series \cite{Oh,Le3},
originally defined through some arithmetic
congruence property of the $SO(3)$ invariant,
 can be identified with the Taylor expansion of $I_M$ at
$q=1$ \cite{Ha, Le}. We will also investigate the Taylor expansions
of $I_M$ at roots of unity and show that these Taylor expansions satisfy
congruence relations similar to the original definition of the Ohtsuki
series, see Section \ref{Oh-strategy}.

\subsection*{Plan of the paper}
In Section \ref{defs} we  recall  known results
and definitions. In the next section we explain
the strategy of our proof of Theorem \ref{main}.
In Sections \ref{cyc} and \ref{map-frob},
we develop properties of cyclotomic completions
of polynomial rings.
New Ohtsuki series are discussed in Section \ref{Oh-strategy}.
The unified invariant of  lens spaces, needed for the diagonalization,
is defined in Section \ref{diag}.
The main technical
result
of the paper based on Andrew's identity is proved in Section \ref{laplace}.
The Appendix is devoted to the proof of the generalization
of  Habiro's integrality theorem.

\subsection*{Acknowledgments} The authors would like to thank
Kazuo Habiro and Christian Krattenthaler  for helpful
remarks and stimulating conversations.


\section{Quantum (WRT) invariants}
 \label{defs}

\subsection{Notations and conventions}
We will consider $q^{1/4}$ as a free parameter. Let
\[
\{n\} = q^{n/2}-q^{-n/2},
 \quad  \{n\}!=
\prod_{i=1}^n \{i\} ,\quad  [n] =\frac{\{n\}}{\{1\}}, \quad
\qbinom{n}{k} = \frac{\{n\}!}{\{k\}!\{n-k\}!}.
\]
We denote the set $\{1,2,3,\ldots\}$ by
$\N$.
We also use the following notation from $q$--calculus:
$$ (x;q)_n := \prod_{j=1}^n (1-x q^{j-1}).$$
Throughout this paper,
  $\xi$ will be  a primitive root
of unity of {\em odd} order $r$ and $e_n:=\exp(2\pi I/n)$.

All 3--manifolds in this paper are supposed to be  closed and
oriented. Every link in a 3--manifold is framed, oriented, and has
components ordered.

In  this paper,
 $L\sqcup L'$ denotes a framed link in $S^3$ with
disjoint sublinks $L$ and $L'$, with $m$ and $l$ components, respectively.
Surgery along the framed link $L$ transforms $(S^3,L')$ into  $(M,
L')$. We use the same notation $L'$ to denote the link in $S^3$ and
the corresponding one in $M$.


\subsection{The colored Jones polynomial}
\newcommand{\RR} {\mathbf R}
Suppose $L$ is a framed, oriented link
in $S^3$ with $m$ ordered components.
For positive integers $n_1,\dots,n_m$, called the colors of $L$, one can define
the quantum invariant $J_L(n_1,\dots,n_m)\in \Z[q^{\pm 1/4}]$,
known as the colored
Jones polynomial of $L$ (see e.g. \cite{Tu,MM}).
Let us recall here a few well--known formulas.
For the unknot $U$ with 0 framing one has
\begin{equation} J_U(n) = [n]. \label{unknot}
\end{equation}
 If $L_1$ is obtained from $L$
by increasing the framing of the $i$th component by 1, then
\begin{equation}\label{framing}
J_{L_1}(n_1,\dots,n_m) = q^{(n_i^2-1)/4} J_{L}(n_1,\dots,n_m).
\end{equation}
If all the colors $n_i$ are odd, then $J_{L}(n_1,\dots,n_m) \in \Z[q^{\pm 1}]$.


\subsection{Evaluation and Gauss sums}
For each root of unity $\xi$ of odd order $r$,
we define the evaluation map
$\ev_\xi$ by replacing $q$ with $\xi$.


Suppose
  $f(q;n_1,\dots,n_m)$ is a function
of variables $q^{\pm 1}$ and integers $n_1,\dots,n_m$.
In quantum topology, the following sum plays an important role
\[
{\sum_{n_i}}^\xi f := \sum_{\substack{0< n_i< 2r\\ n_i \text{ odd}}}
\ev_\xi  f(q; n_1,\dots, n_m)
\]
where in the sum  all the $n_i$ run over the set of {\em odd} numbers
between $0$ and $2r$.

In particular, the following  variation  of the Gauss sum
\[
\gamma_b(\xi):= {\sum_{n}}^\xi q^{b\frac{n^2-1}{4}}
\]
is well--defined, since for odd $n$, $4\mid n^2-1$.
It is known that, for odd $r$, $|\gamma_b(\xi)|$ is never 0.

\subsection{Definition of  the WRT invariant}
\newcommand{\tL}{\tilde L}
Suppose the components of $L'$ are colored by fixed integers
$j_1,\dots,j_l$.
 Let
\[
 F_{L\sqcup L'}(\xi):= {\sum_{n_i}}^\xi\;
\left \{  J_{L\sqcup L'}(n_1,\dots,n_m, j_1,\dots,j_l)\prod_{i=1}^m [n_i]\right \}.\]

An important special case is when $L=U^b$, the unknot with framing
$b \neq 0$, and $L'=\emptyset$. In that case $F_{U^{b}}(\xi)$ can be calculated
using the Gauss sum and is nonzero, see  Section \ref{diag}
below.

 Let $\sigma_+ $ (respectively $\sigma_-$) be the number of
positive (negative) eigenvalues of the linking matrix of $L$.
 Then the quantum
$SO(3)$ invariant of the pair $(M, L')$ is defined by (see e.g. \cite{KM,Tu})
\begin{equation}
\tau_{M,L'}(\xi) =
\frac{F_{L\sqcup L'}(\xi)}{(F_{U^{+1}}(\xi))^{\sigma_+}\,
(F_{U^{-1}}(\xi))^{\sigma_-} }\, .
\label{def_qi}
\end{equation}
The invariant $\tau_{M,L'}(\xi)$ is multiplicative with
respect to the connected sum.

For example, the $SO(3)$ invariant of the lens space $L(b,1)$,
obtained by surgery along $U^b$,  is
\begin{equation} \tau_{L(b,1)} (\xi)= \frac{ F_{U^b}(\xi)}{F_{U^{\sn(b)}}(\xi) },
\label{2005}
\end{equation}
were $\sn(b)$ is the sign of the integer $b$.

Let us focus on the special case when the linking matrix of
$L$ is diagonal, with $b_1, b_2, \dots, b_m$ on the diagonal.
Assume each $b_i$ is a power of a prime up to sign.
Then $H_1(M,\Z) = \oplus_{i=1}^m \Z/|b_i|$, and
$$\sigma_+ = {\rm card}\, \{ i\mid b_i >0\}, \quad \sigma_- =
{\rm card}\, \{ i \mid b_i < 0\}.$$
Thus from the definitions \eqref{def_qi}, \eqref{2005} and \eqref{0910} we have
\begin{equation}
 \tau'_{M,L'}(\xi) = \left( \prod_{i=1}^m  \tau'_{L(b_i,1)}(\xi)
 \right)\,
\frac{F_{L\sqcup L'}(\xi)}
{\prod_{i=1}^m F_{U^{b_i}}(\xi) }
\,  ,
\label{0077}
\end{equation}
with
$$\tau'_{L(b_i,1)}(\xi)= \frac{\tau_{L(b_i,1)}(\xi)}{\tau_{L(|b_i|,1)}(\xi)}\, .$$

\subsection{Habiro's cyclotomic expansion of the colored Jones
polynomial}
Recall that  $L$ and $L'$ have $m$ and $l$ components, respectively. Let us color $L'$ by fixed $\bj=(j_1,\dots,j_l)$ and vary the colors
$\bn=(n_1,\dots,n_m)$ of $L$.

For non--negative integers $n,k$ we define
$$ A(n,k) := \frac{\prod^{k}_{i=0}
\left(q^{n}+q^{-n}-q^i -q^{-i}\right)}{(1-q) \, (q^{k+1};q)_{k+1}}.$$
For $\bk=(k_1,\dots,k_m)$ let
$$ A(\bn,\bk):= \prod_{j=1}^m \; A(n_j,k_j).$$
Note that $A(\bn,\bk)=0$ if $k_j \ge n_j$ for some index $j$. Also

$$ A(n,0)= q^{-1} J_U(n)^2.$$

The colored Jones polynomial $J_{L\sqcup L'}
(\n, \bj)$, when $\bj$ is fixed, can be repackaged into the invariant $C_{L\sqcup L'}
(\bk, \bj)$ as stated in the following theorem.
\begin{Thm}\label{GeneralizedHabiro} 
 Suppose $L\sqcup L'$ is a link in $S^3$, with $L$ having zero linking matrix.
Assume the components of $L'$ have fixed {\em odd} 
colors $\bj = (j_1,\dotsm j_l)$.
Then  there are invariants
\begin{equation}\label{Jones2}
C_{L\sqcup L'}(\bk,\bj) \in \frac{(q^{k+1};q)_{k+1}}{(1-q)}
\,\,\BZ[q^{\pm 1}] ,\quad \text{where  $k=\max\{k_1,\dots, k_m\}$}
\end{equation}
such that for every $\bn =(n_1,\dots, n_m)$
\begin{equation}\label{Jones}
 J_{L\sqcup L'}
(\n, \bj)  \,  \prod^m_{i=1}\; [n_i] = \sum_{0\le k_i \le n_i-1}
C_{L\sqcup L'}(\bk,\bj)\;
  A(\bn, \bk).
\end{equation}
\end{Thm}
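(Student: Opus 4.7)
The plan is to generalize Habiro's proof of Theorem 8.2 of \cite{Ha} (which is the case $L'=\emptyset$) by treating the fixed odd colors on $L'$ as a ``scalar factor'' that does not destroy the integrality. I would proceed in three stages.

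First, I would establish the existence of an expansion of the form \eqref{Jones} with coefficients \emph{a priori} in $\BQ(q)$. For fixed $\bj$, the quantity $J_{L\sqcup L'}(\bn,\bj)\prod_i[n_i]$ is, as a function of $\bn$, a Laurent polynomial in the variables $q^{\pm n_i/2}$ invariant under each involution $n_i\leftrightarrow -n_i$. After the substitution $x_i = q^{n_i}+q^{-n_i}$, the family $\{A(\bn,\bk)\}_{\bk\ge 0}$ becomes (up to invertible normalizing factors) a triangular basis for such symmetric polynomials, so there are unique $C_{L\sqcup L'}(\bk,\bj)\in\BQ(q)$ realizing the expansion. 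The support condition $k_i\le n_i-1$ is automatic since $A(n_i,k_i)=0$ once $k_i\ge n_i$.

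Next comes the integrality of the coefficients, which is the main content. The approach I would take is to realize $L'$, colored by the fixed odd tuple $\bj$, as an element $\psi_{L',\bj}$ of the Kauffman bracket skein module of the complement of $L$; because $\bj$ is odd, the colored Jones polynomial of $L\sqcup L'$ with all colors odd takes values in $\BZ[q^{\pm 1}]$, so $\psi_{L',\bj}$ is defined over $\BZ[q^{\pm 1}]$. Applying Habiro's original cyclotomic expansion to the $L$-strands, expressed universally inside this skein module, gives an expansion into elements that, after coupling with $\psi_{L',\bj}$, produce the coefficients $C_{L\sqcup L'}(\bk,\bj)$. Since both the universal expansion and $\psi_{L',\bj}$ are defined over the appropriate ring, the claimed containment $C_{L\sqcup L'}(\bk,\bj)\in \frac{(q^{k+1};q)_{k+1}}{(1-q)}\BZ[q^{\pm 1}]$ should follow.

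The main obstacle is the precise denominator control inherited from Habiro's Theorem 8.2: one must show that cabling each component of $L$ by the universal element realizing the basis dual to $\{A(n,k)\}$ produces coefficients whose denominators are exactly $(q^{k+1};q)_{k+1}$ and no worse. In the original setting this is the delicate heart of \cite{Ha}, relying on the twist formula and the assumption that $L$ has zero linking matrix. The new difficulty introduced by $L'$ is that its components may link arbitrarily with $L$ and with themselves, and one must verify that these additional crossings do not inject further denominators. This is precisely where the oddness of $\bj$ is used: it guarantees $J_{L\sqcup L'}(\bn,\bj)\in\BZ[q^{\pm 1}]$ for all odd $\bn$, which rules out any new poles. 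The full verification, carried out in the Appendix, is what upgrades Habiro's integrality theorem to the present generality.
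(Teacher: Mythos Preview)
Your first stage (existence of the expansion with coefficients in $\BQ(q)$) is fine and matches the paper's remark that this part ``is easy to establish.''

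The integrality argument, however, has a genuine gap. Your plan is to realize $L'$ as an element $\psi_{L',\bj}$ of the Kauffman bracket skein module of the complement of $L$, then ``apply Habiro's original cyclotomic expansion to the $L$-strands, expressed universally inside this skein module.'' Two problems: (i) the inference that $\psi_{L',\bj}$ is defined over $\BZ[q^{\pm 1}]$ because the final colored Jones values lie there is a non-sequitur --- the skein module is naturally over $\BZ[q^{\pm 1/4}]$, and integrality of pairings does not descend to the element; (ii) more seriously, Habiro's proof of his Theorem~8.2 does \emph{not} proceed via a universal skein-theoretic object that could be coupled with a scalar. It goes through the universal invariant $J_T\in U_h^{\hat\otimes n}$ of a bottom tangle $T$, the crucial step being that $J_T$ lands in the \emph{even} part $(\tilde\U^0_q)^{\tilde\otimes n}$ (his Theorem~4.1), after which the center-of-$\tilde\U^0_q$ argument extracts the denominators. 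When $L'$ links $L$, this evenness fails in general, and your sketch gives no mechanism to recover it.

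The paper's approach is to upgrade precisely this evenness statement. It introduces two gradings $\gr_\ve$ and $\gr_q$ on pairs $X=T\sqcup L'$ (Equation~(A.1)) and proves (Theorem~A.3) that $J_X$ lies in $q^{\gr_q(L')/4}\bigl(\tilde\U_q^{\ve_1}\tilde\otimes\cdots\tilde\otimes\tilde\U_q^{\ve_n}\bigr)$, where $\ve_i\equiv\sum_j l_{ij}(j_j-1)\pmod 2$. The oddness of $\bj$ enters exactly here: it forces every $\ve_i=0$ and $\gr_q=0$, so $J_X\in(\tilde\U^0_q)^{\tilde\otimes n}$ (Corollary~A.4), and Habiro's partial-trace argument then applies verbatim. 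The proof of Theorem~A.3 requires a new decomposition $X=W_2W_1W_0$ (Proposition~A.5) with $W_1$ a ``good morphism,'' and an analysis (Proposition~A.6) of how good morphisms shift the $\ve$-grading; skein relations do appear, but only as a tool to reduce good morphisms to basic ones, not as a substitute for the $\U_q$-side argument. Your proposal misses this entire layer.
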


 When $L'=\emptyset$, this is
 Theorem 8.2 in \cite{Ha}.
This generalization, essentially also due to Habiro, can be proved similarly as in \cite{Ha}.
 For completeness we give
a proof
 in the Appendix. Note that the existence of $C_{L\sqcup L'}(\bk,\bj)$ as
rational functions in $q$ satisfying \eqref{Jones} is
 easy to establish. The difficulty here is to show the integrality
of \eqref{Jones2}.

 Since $A(\bn, \bk) =0$ unless $ \bk < \bn$, in the sum on the right
hand side of \eqref{Jones} one can assume that $\bk$ runs over the set
 of all $m$--tuples $\bk$ with non--negative integer components. We will use
this fact later.



\section{Strategy of the proof of the main theorem }
\label{strategy}
Here we give the proof of Theorem \ref{main} using
technical results that will be proved later.

As before, $ L\sqcup L'$ is a framed link in $S^3$ with
disjoint sublinks $L$ and $L'$, with $m$ and $l$ components, respectively.
Assume that $L'$ is colored by fixed $\bj=(j_1,\dots, j_l)$, with $j_i$'s odd.
Surgery along the framed link $L$ transforms $(S^3,L')$ into  $(M,
L')$.
 We will define $I_{M,L'}\in \cR_{b}$, such that
\begin{equation}
\tau'_{M,L'}(\xi)\;=\; \ev_{\xi}\left(I_{M,L'}\right)
\label{0080}
\end{equation}
for any root of unity $\xi$ of odd order.
This unified invariant is multiplicative with respect
to the connected sum.

The following observation is important.
By Proposition \ref{main-cor}, there is {\em at most one} element $f(q)\in \R_b$  such that for every root $\xi$ of odd order one has
$$ \tau'_{M,L} (\xi) = \ev_\xi\left( f(q)\right).$$
That is, if  we can find such an element,
 it is unique, and we put
$I_{M,L'} := f(q)$.

\subsection{Laplace transform}
The following is the main technical result of the paper. A proof will be
given in Section \ref{laplace}.
\begin{Thm} Suppose $b=\pm 1$ or $b= \pm p^l$ where $p$  is a  prime and $l$ is
positive. For  any non--negative integer $k$,
there exists an element $Q_{b,k} \in \R_b $ such that
for every root $\xi$ of odd order $r$ one has
\[
\frac{{\sum\limits_n}^\xi \, q^{b\frac{n^2-1}{4}} A(n,k) }{F_{U^b}(\xi)}
= \ev_\xi (Q_{b,k}).
\]
\label{0078}
In addition, if $b$ is odd, $Q_{b,k} \in \calS_b $.
\end{Thm}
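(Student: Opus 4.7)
The plan is to follow the Laplace transform strategy of \cite{BBL}: expand $A(n,k)$ as a Laurent polynomial in $q^{\pm n}$, evaluate the weighted Gauss sum of each monomial $q^{\alpha n}$ against $F_{U^b}(\xi)$, and assemble the resulting quotients into an element of the completion $\R_b$.

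First, the factorization $q^n + q^{-n} - q^i - q^{-i} = q^{-n}(q^n - q^i)(q^n - q^{-i})$ exhibits $A(n,k)$ in the form
\[
A(n,k) = \frac{1}{(1-q)(q^{k+1};q)_{k+1}} \sum_{|\alpha| \le k+1} c_{k,\alpha}(q)\, q^{\alpha n},
\]
with coefficients $c_{k,\alpha} \in \Z[q^{\pm 1}]$ symmetric under $\alpha \mapsto -\alpha$, reducing the theorem to computing the shifted Gauss sums $G_{b,\alpha}(\xi) := {\sum_n}^\xi q^{b(n^2-1)/4 + \alpha n}$ relative to $F_{U^b}(\xi) = G_{b,0}(\xi)$. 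When $\gcd(r,b) = 1$, completing the square and exploiting the $2r$-periodicity of the summand yields $G_{b,\alpha}(\xi)/F_{U^b}(\xi) = \xi^{-\alpha^2 b^*}$ for any integer $b^*$ with $b b^* \equiv 1 \pmod{2r}$; lifting to the completion requires only that, for each $N$, one select an integer representative compatible modulo $(q;q^2)_N$, which is possible after inverting $b$.

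The main obstacle is the degenerate case, which occurs only when $b = \pm p^l$ and $p \mid r$: then $F_{U^b}(\xi)$ no longer absorbs the shifted Gaussian and naive completion of the square fails. Here we invoke Andrews' deep $q$-series identity \cite{And} --- the same engine used for integrality in \cite{BBL, BL} --- applied directly to ${\sum_n}^\xi q^{b(n^2-1)/4} A(n,k)$. It produces a closed form that is still a $\Z[1/b][q^{\pm 1}]$-multiple of $F_{U^b}(\xi)$ and that agrees with the generic formula whenever $\gcd(r,b) = 1$. Extracting from Andrews' identity a single $q$-polynomial representative that covers both regimes, with a controlled denominator modulo $(q;q^2)_N$, is the technical heart of the proof.

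With these ingredients in hand, the cofactor polynomials stabilize modulo $(q;q^2)_N$ as $r$ varies, defining a unique $Q_{b,k} \in \R_b$ that evaluates to the desired quotient at every odd-order root of unity $\xi$; the case $b = \pm 1$ recovers Habiro's construction \cite{Ha}. For odd $b$ a closer inspection of the denominators arising both in the Gauss sum manipulations and in the application of Andrews' identity shows that they involve only factors of the form $(q;q)_\ast$ rather than $(q;q^2)_\ast$, upgrading the conclusion to $Q_{b,k} \in \calS_b$.
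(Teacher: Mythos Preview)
Your overall strategy --- Laplace transform plus Andrews' identity --- matches the paper's, but two structural ingredients are missing, and without them the argument does not go through.

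The first gap is the lift of $\xi^{-\alpha^2 b^*}$ to an element of $\R_b$. You write that ``lifting to the completion requires only that, for each $N$, one select an integer representative compatible modulo $(q;q^2)_N$, which is possible after inverting $b$.'' This is not correct: the exponent $b^*$ depends on $r=\ord(\xi)$, so there is no single Laurent polynomial in $q$ doing the job. What is needed is an element of the completion playing the role of $q^{1/b}$, i.e.\ an element whose evaluation at $\xi$ is $\xi^{b^*}$ for \emph{every} $\xi$ of order coprime to $b$. The paper constructs this in Section~\ref{map-frob} by proving that the Frobenius endomorphism $F_b:q\mapsto q^b$ of $\Z[1/b][q]^{\N_b}$ is an \emph{isomorphism} (Theorem~\ref{frob}), so that $q^{1/b}:=F_b^{-1}(q)$ exists. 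This is a genuine theorem, not a formality, and it is what makes the target of the Laplace transform well defined.

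The second gap is your handling of the degenerate case $p\mid r$. You say Andrews' identity ``produces a closed form that is still a $\Z[1/b][q^{\pm1}]$-multiple of $F_{U^b}(\xi)$'' and that one can extract ``a single $q$-polynomial representative that covers both regimes.'' The paper does not do this by finding one formula valid for all $r$; instead it uses the decomposition $\calS_p\simeq\prod_{j\ge 0}\calS_{p,j}$ of Section~\ref{cyc} and defines the Laplace transform \emph{component by component}: the element $z_{b,a}$ is specified by its projections $z_{b,a;j}$, which vanish when $p^{\min(j,l)}\nmid a$ (Subsection~\ref{defxb}). Andrews' identity is then applied separately on each component, with parameters depending on $c=(b,p^j)$ (Lemmas~\ref{S_{b;j}(k,q)} and \ref{LHS}), and two further lemmas are needed to check that the resulting expression $T_k(q,t)\cdot (t;t)_{2m}/(q;q)_{N+cm}$ actually lands in $\calS_{p,j}$. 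None of this structure is visible in your sketch, and the claim that a single polynomial representative exists and stabilizes is precisely what has to be proved.
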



\subsection{Definition of the unified invariant: diagonal case} \label{2501}
Suppose that the linking number between any two components of $L$
is 0, and  the framing on components of $L$ are $b_i=\pm p_i^{k_i}$ for
$i=1,\dots, m$, where each $p_i$ is prime or 1.
Let us denote the link $L$ with all framings switched to zero by $L_0$.

Using \eqref{Jones}, taking into account the framings $b_i$'s, we have
\[
J_{L\sqcup L'}(\bn,\bj)\prod_{i=1}^m [n_i] = \sum_{\bk\ge 0} C_{L_0 \sqcup L'}
(\bk,\bj) \, \prod_{i=1}^m q^{b_i \frac{n_i^2-1}{4}} A(n_i,k_i).
\]
By the definition of $F_{L\sqcup L'}$, we have
\[ F_{L\sqcup L'}(\xi)= \sum_{\bk \ge 0}
\ev_\xi(C_{L_0 \sqcup L'}(\bk,\bj)) \, \prod_{i=1}^m
{\sum_{n_i}}^\xi \,  q^{b_i \frac{n_i^2-1}{4}} A(n_i,k_i).
\]
From \eqref{0077} and  Theorem \ref{0078}, we get
\[
\tau'_{M,L'}(\xi) = \ev_\xi \left \{  \prod_{i=1}^m  I_{L(b_i,1)} \,
 \sum_{\bk} C_{L_0 \sqcup L'}(\bk,\bj) \, \prod_{i=1}^m  Q_{b_i,k_i} \right \},
\]
where the unified invariant of the lens space $I_{L(b_i,1)}\in \R_b$,
with $\ev_\xi(I_{L(b_i,1)})=\tau'_{L(b_i,1)}(\xi)$, exists
 by  Lemma \ref{0079} below.
Thus if we define
\[
I_{(M,L')}:= \prod_{i=1}^m  I_{L(b_i,1)} \, \sum_{\bk}
C_{L_0 \sqcup L'}(\bk,\bj) \, \prod_{i=1}^m  Q_{b_i,k_i}\, ,
\]
then \eqref{0080} is satisfied. By Theorem \ref{GeneralizedHabiro},
$C_{L_0 \sqcup L'}(\bk,\bj)$ is divisible by $(q^{k+1};q)_{k+1}/(1-q)$, which is
divisible by $(q;q)_k$, where $k = \max k_i$. It follows that
$I_{(M,L')} \in \R_b$. In addition, if $b$ is odd,
then $I_{(M,L')} \in \calS_b$.

\subsection{Diagonalization using lens spaces} The general
case reduces to the diagonal case by
the well--known trick of diagonalization using lens spaces. We say that
$M$ is {\em diagonal} if it can be obtained from $S^3$ by surgery along a framed link
$L$  with diagonal linking matrix, where  the diagonal entries are of the
form $\pm p^k$ with $p=0,1$ or a prime.
The following lemma was proved in \cite[Proposition 3.2 (a)]{Le}.

\begin{lemma} For every rational homology sphere $M$,
 there are lens spaces $L(b_i,a_i)$ such that the connected
sum of $M$ and these
lens spaces is diagonal. Moreover, each $b_i$ is a prime power
 divisor of $|H_1(M,\Z)|$.
\label{diagonalization}
\end{lemma}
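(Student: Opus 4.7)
The plan is to reduce to the diagonal case in two stages: first, to arrange a convenient algebraic structure on the torsion linking form via connected sum with lens spaces, and second, to realize this structure by a Kirby diagram with diagonal linking matrix of the required form.

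Starting point: present $M$ as surgery on a framed link $L\subset S^3$ with symmetric integer linking matrix $A$. Since $M$ is a rational homology sphere, $A$ is invertible and $|\det A|=|H_1(M,\Z)|$; moreover, the torsion linking form $\lambda_M\colon H_1(M,\Z)\times H_1(M,\Z)\to \Q/\Z$ is represented by $A^{-1}\bmod 1$. By the structure theorem, $H_1(M,\Z)=\bigoplus_{i=1}^s \Z/c_i$ with each $c_i$ a prime power dividing $|H_1(M,\Z)|$. Correspondingly, the classification of linking forms on finite abelian groups (Wall, Kawauchi--Kojima) gives a decomposition of $\lambda_M$ into primary blocks, and on each odd-$p$--primary part the blocks can be taken to be rank one: $\langle a_{ij}/p^{k_{ij}}\rangle$; a standard extra argument handles the indecomposable rank-$2$ pieces possibly arising at $p=2$.

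The next step is to cancel the linking form by connect summing. For each cyclic summand $\Z/c_i$ on which $\lambda_M$ restricts to $\langle a_i/c_i\rangle$, I form
\[
M':=M\# L(c_1,-a_1)\#\cdots\# L(c_s,-a_s).
\]
Since $L(c,-a)$ has $H_1=\Z/c$ with linking form $\langle -a/c\rangle$, the manifold $M'$ satisfies $\lambda_{M'}=\lambda_M\oplus(-\lambda_M)$, which is hyperbolic (trivial in the Witt group). Each $c_i$ used is a prime power divisor of $|H_1(M,\Z)|$, as the lemma requires.

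Finally, I realize $M'$ by a surgery diagram with diagonal linking matrix having entries of the form $\pm p^k$. Choose any surgery presentation of $M'$; its linking matrix $A'$ is a nondegenerate symmetric integer form whose discriminant form is the hyperbolic $\lambda_{M'}$. Two such presentations differ by handle slides ($GL_n(\Z)$-congruence of $A'$) and stabilization by $\pm 1$-framed unknots (which adds $\langle\pm 1\rangle$ summands and does not alter the 3-manifold). Using the well-known fact that a metabolic integer symmetric bilinear form becomes, after enough $\langle\pm 1\rangle$ stabilizations, $GL(\Z)$-congruent to a diagonal form, I obtain a presentation with diagonal linking matrix. To guarantee that every diagonal entry is of the form $\pm p^k$, I use the identity that, up to adding $\langle\pm 1\rangle$ summands, a rank-one block $\langle n\rangle$ with $n=\pm p_1^{a_1}\cdots p_r^{a_r}$ is $GL(\Z)$-congruent to $\langle\pm p_1^{a_1}\rangle\oplus\cdots\oplus\langle\pm p_r^{a_r}\rangle$ with signs adjusted to keep the signature correct. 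Every such algebraic move is realized by handle slides and $\pm 1$-framed stabilizations, which do not change $M'$.

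The main obstacle I expect is precisely this last step: translating the algebraic splittings coming from the classification of integer symmetric bilinear forms into concrete sequences of Kirby moves, while controlling the $2$-primary behavior of $\lambda_M$. The cleanest route is to verify, using the Wall classification and stabilization, that the diagonal form with prime-power entries is uniquely determined (up to reordering and $\langle\pm 1\rangle$ summands) by the signature and the primary decomposition of $\lambda_{M'}$, so that the hyperbolic structure produced in Stage 2 is automatically realized in this canonical form after sufficiently many handle slides.
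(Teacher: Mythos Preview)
The paper does not prove this lemma; it simply cites \cite[Proposition~3.2(a)]{Le}. The argument there is more direct than yours: one first invokes the known fact (Ohtsuki, Murakami) that any rational homology sphere admits a surgery presentation on an \emph{algebraically split} link, so the linking matrix is already diagonal; then one shows that a diagonal entry $d$ that is not a prime power can be traded, at the cost of connect-summing with a single lens space $L(d,a)$ for a suitable $a$, for two diagonal entries with strictly fewer prime factors, and one iterates.

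Your approach has a genuine gap in the final step. The claim that a rank-one block $\langle n\rangle$ becomes $GL(\Z)$-congruent, after $\langle\pm 1\rangle$-stabilization, to $\langle\pm p_1^{a_1}\rangle\oplus\cdots\oplus\langle\pm p_r^{a_r}\rangle$ is false. Stable congruence of nondegenerate integral forms requires equality of the discriminant (linking) forms, and the $p_j$-primary part of the discriminant form of $\langle n\rangle$ is $\langle u_j/p_j^{a_j}\rangle$ with $u_j\equiv (n/p_j^{a_j})^{-1}\pmod{p_j^{a_j}}$; there is no reason for $u_j$ to be $\pm 1$ up to squares. Concretely, $\langle 15\rangle$ has $5$-primary linking form $\langle 3/5\rangle\cong\langle 2/5\rangle$, and since $-1$ is a square mod $5$ while $2$ is not, this is not isomorphic to $\langle\pm 1/5\rangle$; thus $\langle 15\rangle$ is not stably congruent to any $\langle\pm 3\rangle\oplus\langle\pm 5\rangle$. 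This obstruction is precisely why the lemma allows lens spaces $L(b_i,a_i)$ with arbitrary $a_i$: the parameter $a_i$ absorbs the non-square units in the linking form. Your hyperbolic-cancellation stage does not rescue this, since for odd $p$ the hyperbolic form on $(\Z/p^k)^2$ diagonalizes as $\langle u/p^k\rangle\oplus\langle -u/p^k\rangle$ with $u$ again only determined up to squares, so you still cannot force the integral diagonal entries to be $\pm p^k$.
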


To define  the unified invariant for a general rational homology sphere $M$,
one first adds to $M$ lens spaces to get a diagonal $M'$, for which
the unified invariant $I_{M'}$ had been defined in Subsection \ref{2501}. Then
  $I_M$ is the quotient of $I_{M'}$ by the unified invariants of the lens
 spaces. But unlike the simpler
 case
of \cite{Le}, the unified invariant of lens spaces are {\em not} invertible
 in general.
To overcome this difficulty we insert knots in lens spaces and split the
 unified invariant into different components. This
 will be explained in the remaining part of this section.

\subsection{Splitting of the invariant} Suppose $p$ is a prime divisor of $b$,
 then it's clear that $\R_p \subset \R_b$.

In Section \ref{cyc} we will see that there is a decomposition

\newcommand{\bz} {{\bar 0}}
\[
\R_b = \R_{b}^{p,0} \times \R_{b}^{p,\bz},
\]
with canonical projections $\pi^p_0: \R_b \to \R_{b}^{p,0}$ and
$\pi^p_\bz:\R_b \to \R_{b}^{p,\bz}$. If $f\in \R_{b}^{p,0}$ then $\ev_\xi(f)$
 can be defined when the order of $\xi$ is coprime with $p$; and in this case
$ \ev_\xi(g) = \ev_\xi(\pi^p_0(g))$
for every $g\in \R_b$.

 On the other hand, if $f\in \R_{b}^{p,\bz}$ then $\ev_\xi(f)$
can be defined when the order
 of $\xi$ is divisible by $p$, and one has $ \ev_\xi(g) = \ev_\xi(\pi^p_\bz(g))$
for every $g\in \R_b$.

It also follows from the definition that $\R_p^{p,\ve}
\subset \R_b^{p,\ve}$ for $\ve = 0$ or $\bz$.

For $\calS_b$, there exists a
completely analogous decomposition.
For any odd divisor $p$ of $b$,
an element $ x \in \R_b$ (or $\calS_b$)
determines and is totally determined  by the
 pair $(\pi^p_0(x), \pi^p_\bz(x))$. If $p=2$ divides $b$, then for any
$x\in \R_b$, $x=\pi^p_0(x)$.

Hence, to define $I_M$ it is enough to fix
$I^0_M= \pi^p_0(I_M)$ and $I^\bz_M=\pi^p_\bz(I_M)$. The first
part $I^0_M= \pi^p_0(I_M)$, when $b=p$,
was  defined in \cite{Le} (up to normalization), where the
third author considered the case
when the order of roots of unity is coprime with $b$. We will give a
self--contained definition of $I^0_M$,
and show that it is coincident (up to normalization) with the one introduced in \cite{Le}.

\subsection{Lens spaces}

Suppose $b,a,d$ are integers with $(b,a)=1$ and $b\neq 0$. Let $M(b,a;d)$ be the pair
 of a lens space $L(b,a)$ and a knot $K\subset L(b,a)$, colored by
$d$, as described in
Figure \ref{figure:LensSpaceKnot}. \begin{figure}[htbp]
\begin{center}
\input{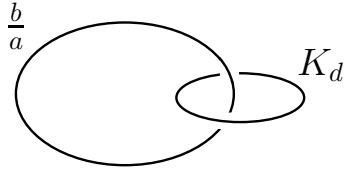}
\caption{The lens space  $(L(b,a),K_d)$ is obtained by
$b/a$ surgery on the first component of the Hopf link, the second
component is the knot $K$ colored by $d$.}
\label{figure:LensSpaceKnot}
\end{center}
\end{figure}
Among these pairs we want to single out some
whose quantum invariants are invertible.

For $\ve\in \{0,\bz\}$, let $M^\ve(b,a) := M(b,a;d(\ve))$,
where $d(0):=1$ and $d(\bz)$ is the smallest odd positive
integer such that $|a|d(\bz) \equiv 1 \pmod {b}$. Note that if $|a|=1$,
$d(0)=d(\bz)=1$.

It is known that if the color of a
link component is $1$, then the component can be removed from the link
without  affecting the value of quantum invariants. Hence
$$ \tau_{M(b,a;1)} = \tau_{L(b,a)}.$$

\begin{lemma}  Suppose $b=\pm p^{l}$ is a prime power.
For $\ve \in \{0,\bz\}$, there exists an invertible
invariant $I^\ve_{M^\ve(b,a)} \in \R^{p,\ve}_p$ such that
$$\tau'_{M^\ve(b,a)}(\xi)\;=\; \ev_{\xi}\left(I^\ve_{M^\ve(b,a)}\right)$$
where $\ve=0$ if the order of $\xi$ is not divisible by $p$,
and $\ve=\bz$ otherwise.
Moreover, if $p$ is odd, then
$I^\ve_{M^\ve(b,a)}$ belongs to and is  invertible in $\calS^{p,\ve}_p$.
\label{0079}
\end{lemma}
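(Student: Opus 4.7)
The strategy is to compute $\tau_{M^\ve(b,a)}(\xi)$ explicitly from a surgery presentation of the pair $(L(b,a), K_{d(\ve)})$, apply Habiro's cyclotomic expansion (Theorem~\ref{GeneralizedHabiro}) to the colored Jones polynomial of the underlying link, invoke the Laplace transform (Theorem~\ref{0078}) on the resulting Gauss sums, and finally project to $\R^{p,\ve}_p$ and verify invertibility.

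The case $\ve=0$ is essentially contained in the third author's work \cite{Le}: since $d(0)=1$, the knot $K$ is erasable, so $\tau'_{M^0(b,a)}=\tau_{L(b,a)}/\tau_{L(b,1)}$, and an invertible lift to the subring of $\R_p$ indexed by roots of order coprime to $p$ has already been constructed there. I would import that construction and identify it with $I^0_{M^0(b,a)} \in \R^{p,0}_p$ via the projection $\pi^p_0$.

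For the main case $\ve=\bz$, I would represent $(L(b,a), K_{d(\bz)})$ by a surgery presentation in which all framings are prime powers, so that Theorem~\ref{0078} applies. Starting from the Hopf-link picture of Figure~\ref{figure:LensSpaceKnot}, a slam-dunk / $SL(2,\Z)$ manipulation converts the rational $b/a$-surgery on the first component into integer $b$-surgery, with the ``$a$-twist'' absorbed into the second component via the coloring $K_{d(\bz)}$. Substituting into \eqref{def_qi} and expanding via Theorem~\ref{GeneralizedHabiro}, one obtains
\[
F_{L \sqcup K_{d(\bz)}}(\xi) \;=\; \sum_{k\ge 0} \ev_\xi\bigl(C(k;d(\bz))\bigr)\; {\sum_n}^{\xi}\, q^{b(n^2-1)/4}\, A(n,k),
\]
with $C(k;d(\bz))$ an element of $\bigl((q^{k+1};q)_{k+1}/(1-q)\bigr)\,\Z[q^{\pm 1}]$. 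Applying Theorem~\ref{0078} to each inner sum yields a formal series $\sum_k C(k;d(\bz))\, Q_{b,k} \in \R_p$ (and in $\calS_p$ for odd $p$), whose projection $\pi^p_\bz$ agrees with $\tau'_{M^\bz(b,a)}(\xi)$ at every odd $\xi$ of order divisible by $p$; the uniqueness noted at the start of Section~\ref{strategy} then forces this projection to equal $I^\bz_{M^\bz(b,a)}$.

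The main obstacle is invertibility. The congruence $|a|\,d(\bz) \equiv 1 \pmod{b}$ is tailor-made for this: after the slam-dunk reformulation, the factor $[n\,d(\bz)]$ coming from the Hopf-link colored Jones combines with $q^{b(n^2-1)/4}$ so that, upon completing the square in the Gauss sum, the numerator reproduces $F_{U^{\sn(b)}}$ up to a unit, making the $k=0$ coefficient of the series a unit in $\R^{p,\bz}_p$. The higher-order corrections lie in the maximal ideal of the completion, so invertibility of $I^\bz_{M^\bz(b,a)}$ follows; it descends to $\calS^{p,\bz}_p$ for odd $p$ via the natural embedding $\calS_p \hookrightarrow \R_p$. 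Without the specific choice of $d(\bz)$, the numerator could vanish on pieces of $\R^{p,\bz}_p$ and the inverse would fail to exist in the completion; this is why the choice of $d(\ve)$ is the crucial input of the lemma.
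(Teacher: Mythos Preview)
Your approach differs substantially from the paper's, and it contains a genuine gap.

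The paper does \emph{not} route the lens-space case through the general machinery of Theorems~\ref{GeneralizedHabiro} and~\ref{0078}. Instead it computes $\tau'_{M(b,a;d)}(\xi)$ in closed form (Proposition~\ref{1409}) via the Hopf-chain presentation and explicit Gauss sums, obtaining
\[
\tau'_{M(b,a;d)}(\xi)\;=\;\pm\left(\tfrac{|a|}{c}\right)\,\xi^{N}
\]
for an explicit integer exponent $N$ built from Dedekind sums, once $c=(b,r)$ divides $\hat a-\sn(a)d$. The choices $d(0)=1$ and $|a|\,d(\bz)\equiv 1\pmod b$ are made precisely so that this divisibility holds and the answer is a pure monomial in $\xi$. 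The lift $I^\ve_{M^\ve(b,a)}$ is then \emph{defined} as the corresponding monomial in $q$ (using the $b$th root of $q$ in $\calS_{p,0}$ provided by the Frobenius isomorphism of Section~\ref{map-frob}); invertibility is immediate because $q$ and $q^{1/b}$ are units. The case $\ve=0$ is handled directly in the same way, not imported from \cite{Le}.

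Your proposal has two problems. First, the ``slam-dunk / $SL(2,\Z)$'' step is not a valid Kirby move as stated: there is no move that converts $b/a$-surgery on a single component into integer $b$-surgery while merely adjusting a color on the companion knot. The standard continued-fraction resolution produces a Hopf chain with integer framings $m_1,\dots,m_n$, and these are \emph{not} prime powers in general, so Theorem~\ref{0078} cannot be applied to them. Second, even granting a series representation $\sum_k C(k;d)\,Q_{b,k}$, your invertibility argument is too loose: $\R^{p,\bz}_p$ is an infinite product $\prod_{j\ge 1}\Z[1/p][q]^{p^j\N_p}$, not a local ring, and ``the $k=0$ term is a unit while the rest lies in the maximal ideal'' must be verified factor by factor. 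The paper sidesteps both issues entirely by recognising that for these specific pairs the WRT invariant is already a single monomial, so no series, no Laplace transform, and no delicate unit analysis are needed.
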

A proof of Lemma \ref{0079} will be given in Section \ref{diag}.

\subsection{Definition of the unified invariant: general case}
Now suppose $(M,L')$ is an arbitrary pair of a rational homology 3--sphere
 with a link
$L'$ in it colored by odd numbers $j_1,\dots, j_l$.
Let $L(b_i,a_i)$ for  $i=1,\dots, m$  be the
lens spaces of Lemma \ref{diagonalization}.
We use induction on $m$. If $m=0$, then $M$ is diagonal and $I_{M,L'}$
 has been defined in Subsection \ref{2501}.

Since $(M,L') \# M(b_1,a_1;d)$ becomes diagonal after adding
$m-1$ lens spaces,
the unified invariant
 of $(M,L') \# M(b_1,a_1;d)$ can be defined
by  induction, for any odd integer $d$.
In particular, one can define $I_{M^\ve}$, where
$M^\ve := (M,L') \# M^\ve(b_1,a_1)$. Here $\ve = 0$ or $\ve =\bz$ and
 $b_1$ is a power of a prime $p$ dividing $b$.
It follows that  the components $\pi^p_\ve(I_{M^\ve}) \in \R^{p,\ve}_b$  are
 defined.

By Lemma \ref{0079}, $I^\ve_{M^\ve(b_1,a_1)}$ is
defined and  invertible. Now we put
$$ I^\ve_{M,L'} := I^\ve_{M^\ve} \cdot  (I^\ve_{M^\ve(b_1,a_1)})^{-1}.$$
It is easy to see that $I_{M,L'}:= (I^0_{M,L'}, I^\bz_{M,L'})$
satisfies \eqref{0080}.
This completes the construction of $I_{M,L'}$.
It remains to prove Lemma \ref{0079}
and Theorem \ref{0078}.



\section{Cyclotomic completions of polynomial rings}\label{cyc}
In this section we adapt the results of Habiro on cyclotomic completions of polynomial rings \cite{Ha1} to our rings.
\subsection{On cyclotomic polynomial}
Recall that $e_n := \exp(2\pi I/n)$ and denote by
$\Phi_n(q)$ the cyclotomic polynomial
\[
\Phi_n(q) = \prod_{\substack{(j,n)=1\\0<j<n}} (q - e_n^j).
\]
The degree of $\Phi_n(q)\in \Z[q]$ is given by the Euler function $\varphi(n)$.
Suppose $p$ is a prime and $n$ an integer. Then (see e.g. \cite{Na})
\begin{equation} \Phi_n(q^p)= \begin{cases}    \Phi_{np}(q)  & \text{ if } p \mid n \\
\Phi_{np}(q) \Phi_n(q)  & \text{ if } p \nmid n.
\end{cases}
\end{equation}
It follows that  $\Phi_n(q^p)$ is always divisible by $\Phi_{np}(q)$.

The ideal of $\Z[q]$ generated by $\Phi_n(q)$ and $\Phi_m(q)$ is well--known,
see e.g. \cite[Lemma 5.4]{Le}:

\begin{lemma}$\text{ }$
\begin{itemize}
\item[(a)] If $\frac{m}{n} \neq p^e$ for any  prime $p$ and any integer $e\neq 0$, then
$(\Phi_n)+ (\Phi_m)=(1)$ in $\Z[q]$.

\item[(b)] If $\frac{m}{n} = p^e$ for a prime $p$ and some integer $e \neq 0$, then $(\Phi_n)+ (\Phi_m)=(1)$ in $\Z[1/p][q]$.
\label{0911}
\end{itemize}
\end{lemma}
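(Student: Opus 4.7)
The plan is to reduce both parts to a computation of the single integer $R := \mathrm{Res}(\Phi_n, \Phi_m) \in \Z$. By general properties of the resultant (Sylvester matrix, or equivalently the extended Euclidean algorithm run over $\Z[q]$) there exist explicit $A, B \in \Z[q]$ with $A\Phi_n + B\Phi_m = R$; thus the integer $R$ already lies in the ideal $(\Phi_n) + (\Phi_m) \subset \Z[q]$. It therefore suffices to show that $R = \pm 1$ under the hypothesis of (a) and that $R$ is $\pm p^k$ for some $k \ge 0$ under the hypothesis of (b): a unit of $\Z$ yields the unit ideal in $\Z[q]$, while a power of $p$ becomes a unit after inverting $p$.

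The resultant is computed classically via
$$R \;=\; \prod_{\zeta\colon \Phi_n(\zeta)=0} \Phi_m(\zeta) \;=\; N_{\Q(\zeta_n)/\Q}\bigl(\Phi_m(\zeta_n)\bigr).$$
Expanding $\Phi_m(q) = \prod_{d\mid m}(q^d-1)^{\mu(m/d)}$, taking norms, and invoking the standard identity that $N_{\Q(\zeta_N)/\Q}(\zeta_N - 1)$ equals $p$ when $N$ is a positive power of a prime $p$ and equals $1$ otherwise, one extracts after a short M\"obius bookkeeping Apostol's closed form
$$R \;=\; \begin{cases} p^{\varphi(n)} & \text{if } m = np^e \text{ with } e \ge 1,\\ p^{\varphi(m)} & \text{if } n = mp^e \text{ with } e \ge 1,\\ 1 & \text{if } m \neq n \text{ and } m/n \text{ is not a prime power in } \Q^{\times}. \end{cases}$$
Case (a) of the lemma lands in the third branch and case (b) in one of the first two, so both statements drop out.

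An alternative route that stays inside $\Z[q]$ throughout is induction on the number of prime factors (counted with sign) of the rational $m/n$, driven by the identity $\Phi_n(q^p) = \Phi_{np}(q)\cdot \Phi_n(q)^{[p \nmid n]}$ recalled just before the lemma; at each step one either clears a prime different from the allowed one (keeping the coefficient ring $\Z[q]$) or absorbs the allowed prime (forcing the passage to $\Z[1/p][q]$). Whichever route one takes, the main obstacle is exactly the M\"obius bookkeeping (respectively, the verification that each inductive step contributes the expected $p$-power); the passage from ``$R$ is a unit in the relevant ring'' to ``$(\Phi_n)+(\Phi_m) = (1)$'' is purely formal.
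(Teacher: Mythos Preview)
Your argument via the resultant is correct and is in fact the standard proof of this classical fact (the closed form for $\mathrm{Res}(\Phi_m,\Phi_n)$ is due to Apostol). The key point you identify is the right one: the resultant lies in the ideal $(\Phi_n)+(\Phi_m)\subset\Z[q]$, so the computation of its value as $1$ or a power of $p$ immediately gives both parts.

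Note that the paper itself does not prove this lemma; it simply quotes it as well--known and refers to \cite[Lemma 5.4]{Le}. So there is no ``paper's own proof'' to compare against, and your resultant computation is exactly the kind of argument one would expect to find behind such a citation. Your alternative inductive route using $\Phi_n(q^p)=\Phi_{np}(q)\Phi_n(q)^{[p\nmid n]}$ is also viable and is closer in spirit to how such statements are sometimes proved directly, but the resultant approach is cleaner and requires no case analysis beyond Apostol's formula.
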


Note that in a commutative ring $R$,  $(x) + (y) =(1)$ if and only if
$x$ is invertible in $R/(y)$. Also $(x) + (y) =(1)$ implies $(x^k) +
(y^l) =(1)$ for any integers $k,l \ge 1$.

%


\subsection{Habiro's results}  Let us summarize some of Habiro's
results on cyclotomic completions of polynomial rings \cite{Ha1}.
Let  $R$ be a commutative integral domain of characteristic zero
and $R[q]$  the polynomial
ring over $R$.
For any
 $S\subset \N$, Habiro defined  the $S$--cyclotomic completion
 ring $R[q]^S$ as follows:
\be\label{rs} R[q]^S:=\lim_{\overleftarrow{f(q)\in
\Phi^*_S}} \;\;\frac{R[q]}{(f(q))} \ee
 where $\Phi^*_S$ denotes the multiplicative
set in $\Z[q]$ generated by $\Phi_S=\{\Phi_n(q)\mid n\in S\}$
  and directed with respect to the
divisibility relation.

For example, since the sequence $(q;q)_n$, $n\in \N$,
 is cofinal to $\Phi^*_\N$, we have
\be\label{cofinal}
\Habiro\simeq\Z[q]^\N.
\ee

 Note that if $S$ is finite, then
$R[q]^S$ is identified with the $(\prod \Phi_S)$--adic completion of $R[q]$.
In particular,
\[
R[q]^{\{1\}}\simeq R[[q-1]], \quad
R[q]^{\{2\}}\simeq R[[q+1]].
\]
Suppose $S' \subset S$, then $\Phi^*_{S'}\subset \Phi^*_S$, hence
there is a natural map
\[
\rho^R_{S, S'}: R[q]^S \to R[q]^{S'}.
\]

Recall important results concerning  $R[q]^S$ from \cite{Ha1}. Two
positive integers $n, n'$ are called {\em adjacent} if
$n'/n=p^e$ with a nonzero
$e\in \Z$ and  a  prime $p$, such that
 the ring $R$ is $p$--adically separated, i.e. $\bigcap_{n=1}^\infty (p^n) =0$ in $R$.
A set of positive integers is
{\em $R$--connected} if for any two distinct elements $n,n'$ there is a
sequence $n=n_1, \,n_2, \dots,\, n_{k-1},\, n_k= n'$ in the set, such that
any two consecutive numbers of this sequence are adjacent.
Theorem 4.1 of \cite{Ha1}
says that if $S$ is $R$--connected,
 then for any subset $S'\subset S$
the natural map $ \rho^R_{S,S'}: R[q]^S \hookrightarrow R[q]^{S'}$ is an
embedding.

If $\zeta$ is a root of unity of order in $S$, then for every $f(q)\in R[q]^S$
the evaluation $\ev_\zeta(f(q))\in R[\zeta]$ can be defined by sending
$q\to\zeta$.
For a set $\Xi$ of roots of unity whose orders form a subset
$\cT\subset S$, one defines the evaluation
\[
\ev_\Xi: R[q]^S \to \prod_{\zeta \in \Xi} R[\zeta].
\]
Theorem 6.1 of \cite{Ha1} shows that if
$R\subset \Q$, $S$ is $R$--connected, and there
exists $n\in S$ that is adjacent to infinitely many elements in
$\cT$, then $\ev_\Xi$ is injective.


\subsection{Taylor expansion} Fix a natural number $n$, then we have
$$ R[q]^{\{n\}} =
\lim_{\overleftarrow{\hspace{2mm}k\hspace{2mm}}}
\frac{R[q]}{(\Phi^k_n(q))}\; .$$
Suppose $ \Z \subset R \subset \Q$, then the natural algebra homomorphism

$$ h: \frac{R[q]}{(\Phi^k_n(q))} \to \frac{R[e_n][q]}{((q-e_n)^k)}$$
is injective, by Proposition \ref{h_kInjektive} below. Taking the inverse limit, we see that there is a natural injective algebra homomorphism

$$ h : R[q]^{\{n\}} \to R[e_n][[q-e_n]].$$

Suppose $n \in S$. Combining $h$ and $\rho_{S, \{n\}}: R[q]^S \to R[q]^{\{n\}}$, we get an algebra map

\newcommand{\TT}{{\mathfrak t}}

$$ \TT_n: R[q]^S \to R[e_n][[q-e_n]].$$
If $f\in R[q]^S$, then $\TT_n(f)$ is called the Taylor expansion of $f$ at $e_n$.

\subsection{Splitting of $\calS_p$ and evaluation}
For every integer $a$, we put $\N_a := \{ n \in \N \mid (a,n)=1\}$.

Suppose $p$ is a prime. Analogously to \eqref{cofinal},
we have
$$\calS_p \simeq \Z[1/p][q]^\N\,.$$

Observe that $\N$ is not $\Z[1/p]$--connected. In fact one has
 $\N =\amalg_{j=0}^\infty \; p^j \N_p$, where each $p^j\N_p$ is
$\Z[1/p]$--connected.
Let us define
\[
\calS_{p,j}:= \Z[1/p][q]^{p^j \N_p}.
\]
Note that for every $f \in \calS_p$, the evaluation $\ev_\xi(f)$ can be
 defined for every root $\xi$ of unity. For $f\in \calS_{p,j}$, the evaluation
$\ev_\xi(f)$ can be defined  when $\xi$ is a root of unity of order
in $p^j\N_p$.

\begin{proposition} For every prime $p$ one has
\begin{equation} \calS_p\simeq \prod_{j=0}^\infty \calS_{p,j}.
\label{0912}\end{equation}

\end{proposition}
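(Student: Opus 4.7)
The approach is to apply the Chinese Remainder Theorem levelwise in the inverse limit defining $\calS_p$, using the partition of $\N$ into $\Z[1/p]$-connected components. First I would record the set-theoretic decomposition $\N = \bigsqcup_{j=0}^\infty p^j\N_p$ and note that every $f \in \Phi^*_\N$ factors uniquely (up to order) as $f = \prod_j f_j$ with $f_j \in \Phi^*_{p^j\N_p}$, all but finitely many of the $f_j$ being equal to $1$.

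The main technical step is to prove the pairwise coprimality
$$(\Phi_n) + (\Phi_m) = (1) \quad \text{in } \Z[1/p][q]$$
whenever $n\in p^j\N_p$ and $m\in p^{j'}\N_p$ with $j\neq j'$. Writing $n=p^j a$ and $m=p^{j'}b$ with $(ab,p)=1$, I would split into two cases. If $a=b$ then $m/n = p^{j'-j}$ is a nonzero power of $p$, and Lemma \ref{0911}(b) with prime $p$ gives the claim directly over $\Z[1/p][q]$. If $a\neq b$, reduce $b/a$ to lowest terms $b'/a'$; since $\gcd(a'b',p)=1$ and $j'-j\neq 0$, a $p$-adic valuation argument shows $m/n = p^{j'-j}b'/a'$ cannot equal $r^e$ for any prime $r$ and nonzero exponent $e$ (neither $r=p$ nor $r\neq p$ is consistent with the $p$-valuation and the coprimality of $a',b'$). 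Hence Lemma \ref{0911}(a) applies and yields coprimality already in $\Z[q]$. In either case, the coprimality immediately upgrades to $(\Phi_n^k)+(\Phi_m^\ell)=(1)$ for all $k,\ell \ge 1$, so for $f = \prod_j f_j$ as above with only finitely many $f_j \ne 1$, the ideals $(f_j)$ are pairwise coprime in $\Z[1/p][q]$.

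With coprimality in hand, the Chinese Remainder Theorem produces a natural ring isomorphism
$$\Z[1/p][q]/(f) \;\cong\; \prod_{j} \Z[1/p][q]/(f_j)$$
for every $f \in \Phi^*_\N$. Passing to the inverse limit over the directed set $\Phi^*_\N$ and observing that the projection of this system to the $j$-th factor is cofinal with the system $\Phi^*_{p^j\N_p}$, I would identify
$$\calS_p \;=\; \lim_{\overleftarrow{f}}\,\Z[1/p][q]/(f) \;\cong\; \prod_{j=0}^\infty \lim_{\overleftarrow{f_j}} \Z[1/p][q]/(f_j) \;=\; \prod_{j=0}^\infty \calS_{p,j},$$
which is exactly \eqref{0912}.

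The main (and essentially only) obstacle is the case analysis for the coprimality claim, which is routine once one tracks the $p$-adic valuation carefully; the CRT and inverse-limit step is then standard, and is the specialization to our situation of Habiro's splitting of cyclotomic completions along $R$-connected components in \cite{Ha1}. A mild subtlety is that the product on the right of \eqref{0912} is infinite, but this is harmless because each individual $f \in \Phi^*_\N$ has support meeting only finitely many of the $p^j\N_p$, so on each finite level of the inverse limit only finitely many factors contribute.
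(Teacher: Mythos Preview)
Your proposal is correct and follows essentially the same approach as the paper: both arguments establish that $(\Phi_n^{k})+(\Phi_m^{\ell})=(1)$ in $\Z[1/p][q]$ whenever $n,m$ lie in different strata $p^j\N_p$ (via the dichotomy that $n/m$ is either a nonzero power of $p$ or not a prime power at all, invoking the two parts of Lemma~\ref{0911}), then apply the Chinese Remainder Theorem levelwise and pass to the inverse limit. Your case analysis for coprimality is a bit more explicit than the paper's one-line assertion, but the underlying argument is identical.
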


\begin{proof}
Suppose $n_i \in p^{j_i}\N_p$ for $ i=1,\dots, m$, with  distinct $j_i$'s.
Then $n_{i}/n_{s}$, with $i \neq s$, is either not a power of a prime or
 a non--zero power of $p$, hence
 by
Lemma \ref{0911} (and the remark right after Lemma \ref{0911}), for
 any positive integers $k_1,\dots, k_m$, we have
$$(\Phi_{n_i}^{k_i})+ (\Phi_{n_s}^{k_s})=(1) \quad \text{ in }\Z[1/p][q].$$
By the Chinese remainder theorem, we have
$$\frac{ \Z[1/p][q]}{\left(\prod_{i=1}^m \Phi_{n_i}^{k_i}\right)}\;\simeq\;
 \prod_{i=1}^m
\frac{\Z[1/p][q]}{\left(\Phi_{n_i}^{k_i}\right)} .
$$
Taking the inverse limit, we get \eqref{0912}.
\end{proof}

Let $\pi_j: \calS_p \to \calS_{p,j}$ denote the projection onto the
$j$th component in the above decomposition.

\begin{lemma} Suppose $\xi$ is a root of unity of order $r= p^j r'$,
with $(r',p)=1$. Then
for any $x\in \calS_p$, one has
$$ \ev_\xi(x) = \ev_\xi(\pi_j(x)).$$
If $i\neq j$ then $\ev_\xi(\pi_i(x))=0$.

\label{1100}
\end{lemma}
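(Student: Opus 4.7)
The plan is to reduce the lemma to the Chinese Remainder decomposition used in the proof of the preceding proposition. The point is that for $\xi$ of order $r = p^j r'$ with $(r',p)=1$, the integer $r$ lies in $p^j\N_p$ and in no other $p^i\N_p$ with $i\neq j$, and $\Phi_r$ is precisely the cyclotomic polynomial detected by $\xi$.

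First, I would recall that the evaluation $\ev_\xi\colon \calS_p \to \Z[1/p][\xi]$ factors as
\[
\calS_p \;\twoheadrightarrow\; \Z[1/p][q]/(\Phi_r) \;\xrightarrow{\ q\mapsto\xi\ }\; \Z[1/p][\xi],
\]
because $\Phi_r \in \Phi^*_\N$ and $\Phi_r(\xi)=0$. Then I would analyze this projection through the splitting \eqref{0912}: pick any finite family $n_1,\dots,n_m\in\N$ with pairwise distinct $p$-adic valuations, including $r = n_l$, and any exponents $k_s \geq 1$. By the Chinese Remainder argument used in that proposition,
\[
\Z[1/p][q]\bigg/\bigg(\prod_{s=1}^m \Phi_{n_s}^{k_s}\bigg) \;\simeq\; \prod_{s=1}^m \Z[1/p][q]\big/\big(\Phi_{n_s}^{k_s}\big),
\]
and since $v_p(n_l)=j$, the $l$-th factor on the right lies in the inverse system defining $\calS_{p,j}$ and in no other $\calS_{p,i}$. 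Passing to inverse limits over all such families shows that the composition $\calS_p \simeq \prod_i \calS_{p,i} \to \Z[1/p][q]/(\Phi_r)$ agrees with the natural projection on the $\calS_{p,j}$-coordinate and vanishes on every factor $\calS_{p,i}$ with $i\neq j$.

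The equality $\ev_\xi(x) = \ev_\xi(\pi_j(x))$ then follows immediately. For the second assertion, one views $\pi_i(x) \in \calS_{p,i}$ as an element of $\calS_p$ via the inclusion of the $i$-th factor (for $i\neq j$); by what was just shown, this element maps to zero in $\Z[1/p][q]/(\Phi_r)$, so $\ev_\xi(\pi_i(x)) = 0$. The argument is essentially formal once \eqref{0912} is in place; the only point requiring care is the compatibility of the Chinese Remainder isomorphisms across the directed system of cyclotomic products, which is already implicit in the proof of the splitting proposition, so I do not expect a genuine obstacle.
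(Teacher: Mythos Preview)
Your proposal is correct and follows essentially the same approach as the paper: both observe that $\ev_\xi$ factors through the projection $\calS_p \to \Z[1/p][q]/(\Phi_r)$ and then use the splitting \eqref{0912} to see that the factors $\calS_{p,i}$ with $i\neq j$ die in this quotient. The paper's proof compresses this to the single remark that $\calS_{p,i}/(\Phi_r(q))=0$ for $i\neq j$, while you spell out the Chinese Remainder bookkeeping behind it; the content is the same.
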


\begin{proof} Note that $\ev_\xi(x)$ is the image of $x$ under the projection
$\calS_p \to \calS_p/(\Phi_r(q))= \Z[1/p][\xi]$.
It remains to notice that $\calS_{p,i}/(\Phi_r(q))=0$ if $i\neq j$.
\end{proof}


\subsection{Splitting of $\calS_b$}

Suppose $p$ is a prime divisor of $b$. Let
$$ \calS_{b}^{p,0} := \Z[1/b][q]^{\N_p} \qquad \text {and } \quad
\calS_{b}^{p,\bz} := \Z[1/b][q]^{p\N}\simeq \prod_{j>0}\Z[1/b][q]^{p^j\N_p}
$$

We have similarly
$$ \calS_b = \calS_{b}^{p,0} \times \calS_{b}^{p,\bz}$$
with canonical projections $\pi^p_0 : \calS_b \to \calS_b^{p,0}$ and
$\pi^p_\bz : \calS_b \to \calS_b^{p,\bz}$.
Note that if $b=p$, then $\calS_{p}^{p,0}=\calS_{p,0}$ and
$\calS_{p}^{p,\bz}=\prod_{j>0} \calS_{p,j}$.
As before we set $\calS_{b,0}:=\Z[1/b][q]^{\N_b}$ and
$\pi_0:\calS_b\to\calS_{b,0}$.

Suppose $f \in \calS_b$.
If $\xi$ is a root of unity of order coprime with $p$, then
$\ev_\xi(f) = \ev_\xi(\pi^p_0(f))$. Similarly, if the order of $\xi$
is divisible by $p$, then
$\ev_\xi(f) = \ev_\xi(\pi^p_\bz(f))$.

\subsection{Properties of the  ring $\cR_b$}
For any $b\in \N$, we have
\[
\R_b\simeq\Z[1/b][q]^{\N_2}
\]
since the sequence $(q;q^2)_k,\, k\in \N$, is cofinal to $\Phi^*_{\N_2}$.
Here $\N_2$ is the set of all odd numbers.

Let $\{p_i\,|\, i=1,\dots,m\}$ be the set of  all distinct  {\em odd}
prime divisors of $b$.
For $\bn=(n_1,\dots,n_m)$,   a tuple
of numbers $n_i\in \N$, let $\bp^{\bn}=\prod_{i}p_i^{n_i}$.
Let $S_{\bn}:=\bp^{\bn}\N_{2b}$. Then
$\N_2=\amalg_{\bn}\, S_\bn$.
Moreover, for
$a\in S_{\bn},$ $a'\in S_{\bn'}$, we have
$(\Phi_{a}(q),\Phi_{a'}(q))=(1)$ in $\Z[1/b]$ if $\bn\neq\bn'$.
In addition, each $S_\bn$ is $\Z[1/b]$--connected.
An argument similar to that for Equation \eqref{0912} gives
\[
\cR_b\simeq\prod_{\bn}\Z[1/b][q]^{S_\bn}.
\]
In particular, $\R^{p_i,0}_b:=\Z[1/b][q]^{\N_{2p_i}}$ and
$\R^{p_i,\bz}_b:=\Z[1/b][q]^{p_i\N_2}$ for any $1\leq i\leq m$.
If $2\mid b$, then  $\R^{2,0}_b$ coincides with $\R_b$.

Let $T$ be an infinite set  of powers of an odd prime not dividing $b$ and
let $P$ be an
infinite set of odd primes not dividing $b$.

\begin{proposition}\label{main-corProof} With the above notations,
one has the following.
\begin{itemize}
\item[(a)] For any   $l\in S_\bn$,
the Taylor map
$\TT_l : \Z[1/b][q]^{S_\bn}\;\to  \;\Z[1/b][e_{l}][[q-e_{l}]]$
is injective.


\item[(b)] Suppose $f, g\in \Z[1/b][q]^{S_{\bn}}$
such that $\ev_\xi(f)=\ev_\xi(g)$ for any root of unity
$\xi$ with $\text{ord}(\xi)\in \bp^\n T$, then $f=g$. The same holds true if $\bp^\n T$ is replaced by $\bp^\n P$.

\item[(c)] For odd $b$, the natural homomorphism $\rho_{\N,\N_2}:\calS_b\to \R_b$ is injective. If $2\mid b$,
then the natural homomorphism $\calS_{b}^{2,0}\to \R_{b}$ is an isomorphism.

\end{itemize}
\end{proposition}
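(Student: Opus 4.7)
The plan is to reduce all three parts to Habiro's general injectivity theorems for cyclotomic completions \cite{Ha1}---namely Theorem~4.1 there (if $S$ is $R$--connected, then $R[q]^S\hookrightarrow R[q]^{S'}$ for every $S'\subset S$) and Theorem~6.1 (evaluation injectivity)---combined with the Chinese Remainder Theorem argument already used in the excerpt to obtain $\cR_b\simeq\prod_{\bn}\Z[1/b][q]^{S_\bn}$.

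Part~(a) follows directly. The $\Z[1/b]$--connectedness of $S_\bn$, noted just above the statement, and the inclusion $\{l\}\subset S_\bn$ yield an embedding $\rho_{S_\bn,\{l\}}\colon \Z[1/b][q]^{S_\bn}\hookrightarrow \Z[1/b][q]^{\{l\}}$ via Theorem~4.1. I then compose with the Taylor embedding $h\colon \Z[1/b][q]^{\{l\}}\hookrightarrow \Z[1/b][e_l][[q-e_l]]$, obtained (as in the Taylor--expansion subsection above) as the inverse limit of the injections $\Z[1/b][q]/(\Phi_l^k)\hookrightarrow \Z[1/b][e_l][q]/((q-e_l)^k)$ from Proposition~\ref{h_kInjektive} (applicable since $\Z[1/b]\subset\Q$); the composite is $\TT_l$. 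For part~(b), I apply Theorem~6.1 with $n=\bp^\bn$, which lies in $S_\bn=\bp^\bn\N_{2b}$ because $1\in\N_{2b}$. When $T$ consists of powers of an odd prime $p\nmid b$, each ratio $\bp^\bn p^k/\bp^\bn=p^k$ is a nonzero power of a prime for which $\Z[1/b]$ is $p$--adically separated; hence $\bp^\bn$ is adjacent to every element of $\bp^\bn T$. The same argument works for $\bp^\bn P$ with any prime $q\in P$ in place of $p$. Theorem~6.1 of \cite{Ha1} then yields injectivity of the evaluation on $\Z[1/b][q]^{S_\bn}$.

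For part~(c), let $p_1,\dots,p_r$ denote all distinct prime divisors of $b$. For odd $b$, each $n\in\N$ factors uniquely as $(\prod_i p_i^{n_i})\cdot a$ with $a\in\N_b$, giving $\N=\coprod_\bn\bp^\bn\N_b$; combining Lemma~\ref{0911} with CRT (exactly as in the excerpt's derivation of $\cR_b\simeq\prod_\bn\Z[1/b][q]^{S_\bn}$) yields
\[
\calS_b\;\simeq\;\prod_\bn\Z[1/b][q]^{\bp^\bn\N_b}.
\]
Each block $\bp^\bn\N_b$ is $\Z[1/b]$--connected, because any two of its elements can be linked by adjusting one prime factor (of a prime $\nmid b$) at a time. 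The inclusion $\bp^\bn\N_{2b}\subset\bp^\bn\N_b$ and Theorem~4.1 then give block--wise injectivity, hence $\rho_{\N,\N_2}\colon\calS_b\hookrightarrow\cR_b$ is injective. For $2\mid b$, coprimality to the even number $b$ forces oddness, so $\N_b=\N_{2b}$; by definition $\calS_b^{2,0}=\Z[1/b][q]^{\N_2}=\cR_b$ and the natural map is the tautological identification.

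The main obstacle is part~(b): one must correctly single out an element $n\in S_\bn$ that is adjacent to infinitely many elements of the prescribed set of orders, so that Theorem~6.1 of \cite{Ha1} applies. Once that adjacency check is arranged, (a) and (c) become routine applications of Theorem~4.1 combined with the CRT decomposition already developed in the paper.
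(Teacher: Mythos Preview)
Your proposal is correct and follows essentially the same approach as the paper: part~(a) via Theorem~4.1 of \cite{Ha1} composed with the injection $h$ from Proposition~\ref{h_kInjektive}, part~(b) via Theorem~6.1 of \cite{Ha1} applied with the base point $\bp^{\bn}\in S_{\bn}$, and part~(c) via the CRT block decomposition $\calS_b\simeq\prod_{\bn}\Z[1/b][q]^{\bp^{\bn}\N_b}$ together with Theorem~4.1 on each block (plus the tautology $\calS_b^{2,0}=\Z[1/b][q]^{\N_2}=\R_b$ when $2\mid b$). Your write-up supplies slightly more detail on the adjacency checks and on why the blocks are $\Z[1/b]$--connected, but the argument is the same as the paper's.
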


\begin{proof}
(a) Since each $S_\bn$ is $\Z[1/b]$--connected in Habiro sense,
by \cite[Theorem 4.1]{Ha1}, for any $l\in S_\bn$
\be\label{inj}
\rho_{S, \{l\}}: \Z[1/b][q]^{S_\bn}\to \Z[1/b][q]^{\{ l\}}
\ee
is injective.  Hence $\TT_l = h \circ \rho_{S, \{l\}}$ is injective too.


(b) Since both sets contain infinitely many
numbers adjacent to $\bp^\n$, the claim follows from Theorem 6.1 in \cite{Ha1}.

(c) Note that for odd $b$
\[
\calS_b\simeq \prod_{\bn}\Z[1/b][q]^{S'_\bn}
\]
where $S'_{\bn}:=\bp^{\bn}\N_{b}$.
Further observe that $S'_{\bn}$ is $\Z[1/b]$--connected if $b$ is odd.
 Then by \cite[Theorem 4.1]{Ha1} the map
$$\Z[1/b][q]^{S'_\bn}\hookrightarrow \Z[1/b][q]^{S_{\bn}}$$
is an embedding. If $2\mid b$, then $\calS^{2,0}_b:=\Z[1/b][q]^{\N_2}\simeq\R_b$.
\end{proof}

Assuming Theorem \ref{main},
 Proposition \ref{main-corProof} (b)  implies
Proposition  \ref{main-cor}.


\section{On the  Ohtsuki series at roots of unity}\label{Oh-strategy}

The Ohtsuki series was defined
for $SO(3)$ invariants by Ohtsuki \cite{Oh}
and extended to all other Lie algebras by the third author \cite{Le2,Le3}.

In the works \cite{Oh,Le2,Le3},
it was proved that the sequence of quantum invariants at $e_{p}$, where $p$ runs through the set of primes, obeys some congruence properties that allow to define uniquely the coefficients of the Ohtsuki series. The proof of the existence of such congruence relations is difficult.
In \cite{Ha}, Habiro proved that Ohtsuki series
coincide with the Taylor expansion of the unified invariant at $q=1$
in the case of integral homology spheres; this result was generalized to
 rational homology spheres
by the third author \cite{Le}.

Here, we prove that the sequence of $SO(3)$ invariants at the $pr$th
roots $e_{r}e_p$, where $r$ is a
fixed odd number and $p$ runs through the set of primes,
obeys some congruence properties that allow to define uniquely the coefficients of the ``Ohtsuki series'' at $e_r$, which is coincident with the
Taylor expansion at $e_r$.

\subsection{Extension of $\Z[1/b][e_r]$} Fix an odd positive integer $r$.
Assume $p$ is a prime bigger than $b$ and $r$. The cyclotomic rings $\Z[1/b][e_{pr}]$ and $\Z[1/b][e_r]$ are extensions of $\Z[1/b]$ of degree $\varphi(rp)=\varphi(r) \varphi(p)$ and $\varphi(r)$, respectively.
Hence $\Z[1/b][e_{pr}]$ is an extension of $\Z[1/b][e_r]$ of degree $\varphi(p)=p-1$.
Actually, it is easy to see that for
\[
f_p(q):= \frac{q^p - e_r^p}{q-e_r},
\]
the map
\[
\frac{\Z[1/b,e_r][q]}{(f_p(q))} \;\to\; \Z[1/b][e_{pr}], \hspace{5mm}
q\mapsto e_p e_r,
\]
is an isomorphism.
We put $x=q-e_r$ and get
\begin{equation} \Z[1/b][e_{pr}]\simeq \frac{\Z[1/b,e_r][x]}{(f_p(x+e_r))}\; .
\label{5500}
\end{equation}
Note that
\[
f_p(x+ e_r) = \sum_{n=0}^{p-1} \binom{p}{n+1} x^n e_r^{p-n-1}
\]
is a monic polynomial in $x$ of degree $p-1$, and the coefficient of $x^n$ in $f_p(x+e_r)$ is divisible by $p$ if $n \le p-2$.

\subsection{Arithmetic expansion of $\tau'_M$} Suppose $M$ is a rational
homology 3--sphere with $|H_1(M,\Z)|=b$.
By Theorem \ref{main}, for any root of unity $\xi$ of order $pr$
$$\tau'_M(\xi)\in \Z[1/b][e_{pr}] \simeq \frac{\Z[1/b,e_r][x]}{(f_p(x+e_r))}\; .$$
Hence we can write
\be\label{Ohtsuki}
\tau'_M(e_{r}e_{p})= \sum_{n=0}^{p-2} a_{p,n} x^n
\ee
where  $a_{p,n}\in \Z[1/b,e_r]$.
The following proposition shows that the coefficients $a_{p,n}$
stabilize as $p\to \infty$.
\begin{proposition}\label{main-cor1} Suppose $M$ is a
rational homology 3--sphere with $|H_1(M,\Z)|=b$, and $r$ is an odd
positive integer.
For every non--negative integer $n$,
there exists a unique invariant $a_n= a_n(M) \in \Z[1/b,e_r]$ such that
for every prime $p > \max (b,r)$,
 we have
\begin{equation}
a_n\equiv a_{p,n} \pmod p\;\;\; \text{in $ \Z[1/b,e_r]$ for} \;\;\;
0\le n \le  p-2.
\label{5501}
\end{equation}
Moreover, the formal series $\sum_{n} a_n (q-e_r)^n$ is equal to the Taylor
 expansion of the unified invariant $I_M$ at $e_r$.
\end{proposition}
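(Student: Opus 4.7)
The plan is to define $a_n$ as the coefficients of the Taylor expansion of $I_M$ at $e_r$ (which immediately settles the final assertion) and then to derive the congruence $a_n\equiv a_{p,n}\pmod p$ by reducing this Taylor series modulo the polynomial $f_p(x+e_r)$ and using the fact that $f_p(x+e_r)\equiv x^{p-1}\pmod p$.

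First I would restrict $I_M$ to a single factor of $\R_b$. Since $r$ is odd it lies in a unique piece $S_\bn$ of the decomposition $\N_2=\amalg_\bn S_\bn$, and for any prime $p>b$ we also have $pr\in S_\bn$ because $(p,2b)=1$. Under the isomorphism $\R_b\simeq \prod_\bn \Z[1/b][q]^{S_\bn}$ only the $S_\bn$-component of $I_M$ contributes to $\ev_{e_re_p}(I_M)$, so we may regard $I_M$ as an element of $\Z[1/b][q]^{S_\bn}$. The Taylor map
\[
\TT_r\colon \Z[1/b][q]^{S_\bn}\longrightarrow \Z[1/b,e_r][[q-e_r]]
\]
from Section \ref{cyc} is an injective ring homomorphism, and the coefficients $a_n\in\Z[1/b,e_r]$ of $\TT_r(I_M)=\sum_{n\ge 0} a_n(q-e_r)^n$ give the Taylor expansion at $e_r$ by construction.

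Next I would identify $a_{p,n}$ with the reduction of this Taylor series modulo $f_p(x+e_r)$, where $x=q-e_r$. Since $f_p(x+e_r)$ is monic of degree $p-1$ in $x$, reduction modulo it is well defined on $\Z[1/b,e_r][[x]]$, and via the isomorphism \eqref{5500} the composition
\[
\Z[1/b][q]^{S_\bn}\xrightarrow{\TT_r}\Z[1/b,e_r][[x]]\longrightarrow \Z[1/b,e_r][x]/(f_p(x+e_r))\simeq\Z[1/b,e_{pr}]
\]
is a ring homomorphism sending $q\mapsto e_re_p$. The direct evaluation $\ev_{e_re_p}$ is the unique such homomorphism: both factor through the quotient $\Z[1/b][q]^{S_\bn}/(\Phi_{pr})\simeq \Z[1/b][q]/(\Phi_{pr})$ and are determined by the image of $q$. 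Hence
\[
\tau'_M(e_re_p)=\sum_{n=0}^{p-2}a_{p,n}\,x^n\equiv \sum_{n\ge 0} a_n\,x^n\pmod{f_p(x+e_r)}.
\]

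Finally, the $x^n$-coefficient of $f_p(x+e_r)=\sum_{n=0}^{p-1}\binom{p}{n+1}e_r^{p-n-1}x^n$ is divisible by $p$ for $0\le n\le p-2$, while the leading coefficient is $1$; hence $f_p(x+e_r)\equiv x^{p-1}\pmod p$. Reducing the preceding congruence modulo $p$ in $(\Z[1/b,e_r]/p)[x]/(x^{p-1})$ and matching coefficients yields $a_n\equiv a_{p,n}\pmod p$ for $0\le n\le p-2$. Uniqueness is standard: $\Z[1/b,e_r]$ is an order in $\Q(e_r)$, so a nonzero element has norm in $\Z[1/b]$ divisible by only finitely many primes; if $\tilde a_n$ satisfied the same congruences for all $p>\max(b,r,n+1)$, then $a_n-\tilde a_n$ would be divisible by infinitely many primes and must vanish. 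The one nontrivial technical check is the compatibility of $\TT_r$ with $\ev_{e_re_p}$ in the third paragraph; once that is in place, the congruence and uniqueness are formal.
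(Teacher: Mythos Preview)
Your proof is correct and follows essentially the same approach as the paper: define $a_n$ as the Taylor coefficients of $I_M$ at $e_r$, show that evaluation at $e_re_p$ coincides with reducing the Taylor series modulo $f_p(x+e_r)$, and then use that the coefficients of $f_p(x+e_r)$ below top degree are divisible by $p$. The paper packages the compatibility of $\TT_r$ with $\ev_{e_re_p}$ into a commutative diagram passing through $\Z[1/b,e_r][q]^{r\N_2}$ rather than your direct ``both factor through $\Z[1/b][q]/(\Phi_{pr})$'' argument, but this is purely organizational.
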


\begin{proof}
The uniqueness of $a_n$ follows from the easy fact that if $a \in \Z[1/b,e_r]$ is divisible
by infinitely many rational primes $p$, then $a=0$.

Assume Theorem \ref{main} holds. We define $a_n$ to be the coefficient of $(q-e_r)^n$ in
 the Taylor series of $I_M$ at $e_r$, and will show that
Equation \eqref{5501} holds true.

Recall that $x= q-e_r$.  The diagram
$$\begin{CD} \Z[\frac{1}{b}][q]^{\N_2} @>>> \Z[\frac{1}{b}, e_r][q]^{r\N_2} @>>> \Z[\frac{1}{b},e_r][[x]] \\
@VV q \to e_re_pV     @VV /(f_p(q))V   @VV /(f_p(x+e_r))V \\
\Z[\frac{1}{b}][e_{rp}] @> e_re_p \to q >> \frac{\Z[\frac{1}{b}, e_r][q]}{(f_p(q))} @>>>\frac{ \Z[\frac{1}{b},e_r][[x]]}{(f_p(x+e_r))}
\end{CD}$$
is commutative. Here the middle and the right vertical maps are the
quotient maps
 by the corresponding ideals. Note that $I_M$ belongs to  the upper left
corner ring, its
Taylor series is the image in the upper right corner ring, while
the evaluation
 \eqref{Ohtsuki} is in the lower middle ring. Using the commutativity
at the lower right corner ring, we see that

$$ \sum_{n=0}^{p-2} a_{p,n}x^n  = \sum_{n=0}^{\infty} a_{n}x^n  \pmod{f_p(x+e_r)}
\quad \text{in} \quad \Z[1/b,e_r][[x]].$$
Since the coefficients of $f_p(x+e_r)$ up to degree $p-2$ are divisible by
$p$, we get the congruence \eqref{5501}.
\end{proof}

\begin{remark}  Proposition \ref{main-cor1}, when $r=1$, was the main result
of Ohtsuki \cite{Oh}, which leads to the development of the theory of finite
type invariant and the LMO invariant.

 When $(r,b)=1$, then Taylor series at $e_r$ determines and is determined
by the Ohtsuki series. But when, say, $r$ is a divisor of $b$, a priori
the two Taylor series, one at $e_r$ and the other at $1$, are independent. We suspect
that the Taylor series at $e_r$, with $r\mid b$, corresponds to a new type of LMO invariant.

\end{remark}


\section{Frobenius maps} \label{map-frob}
The proof of Theorem \ref{0078}, and hence of the main theorem,
 uses the Laplace transform method.
The aim of this section is to show that the image of the
Laplace transform, defined in Section \ref{laplace},
belongs to $\R_b$, i.e. that
certain roots of $q$ exist in $\R_b$.

\subsection{On the module $\Z[q]/(\Phi^k_{n}(q))$} Since cyclotomic
completions are built from modules like $\Z[q]/(\Phi^k_{n}(q))$,
we first consider these modules.
Fix $n,k\ge 1$. Let
$$ E  := \frac{\Z[q]}{(\Phi^k_{n}(q))}, \quad \text { and } \quad G:=
\frac{\Z[e_{n}][x]}{(x^k)}\; .$$

The following is probably well--known.
\begin{proposition}\label{h_kInjektive}$\text{ }$
\begin{itemize}
\item[(a)] Both $E$ and $G$ are free $\Z$--modules of the same
 rank $k \varphi(n)$.

\item[(b)] The algebra map $h: \Z[q] \to \Z[e_n][x]$ defined by

$$ h (q) = e_n + x$$
descends to a well--defined algebra homomorphism, also denoted by $h$, from $E$ to $G$.
Moreover, the algebra homomorphism $h: E \to G$ is injective.

\end{itemize}
\end{proposition}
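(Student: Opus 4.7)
The plan is to handle (a) by standard structural observations and to reduce (b) to a base change over $\Q$ where the Galois symmetry of $\Phi_n$ supplies the kernel computation. For (a), the polynomial $\Phi_n^k(q)$ is monic of degree $k\varphi(n)$, so Euclidean division gives $E$ the $\Z$-basis $\{1, q, \ldots, q^{k\varphi(n)-1}\}$. The ring $\Z[e_n]$ is $\Z$-free of rank $\varphi(n)$ (it is the ring of integers of the $n$th cyclotomic field), and $\Z[e_n][x]/(x^k)$ is $\Z[e_n]$-free of rank $k$ with basis $\{1, x, \ldots, x^{k-1}\}$. Combining the two gives $G$ a $\Z$-basis of cardinality $k\varphi(n)$, matching $E$.

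For the first half of (b), it suffices to verify that $\Phi_n(e_n + x)^k \in (x^k)$ inside $\Z[e_n][x]$. Writing $\Phi_n(e_n + x)$ as a polynomial in $x$ with coefficients in $\Z[e_n]$, its constant term is $\Phi_n(e_n) = 0$, so $x$ divides $\Phi_n(e_n + x)$ and therefore $x^k$ divides its $k$th power. Hence $h$ descends to a well-defined algebra homomorphism $E \to G$.

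For injectivity, since $E$ is $\Z$-free, hence torsion-free, it embeds into $E \otimes_\Z \Q = \Q[q]/(\Phi_n^k(q))$, so it is enough to show that the induced map $\bar h : \Q[q]/(\Phi_n^k(q)) \to \Q(e_n)[x]/(x^k)$ is injective. Suppose $f \in \Q[q]$ with $\deg f < k\varphi(n)$ satisfies $x^k \mid f(e_n + x)$ in $\Q(e_n)[x]$. Substituting $y = e_n + x$ rewrites this as $(y - e_n)^k \mid f(y)$ in $\Q(e_n)[y]$. Since $f$ has rational coefficients, applying any $\sigma \in \mathrm{Gal}(\Q(e_n)/\Q)$ yields $(y - \sigma(e_n))^k \mid f(y)$; the distinct roots $\sigma(e_n)$ give pairwise coprime linear factors whose $k$th powers multiply to $\Phi_n(y)^k$. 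Thus $\Phi_n^k \mid f$, and the degree bound forces $f = 0$. The only nontrivial ingredient is the Galois divisibility step, and it is short once the base change is in place; everything else is bookkeeping.
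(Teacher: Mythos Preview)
Your proof is correct and follows essentially the same approach as the paper: part (a) is handled identically via monicity and the tensor decomposition of $G$, and the injectivity in (b) rests on the same Galois-conjugate argument (if $(y-e_n)^k$ divides $f$, then so do all $(y-e_n^j)^k$ with $(j,n)=1$, forcing $\Phi_n^k \mid f$). Your explicit base change to $\Q$ before invoking Galois is a mild cosmetic difference---the paper argues directly over $\Z$---but the substance is the same.
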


\begin{proof} (a) Since $\Phi^k_n(q)$ is a monic polynomial in $q$ of degree
$k \varphi(n)$, it is clear that
$$ E=  \Z[q]/(\Phi^k_n(q))$$
is a free $\Z$--module of rank $k\varphi(n)$.
Since $G = \Z[e_n]\otimes_\Z \Z[x]/(x^k)$, we see
 that $G$ is free over $\Z$ of rank $k\varphi(n)$.

(b) To prove that $h$ descends to a map $E \to G$,
 one needs to verify that
$h(\Phi^k_n(q))=0$.
Note that
$$ h(\Phi^k_n(q))= \Phi^k_{n}(x + e_n) =
\prod_{(j,n)=1} (x+e_n - e_n^j)^k.$$
When $j=1$, the factor is $x^k$, which is 0 in $\Z[e_n][x]/(x^k)$.
Hence $h(\Phi^k_n(q))=0$.

Now we prove that $h$ is injective. Let $f(q)\in\Z[q]$. Suppose $h(f(q))=0$,
or $f(x+e_n)=0$ in
$\Z[e_n][x]/(x^k)$.
It follows that $f(x+e_n)$ is divisible by $x^k$; or that $f(x)$ is divisible by
$(x-e_n)^k$.
Since $f$ is a polynomial with coefficients in $\Z$, it follows that $f(x)$ is
divisible by all Galois conjugates $(x-e_n^j)^k$ with $(j,n)=1$.
Then $f$ is
divisible by $\Phi^k_n(q)$. In other words, $f=0$ in
$E=  \Z[q]/(\Phi^k_n(q))$.
\end{proof}

\subsection{A Frobenius homomorphism} We use $E$ and $G$ of the
previous subsection. Suppose $b$ is a positive integer coprime with
$n$.
If $\xi$ is a primitive $n$th root of 1, i.e. $\Phi_n(\xi)=0$, then
$\xi^b$ is also a primitive $n$th root of $1$, i.e. $\Phi_n(\xi^b)=0$. It follows that
$\Phi_n(q^b)$ is divisible
by $\Phi_n(q)$.

Therefore
 the algebra map $F_b: \Z[q] \to \Z[q]$,
defined by $F_b(q)=q^b$,
descends to a well--defined algebra map, also denoted by $F_b$,
from $E$ to $E$. We want to understand the image $F_b(E)$.
\newcommand{\rk}{\operatorname{rk}}
\begin{proposition} The image $F_b(E)$ is a free $\Z$--submodule
of $E$ of maximal rank, i.e. $\rk(F_b(E)) = \rk(E)$. Moreover, the index
of $F_b(E)$ in $E$ is $b^{k(k-1)\varphi(n)/2}$.
\end{proposition}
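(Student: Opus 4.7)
The plan is to identify the index $[E:F_b(E)]$ with $|\det F_b|$ and to compute this determinant after extending scalars to $\Q(e_n)$, where the Chinese Remainder Theorem splits $E$ into a product of local rings on which $F_b$ can be written down explicitly.

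First I would check that $F_b$ is a well defined $\Z$-algebra endomorphism of $E$: since $(b,n)=1$, $\Phi_n(q)$ divides $\Phi_n(q^b)$ in $\Z[q]$, hence $\Phi_n^k(q)$ divides $\Phi_n^k(q^b)$, and $q\mapsto q^b$ descends. I would then appeal to the standard Smith normal form statement: for any $\Z$-linear endomorphism $T$ of a finitely generated free $\Z$-module $M$, the image $T(M)$ is free, has maximal rank iff $\det T\neq 0$, and in that case $[M:T(M)]=|\det T|$. Since determinants are invariant under base extension, it then suffices to compute $\det F_b$ as a $\Q(e_n)$-linear endomorphism of $E\otimes_\Z\Q(e_n)$.

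Next I would decompose
\[
E\otimes_\Z\Q(e_n)\;\cong\;\prod_{j\in(\Z/n)^*}E_j,\qquad E_j:=\Q(e_n)[q]/((q-e_n^j)^k),
\]
via the Chinese Remainder Theorem, and identify the action of $F_b$. Letting $e_j$ be the orthogonal idempotent supported on the $j$-th factor, a short computation using $q^b\equiv e_n^{sb}\pmod{(q-e_n^s)^k}$ yields $F_b(e_j)=e_{jb^{-1}}$, where $b^{-1}$ is the multiplicative inverse of $b$ mod $n$. Hence $F_b$ restricts to $\Q(e_n)$-linear maps $\beta_j:E_j\to E_{jb^{-1}}$ and its global matrix is block-permutation. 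Using the local parameter $y_s=q-e_n^s$ on $E_s$, the expansion
\[
q^b-e_n^j\;=\;(e_n^{jb^{-1}}+y_{jb^{-1}})^b-e_n^j\;=\;b\,e_n^{j-jb^{-1}}\,y_{jb^{-1}}+O(y_{jb^{-1}}^2)\quad\text{in }E_{jb^{-1}}
\]
exhibits $\beta_j$ as upper triangular in the bases $\{y_j^i\}_{i=0}^{k-1}$ and $\{y_{jb^{-1}}^i\}_{i=0}^{k-1}$ with diagonal entries $b^i e_n^{ij(1-b^{-1})}$, so each $\beta_j$ is an isomorphism with $\det\beta_j=b^{k(k-1)/2}\,e_n^{j(1-b^{-1})k(k-1)/2}$.

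Assembling the block-permutation determinant, I would obtain
\[
\det F_b\;=\;\pm\,b^{k(k-1)\varphi(n)/2}\cdot\prod_{j\in(\Z/n)^*}e_n^{j(1-b^{-1})k(k-1)/2}.
\]
The remaining product of roots of unity equals some root of unity $\zeta$; since $\det F_b\in\Z$, this forces $\zeta=\pm 1$, so $|\det F_b|=b^{k(k-1)\varphi(n)/2}$, as required. The main obstacle is the bookkeeping around the block-permutation structure---verifying that $F_b$ really does send $E_j$ into $E_{jb^{-1}}$ (and nowhere else) and pinning down the precise diagonal of each $\beta_j$; the collapse of the root-of-unity factor at the end is then automatic from the rationality of $\det F_b$.
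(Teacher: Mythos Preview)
Your proof is correct and follows the same overall strategy as the paper --- compute $|\det F_b|$ after a convenient scalar extension and find an upper-triangular form with diagonal entries $b^i$ --- but the implementation differs. The paper does not extend to $\Q(e_n)$ and split via CRT; instead it uses the single injection $h:E\hookrightarrow G=\Z[e_n][x]/(x^k)$ (same $\Z$-rank) from Proposition~\ref{h_kInjektive}, extends $F_b$ to the $\Q$-linear (Galois-twisted, \emph{not} $\Q(e_n)$-linear) map $\tilde F_b$ on $G$ sending $e_n\mapsto e_n^b$, $x\mapsto (x+e_n)^b-e_n^b$, and computes $\det\tilde F_b$ directly by composing with two auxiliary maps of determinant $\pm 1$ to reach an upper-triangular matrix. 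Your CRT decomposition is more structural and makes the permutation-of-factors picture explicit, at the cost of the extra block-permutation bookkeeping and the final step of killing the root-of-unity factor via $\det F_b\in\Z$; the paper's single-factor setup avoids both of these. Either way the core computation $\tilde F_b(x^l)=b^l(\text{unit})x^l+O(x^{l+1})$ is the same.
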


\begin{proof}
 Using Proposition \ref{h_kInjektive} we identify $E$ with its image
$h(E)$ in $G$.

Let  $\tilde F_b: G \to G$ be the $\Z$--algebra homomorphism defined by
$\tilde F_b(e_n) =e_n^b, \tilde F_b(x)=
(x+e_n)^b - e_n^b$.

Note that $\tilde F_b(x) = b e_n^{b-1} x + O(x^2)$, hence $\tilde F_b(x^k)=0$.
It is easy to see that $\tilde F_b$ is
a well--defined algebra homomorphism, and that $\tilde F_b$ restricted to $E$ is exactly
$F_b$. Since $E$ is a lattice of
maximal rank in $G\otimes \Q$, it follows that the index of $F_b$ is exactly the
determinant of $\tilde F_b$, acting on
$G\otimes \Q$.

A basis of $G$ is $e_n^j x^l$, with $(j,n)=1, 0<j<n$ and $j=0$,
and $0\le l < k$. Note that
\[
\tilde F_b(e_n^j x^l) = b^le_n^{jb}  e_n^{(b-1)l} x^l + O(x^{l+1}).
\]
Since $(b,n)=1$, the set $e_n^{jb}$, with $(j,n)=1$ is the same as the set
$e_n^{j}$, with $(j,n)=1$.
Let $f_1: G\to G$ be the $\Z$--linear map defined by $f_1(e_n^{jb} x^l) =
e_n^{j} x^l$. Since $f_1$ permutes the basis elements,
its determinant  is $\pm 1$. Let $f_2: G\to G$ be the $\Z$--linear
map defined by $f_2(e_n^{j} x^l) = e_n^{j} (e_n^{1-b}x)^l$. The
determinant of $f_2$ is again $\pm 1$. This is because, for any fixed $l$,
$f_2$ restricts to the automorphism of $\Z[e_n]$ sending
$a$ to $ e^s_n a$, each of these maps has a well--defined inverse:
$a \mapsto e^{-s}_n a$.
Now
$$ f_1 f_2 \tilde F_b(e_n^j x^l) = b^l e_n^j x^l + O(x^{l+1})$$
can be described by an upper triangular matrix with $b^l$'s on the diagonal;
its determinant is equal to
$b^{k(k-1)\varphi(n)/2}$.
\end{proof}

From the proposition we see that if $b$ is invertible, then the index is equal to 1, hence we have

\begin{proposition} \label{onen}
For any $n$ coprime with $b$ and $k\in \N$, the Frobenius homomorphism $F_b:  \Z[1/b][q]/\left(\Phi^k_n(q)\right) \to \Z[1/b][q]/\left(\Phi^k_n(q)\right)$, defined by $F_b(q)= q^b$,
is an isomorphism.
\end{proposition}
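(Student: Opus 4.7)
The plan is to deduce the statement directly from the preceding proposition by base--changing from $\Z$ to $\Z[1/b]$, using that the cokernel of $F_b$ on $E=\Z[q]/(\Phi_n^k(q))$ is annihilated by a power of $b$.

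First I would check injectivity of $F_b: E \to E$ over $\Z$. By the preceding proposition, the image $F_b(E)$ is a free $\Z$--submodule of $E$ of rank equal to $\rk(E) = k\varphi(n)$. Since $E$ is free over $\Z$, any submodule is free, and the kernel of $F_b$ is a submodule whose rank equals $\rk(E) - \rk(F_b(E)) = 0$; hence $\ker(F_b) = 0$.

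Next I would package this as a short exact sequence of $\Z$--modules
\begin{equation*}
0 \longrightarrow E \xrightarrow{\;F_b\;} E \longrightarrow C \longrightarrow 0,
\end{equation*}
where, again by the preceding proposition, $C = E/F_b(E)$ is a finite abelian group of order $b^{k(k-1)\varphi(n)/2}$, hence $b$--power torsion. Now I would tensor this sequence with $\Z[1/b]$, which is flat over $\Z$. Flatness preserves exactness, and since $C$ is $b$--power torsion we have $C \otimes_\Z \Z[1/b] = 0$. Identifying $E \otimes_\Z \Z[1/b] = \Z[1/b][q]/(\Phi_n^k(q))$, the tensored sequence becomes
\begin{equation*}
0 \longrightarrow \Z[1/b][q]/(\Phi_n^k(q)) \xrightarrow{\;F_b\;} \Z[1/b][q]/(\Phi_n^k(q)) \longrightarrow 0,
\end{equation*}
which is the desired isomorphism statement. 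There is no real obstacle here beyond the verification of injectivity; the heart of the argument is the index computation already carried out in the preceding proposition, and the present proposition is essentially a corollary obtained by inverting~$b$.
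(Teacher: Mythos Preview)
Your proof is correct and follows essentially the same approach as the paper: the paper's proof is a single sentence observing that since the index computed in the preceding proposition is a power of $b$, inverting $b$ makes the index equal to $1$, hence $F_b$ becomes an isomorphism. Your argument spells out exactly this via the short exact sequence and base change along the flat map $\Z \to \Z[1/b]$, together with an explicit check of injectivity over $\Z$; this is a cleaner write--up of the same idea rather than a different route.
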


\subsection{Frobenius endomorphism of $\calS_{b,0}$}
For finitely many $n_i\in\N_b$ and $k_i\in \N$, the Frobenius endomorphism

\[
F_b :\frac{\Z[1/b][q]}{\left(\prod_i\Phi^{k_i}_{n_i}(q)\right)}\to
\frac{\Z[1/b][q]}{\left(\prod_i\Phi^{k_i}_{n_i}(q)\right)}
\]
sending  $q$ to  $q^b$, is again well--defined.
Taking the inverse limit, we get an algebra endomorphism
\[
F_b: \Z[1/b][q]^{\N_b} \to \Z[1/b][q]^{\N_b}.
\]

\begin{Thm}\label{frob}
The Frobenius endomorphism
 $F_b: \Z[1/b][q]^{\N_b} \to \Z[1/b][q]^{\N_b}$, sending $q$ to $q^b$,
 is an isomorphism.
\end{Thm}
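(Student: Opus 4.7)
The plan is to reduce the claim to the analogous statement at each finite stage $R[q]/(f)$ of the inverse system (with $R = \Z[1/b]$ and $f \in \Phi^*_{\N_b}$), and then lift the resulting level-wise inverses back to $R[q]^{\N_b}$. Proposition \ref{onen} already delivers the case $f = \Phi^k_n$ of a single cyclotomic factor, so the real task is to upgrade this to an arbitrary product $f = \prod_i \Phi^{k_i}_{n_i}$.

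After a routine check that $F_b$ descends to a compatible family of endomorphisms of the quotients $R[q]/(f)$ (using $\Phi_n(q) \mid \Phi_n(q^b)$ for $(n,b)=1$), the heart of the argument is to promote each such $F_b\colon R[q]/(f) \to R[q]/(f)$ to an isomorphism. Since $R[q]/(f)$ is free of finite rank over $R$, it suffices to show that $\det_R(F_b)$ is a unit in $R$. Computed in the monomial basis $\{1,q,\dots,q^{\deg f - 1}\}$, this determinant coincides with $\det_\Q(F_b \mid \Q[q]/(f))$. Over $\Q$, distinct cyclotomic polynomials are coprime, so CRT yields a Frobenius-equivariant decomposition
\[
\Q[q]/(f) \;\cong\; \prod_i \Q[q]/(\Phi^{k_i}_{n_i}),
\]
and the determinant factors as $\prod_i \det_R(F_b \mid R[q]/(\Phi^{k_i}_{n_i}))$. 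Each factor is a unit in $R$ by Proposition \ref{onen}, so the product is a unit as well, and $F_b$ is an isomorphism on $R[q]/(f)$.

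Once $F_b$ is known to be an isomorphism at every finite stage, a short argument using injectivity of $F_b$ on each $R[q]/(g)$ shows that the inverses $F_b^{-1}$ are compatible with the quotient maps of the directed system; they therefore assemble into a two-sided inverse of $F_b$ on $\Z[1/b][q]^{\N_b}$.

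The step most likely to cause trouble is the reduction to individual cyclotomic factors, since CRT fails over $R = \Z[1/b]$ as soon as $n_i/n_j$ is a power of a prime not dividing $b$, and one cannot simply decompose $R[q]/(f)$ as a direct product of the $R[q]/(\Phi^{k_i}_{n_i})$. Passing to the determinant sidesteps this difficulty, because the determinant is invariant under the base change $R \hookrightarrow \Q$, and over $\Q$ the CRT decomposition is unobstructed.
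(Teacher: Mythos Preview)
Your proof is correct and follows essentially the same route as the paper: reduce to the finite quotients $R[q]/(\prod_i \Phi_{n_i}^{k_i})$, use Proposition~\ref{onen} on each single cyclotomic factor, and then pass to the inverse limit. The only cosmetic difference is in how the product of factors is handled---the paper embeds $R[q]/(\prod_i \Phi_{n_i}^{k_i})$ into $\prod_i R[q]/(\Phi_{n_i}^{k_i})$ over $R=\Z[1/b]$ via the injective diagonal map $J$ and argues directly, whereas you base-change to $\Q$ to get an honest CRT splitting and compare determinants; your version is a bit more explicit about why surjectivity holds on the domain, a step the paper leaves terse.
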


\begin{proof} For finitely many $n_i\in\N_b$ and $k_i\in \N$,
consider the natural algebra homomorphism
$$J:\frac{\Z[1/b][q]}{\left(\prod_i\Phi^{k_i}_{n_i}(q)\right)}
\to
\prod_i \frac{ \Z[1/b][q]}{\left(\Phi^{k_i}_{n_i}(q)\right)} .$$
This map is injective,
because  in the unique factorization domain $\Z[1/b][q]$, one has
\[
(\Phi_{n_1}(q)^{k_1} \dots \Phi_{n_s}(q)^{k_s})
= \bigcap_{j=1}^s \Phi_{n_j}(q)^{k_j} \, .
\]
Since the Frobenius homomorphism  commutes with $J$
and is an isomorphism on the target of $J$
by Proposition \ref{onen}, it is an isomorphism on the domain of $J$.
Taking the inverse limit, we get the claim.
\end{proof}

\subsection{Existence of $b$th root of $q$ in $\calS_{b,0}$}\label{qthroot}

\begin{lemma} Suppose $n$ and $b$ are coprime positive integers and $y \in \Q[e_n]$ such that $y^b=1$. Then $y =\pm 1$. If $b$ is odd then $y=1$.
\label{0923}
\end{lemma}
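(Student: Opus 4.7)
The plan is to exploit the fact that any solution of $y^b=1$ in a field of characteristic zero is automatically a root of unity, and to show via a degree comparison in cyclotomic fields that the order of $y$ must be at most $2$.

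First, let $m$ be the exact (multiplicative) order of $y$, so $m \mid b$. Since $(b,n)=1$, this forces $(m,n)=1$. The assumption $y \in \Q[e_n]$ means $e_m \in \Q(e_n)$, so the compositum satisfies $\Q(e_m, e_n) \subseteq \Q(e_n)$. Using the standard identity $\Q(e_m, e_n) = \Q(e_{mn})$ (valid because $(m,n)=1$), I get the inclusion $\Q(e_{mn}) \subseteq \Q(e_n)$, hence $\varphi(mn) \leq \varphi(n)$.

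Next I would invoke the multiplicativity of the Euler function: since $(m,n)=1$, $\varphi(mn)=\varphi(m)\varphi(n)$. Combined with the previous inequality this gives $\varphi(m)\leq 1$, forcing $\varphi(m)=1$ and therefore $m\in\{1,2\}$. Thus $y = \pm 1$. Finally, if $b$ is odd, then $m\mid b$ rules out $m=2$, leaving only $m=1$, i.e.\ $y=1$.

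This argument is essentially routine algebraic number theory, so there is no serious obstacle; the only point to be slightly careful about is the equality $\Q(e_m, e_n) = \Q(e_{mn})$, which holds under the coprimality hypothesis $(m,n)=1$ and is what makes the $\varphi$-multiplicativity available.
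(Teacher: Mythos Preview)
Your proof is correct and follows essentially the same approach as the paper: both let $m$ (the paper writes $d$) be the order of $y$, note $(m,n)=1$, and then use standard facts about cyclotomic fields to conclude $m\in\{1,2\}$. The only cosmetic difference is that the paper invokes $\Q(e_m)\cap\Q(e_n)=\Q$ directly, whereas you reach the same conclusion via the degree comparison $\varphi(m)\varphi(n)=\varphi(mn)\le\varphi(n)$.
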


\begin{proof}Let $d\mid b$ be the order of $y$, i.e. $y$ is a primitive $d$th root of $1$. Then $\Q[e_n]$ contains $y$, and hence $e_d$.
Since $(n,d)=1$, one has $\Q[e_n]\cap \Q[e_d]=\Q$ (see e.g.
 \cite[Corollary of IV.3.2]{Lang}). Hence if $e_d \in \Q[e_n]$,
then $e_d\in\Q$, it follows that $d=1$ or $2$.
Thus $y=1$ or $y=-1$. If $b$ is odd,
 then $y$ cannot be $-1$.
\end{proof}

\begin{lemma} Let $b$ be a positive integer, $T\subset \N_b$, and
 $y \in \Q[q]^T$ satisfying $y^b=1$. Then $y =\pm 1$. If $b$ is odd then $y=1$.
\label{0913}
\end{lemma}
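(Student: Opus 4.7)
The plan is to reduce the statement to Lemma \ref{0923} via a Taylor expansion at some primitive root $e_n$ with $n\in T$, together with a standard formal-power-series argument.

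First, I would pick any $n\in T$ (the case $T=\emptyset$ being vacuous). Composing the projection $\rho_{T,\{n\}}:\Q[q]^T\to\Q[q]^{\{n\}}$ with the canonical map $\Q[q]^{\{n\}}\to\Q[e_n][[q-e_n]]$ (sending $q\mapsto e_n+(q-e_n)$, and injective by the same argument as in Proposition \ref{h_kInjektive}; over the field $\Q$ it is in fact an isomorphism by a finite-dimensional count at each truncation), I obtain a ring homomorphism $\TT_n:\Q[q]^T\to\Q[e_n][[q-e_n]]$.

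Second, I would analyze the relation $\TT_n(y)^b=1$ in the formal power series ring $\Q[e_n][[q-e_n]]$. Writing $\TT_n(y)=c_0+c_1(q-e_n)+c_2(q-e_n)^2+\cdots$, comparison of constant terms forces $c_0^b=1$, so Lemma \ref{0923} applied to $c_0\in\Q[e_n]$ yields $c_0\in\{\pm 1\}$, with $c_0=1$ when $b$ is odd. Factoring $\TT_n(y)=c_0(1+z)$ with $z\in(q-e_n)\Q[e_n][[q-e_n]]$, the equation $(1+z)^b=1$ rewrites as $z\cdot u=0$, where $u=b+\binom{b}{2}z+\binom{b}{3}z^2+\cdots$ has nonzero constant term $b$ and is therefore a unit of the power series ring. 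Since $\Q[e_n][[q-e_n]]$ is an integral domain in characteristic zero, $z=0$, so $\TT_n(y)=c_0$ is a constant.

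The last step is to upgrade this local identification to the global identity $y=c_0$ in $\Q[q]^T$, where $c_0$ is viewed as the image of $\pm 1\in\Q\hookrightarrow\Q[q]\hookrightarrow\Q[q]^T$. Both $y$ and $c_0$ have the same image under $\TT_n$, so the conclusion reduces to injectivity of $\TT_n$. I expect this injectivity to be the main obstacle: a priori Taylor expansion at a single $e_n$ need not detect $y$ globally, and one must exploit the connectivity of $T$ in the Habiro sense. This is handled by applying Theorem~4.1 of \cite{Ha1} to a $\Z$-connected subset of $T$ containing $n$, together with the isomorphism $\Q[q]^{\{n\}}\simeq\Q[e_n][[q-e_n]]$ established en route; together they force $y$ to coincide with the constant $c_0$ already located inside $\Q$.
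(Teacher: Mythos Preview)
Your local computation is fine and matches the paper's: in $\Q[e_n][[q-e_n]]$ the relation $y^b=1$ forces $y=\pm 1$ (and $y=1$ for odd $b$) via Lemma~\ref{0923} and the standard unit argument. The gap is the global step. The map $\TT_n$ is \emph{not} injective when $|T|>1$. Over $R=\Q$ no prime $p$ satisfies $\bigcap_k (p^k)=0$ in $R$ (every prime is already a unit), so no two distinct positive integers are adjacent in Habiro's sense and Theorem~4.1 of \cite{Ha1} is vacuous for $R=\Q$; there is no ``$\Z$-connected subset'' trick available inside $\Q[q]^T$. In fact, since distinct cyclotomic polynomials are pairwise coprime over $\Q$, the Chinese remainder theorem applied at each finite stage gives $\Q[q]^T\cong \prod_{n\in T}\Q[e_n][[q-e_n]]$, and your $\TT_n$ is merely projection onto a single factor of this product.

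The paper avoids this by arguing directly at the level of finite quotients, which is what the inverse limit means: one must show that the image of $y$ is $\pm 1$ in every ring $\Q[q]/(\Phi_{n_1}^{k_1}\cdots\Phi_{n_m}^{k_m})$ with $n_i\in T$. The Chinese remainder theorem over $\Q$ reduces this to the case $m=1$, i.e.\ to $\Q[q]/(\Phi_n^k)\cong \Q[e_n][x]/(x^k)$, where your power-series computation applies verbatim. So the fix is not connectivity but to run your local argument once for each $n\in T$ (equivalently, in every finite quotient) rather than at a single $n$.
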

\begin{proof}  It suffices to show
that for any $n_1, n_2 \dots n_m \in T$, the ring
$\Q[q]/(\Phi^{k_1}_{n_1}\dots\Phi^{k_m}_{n_m})$ does not
contains a $b$th root of $1$ except possibly for $\pm 1$.
Using the Chinese remainder theorem, it  suffices to
consider the case where $m=1$.

The ring $\Q[q]/(\Phi_{n}^k(q))$ is isomorphic to $\Q[e_n][x]/(x^k)$,
by Proposition \ref{h_kInjektive}. If

$$y= \sum_{j=0}^{k-1} a_j x^j, \quad  a_j \in \Q[e_n]$$
satisfies  $y^b=1$,
then it follows that $a_0^b=1$.
By Lemma \ref{0923} we have have $a_0=\pm1$. One can easily see that
$a_1=\dots =a_{k-1}=0$. Thus $y=\pm 1$.
\end{proof}

In contrast with Lemma \ref{0913}, we have
\begin{proposition}
\label{cor9}
For any odd positive $b$,  and any subset $T\subset \N_b$,
the ring $\Z[1/b][q]^{T}$
contains a unique $b$th root of $q$, which is invertible in
$\Z[1/b][q]^T$.

For any even positive $b$,  and any subset $T\subset \N_b$,
the ring $\Z[1/b][q]^{T}$
contains two $b$th roots of $q$, which are invertible in
$\Z[1/b][q]^T$; one is the negative of the other.
\end{proposition}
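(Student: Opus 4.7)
The plan is to combine Theorem \ref{frob} with Lemma \ref{0913}. By Theorem \ref{frob}, the Frobenius endomorphism $F_b: \Z[1/b][q]^{\N_b} \to \Z[1/b][q]^{\N_b}$, $q \mapsto q^b$, is an isomorphism; let $\alpha \in \Z[1/b][q]^{\N_b}$ be the unique preimage of $q$, so that $\alpha^b = q$. Since $T \subset \N_b$ implies $\Phi^*_T \subset \Phi^*_{\N_b}$, there is a natural algebra homomorphism $\Z[1/b][q]^{\N_b} \to \Z[1/b][q]^T$, and the image of $\alpha$ under it is still a $b$th root of $q$, which I will continue to call $\alpha$.

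Next I check that $\alpha$ is a unit. For this it suffices to show that $q$ itself is a unit in $\Z[1/b][q]^T$: in each finite quotient $\Z[1/b][q]/(\Phi_n^k(q))$ with $n \in T \subset \N_b$, the class of $q$ modulo $\Phi_n$ is either $1$ (if $n=1$) or a primitive $n$th root of unity, and hence a unit; the standard nilpotent lift (using that $\Phi_n(q)$ is nilpotent in $\Z[1/b][q]/(\Phi_n^k(q))$) then promotes this to a unit modulo $\Phi_n^k$. Passing to the inverse limit, $q$ is a unit in $\Z[1/b][q]^T$, and explicitly $\alpha^{-1} = q^{-1}\alpha^{b-1}$.

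For uniqueness, suppose $\beta \in \Z[1/b][q]^T$ also satisfies $\beta^b = q$. Then $\beta$ is likewise a unit, and $(\beta\alpha^{-1})^b = 1$. The natural map $\Z[1/b][q]^T \to \Q[q]^T$ is injective, since at each finite level $\Z[1/b][q]/(f) \hookrightarrow \Q[q]/(f)$ for the monic $f$'s involved (both sides are free of the same rank over $\Z[1/b]$ and $\Q$ respectively), and inverse limits preserve injections. Applying Lemma \ref{0913} to $\beta\alpha^{-1}$ viewed in $\Q[q]^T$ forces $\beta\alpha^{-1} = \pm 1$, and the sign must be $+1$ when $b$ is odd. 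Hence for odd $b$ the root $\alpha$ is unique; for even $b$ the roots are exactly $\pm\alpha$, which are distinct because $\alpha$ is a unit in a ring of characteristic zero.

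The only step requiring any genuine care is the embedding $\Z[1/b][q]^T \hookrightarrow \Q[q]^T$ that allows Lemma \ref{0913} to be applied, together with the invertibility of $q$ needed to form $\alpha^{-1}$; everything else is direct assembly of Theorem \ref{frob} and the prior lemma on $b$th roots of unity.
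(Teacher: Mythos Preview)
Your proof is correct and follows essentially the same approach as the paper: existence via the Frobenius isomorphism of Theorem~\ref{frob}, uniqueness via Lemma~\ref{0913}, and invertibility via invertibility of $q$. The only cosmetic differences are that you argue invertibility of $q$ by a nilpotent-lift at each finite level (the paper instead cites the explicit identity $q^{-1}=\sum_n q^n(q;q)_n$ from \cite{Ha1}), and you make the embedding $\Z[1/b][q]^T\hookrightarrow \Q[q]^T$ explicit where the paper invokes Lemma~\ref{0913} directly.
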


\begin{proof}
Let us first consider the case $T=\N_b$. Since $F_b$ is an isomorphism
by Theorem \ref{frob}, we can define a $b$th root of $q$ by
\[
q^{1/b}:=F^{-1}_b (q)  \in \calS_{b,0}\,.
\]
If $y_1$ and $y_2$ are two $b$th root of the same element, then
their ratio $y_1/y_2$ is a $b$th root of 1.
From Lemma \ref{0913} it follows that  if $b$ is odd, there is only one
$b$th root of $q$ in $\Z[1/b][q]^{\N_b}$, and if $b$ is even,
there are 2 such roots, one is the minus of the other.
We will denote them $\pm q^{1/b}$.

Further it is known that $q$ is invertible in $\Z[q]^\N$ (see
\cite{Ha1}). Actually, there is an explicit expression
 $q^{-1}=\sum_n q^n (q;q)_n $.
Hence  $q^{-1}\in \Z[1/b][q]^{\N_b}$,
since the natural homomorphism from $\Z[q]^\N$ to $\Z[1/b][q]^{\N_b}$
 maps $q$ to $q$.  In a commutative ring,
if $x\mid y$ and $y$ is invertible, then so is $x$.
Hence any root of $q$ is invertible.

In the general case of $T \subset \N_b$, we use the natural map
$\Z[1/b][q]^{\N_b}\hookrightarrow \Z[1/b][q]^T$.
\end{proof}

\subsection*{ Relation with \cite{Le}}
By Proposition \ref{cor9},
  $\calS_{b,0}$
is isomorphic to the ring $\Lambda^{\N_b}_b:=\Z[1/b][q^{1/b}]^{\N_b}$
used in \cite{Le}. Furthermore, our invariant $\pi_0 I_M$ and the one
defined in \cite{Le} belong to $\calS_{b,0}$.
This follows from Theorem \ref{main} for $b$ odd, and
from Proposition \ref{main-corProof}(c) for $b$ even.
Finally, the invariant defined in \cite{Le} for $M$ divided
by the invariant of $\#_i L(b^{k_i}_i,1)$ (which is invertible in $\calS_{b,0}$
\cite[Subsection 4.1]{Le}) coincides with
$\pi_0 I_M$ up to factor $q^{\frac{1-b}{4}}$ by
Theorem \ref{main}, \cite[Theorem 3]{Le} and
Proposition \ref{main-corProof}(b).


\subsection{Another Frobenius homomorphism}
We define  another Frobenius type algebra homomorphism. The difference
of the two types of Frobenius homomorphisms is in the target spaces of
these homomorphisms.

Suppose $m$ is a positive integer.
Define the algebra homomorphism
$$G_m : R[q]^T \to R[q]^{mT} \quad \text{ by } \qquad G_m(q) = q^m.$$
Since $\Phi_{mr}(q)$ always divides $\Phi_{r}(q^m)$,  $G_m$ is well--defined.

\subsection{Realization of $q^{a^2/b}$ in $\calS_p$}  \label{defxb}
Throughout this subsection,
let $p$ be a prime or $1$.
Suppose
$b= \pm p^l$ for an $l\in \N$
and let $a$ be an integer.
Let $B_{p,j} = G_{p^j}(\calS_{p,0})$. Note that $B_{p,j}\subset \calS_{p,j}$.
If $b$ is odd, by Proposition \ref{cor9}  there is a unique $b$th root of $q$ in
 $\calS_{p,0}$; we denote it by $x_{b;0}$. If $b$ is even, by Proposition
 \ref{cor9} there are exactly two $b$th root of $q$, namely $\pm q^{1/b}$. We put
 $x_{b;0}=q^{1/b}$.
We define an element $z_{b,a} \in \calS_p$ as follows.

If $b\mid a$, let $z_{b,a} := q^{a^2/b} \in \calS_p$.

If $ b=\pm p^l \nmid a$, then $z_{b,a}\in \calS_p$ is
defined by specifying its projections
 $\pi_j(z_{b,a}):=z_{b,a;j}\in \calS_{p,j}$  as follows.
Suppose $a = p^s e$, with $(e,p)=1$. Then $s < l$. For $j >s$ let
 $z_{b,a;j}:=0$.
For $ 0\le j \le s$ let
\[
z_{b,a;j} := [G_{p^j}(x_{b;0})]^{a^2/p^j} = [G_{p^j}(x_{b;0})]^{e^2 \, p^{2s-j}}
\in B_{p,j} \subset \calS_{p,j}.
\]


Similarly, for $b=\pm p^l$ we define an element $x_b \in \calS_p$ as follows.
We put $\pi_0(x_b): =x_{b;0}$.
 For $j<l$, $\pi_j(x_b):= [G_{p^j}(x_{b;0})]^{p^j}$.
If $j\geq l$, $\pi_j(x_b):=q^{b}$. Notice that for $c=(b,p^j)$ we have
\[
\pi_j(x_b)=z_{b,c;j}.
\]


\begin{proposition} \label{eval_z}
Suppose $\xi$ is a root of unity of order $r = c r'$,
 where $c= (r,b)$. Then
$$ \ev_{\xi}(z_{b,a}) = \begin{cases} 0 & \text{ if } c \nmid a \\
 (\xi^c)^{a_1^2 b'_*} &  \text{ if } a=ca_1,
\end{cases}$$
where $b'_*$ is the unique element in $\Z/r'\Z$ such that
$  b'_\ast (b/c) \equiv 1 \pmod{r'}$.
Moreover,
\[
\ev_\xi(x_b)=(\xi^c)^{b'_\ast}\; .
\]

\end{proposition}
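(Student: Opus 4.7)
\bigskip

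The plan is to reduce both evaluations to a single computation involving the unique $b$th root of $\xi^c$ that lives in the cyclic group $\langle \xi^c\rangle$, and then expand the various cases in the definition of $z_{b,a}$ and $x_b$.

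First, write $r = p^j r'$ with $(r',p) = 1$; then $c = (r,b) = p^{\min(j,l)}$. By Lemma \ref{1100}, $\ev_\xi(z_{b,a}) = \ev_\xi(\pi_j(z_{b,a}))$ and $\ev_\xi(x_b) = \ev_\xi(\pi_j(x_b))$, so it suffices to evaluate the $j$th components. Next, observe that for $f(q) \in \calS_{p,0}$ one has $\ev_\xi(G_{p^j}(f)) = \ev_{\xi^{p^j}}(f) = \ev_{\xi^c}(f)$, because $G_{p^j}$ substitutes $q \mapsto q^{p^j}$ while $\ev_\xi$ sends $q \mapsto \xi$. Applied to $x_{b;0}$, which satisfies $x_{b;0}^b = q$ in $\calS_{p,0}$, this gives an element $u := \ev_{\xi^c}(x_{b;0}) \in \Z[1/p][\xi^c]$ with $u^b = \xi^c$.

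The key point is that $u = (\xi^c)^{b'_*}$. Indeed, $\xi^c$ has order $r'$, and since $(b, r') = (\pm p^l, r') = 1$ there is a unique element $t \in \Z/r'$ with $t \cdot (b/c) \equiv 1 \pmod{r'}$, namely $t = b'_*$; then $(\xi^c)^{b'_*}$ is a $b$th root of $\xi^c$ because $(\xi^c)^{b'_* b} = (\xi^c)^{b'_* (b/c) \cdot c} = (\xi^c)^c$, and $(\xi^c)^c = \xi^{c^2}$ agrees with $(u^{b/c})^c \cdot $(unit) after noting that raising to $b = c \cdot (b/c)$ yields $\xi^c$. More cleanly, any two $b$th roots of $\xi^c$ differ by a $b$th root of unity in $\Q[e_{r'}]$; by Lemma \ref{0923} (applied with $n = r'$ coprime to $b$), the only such roots are $\pm 1$, and the sign is $1$ when $b$ is odd. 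This pins down $u = (\xi^c)^{b'_*}$.

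With this in hand, the case analysis for $z_{b,a}$ is mechanical. Write $a = p^s e$ with $(e,p) = 1$. If $b \mid a$ (so $s \ge l$), then $c \mid a$, $a = c a_1$, and $\pi_j(z_{b,a}) = q^{a^2/b}$, so
\[
\ev_\xi(z_{b,a}) = \xi^{a^2/b} = \xi^{c^2 a_1^2/b} = (\xi^c)^{c a_1^2/b} = (\xi^c)^{a_1^2 (b/c)^{-1}} = (\xi^c)^{a_1^2 b'_*},
\]
the third equality being valid modulo the order $r'$ of $\xi^c$. If $b \nmid a$ and $j > s$, then $c = p^{\min(j,l)} > p^s$ does not divide $a$, and $\pi_j(z_{b,a}) = 0$, giving $\ev_\xi(z_{b,a}) = 0$ as claimed. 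If $b \nmid a$ and $j \le s$, then $c = p^j$, $a = ca_1$, and
\[
\ev_\xi(z_{b,a}) = \ev_\xi\bigl([G_{p^j}(x_{b;0})]^{a^2/p^j}\bigr) = u^{a^2/c} = (\xi^c)^{b'_* \cdot c a_1^2} = (\xi^c)^{a_1^2 b'_*}
\]
after simplifying the exponent modulo $r'$. For $x_b$, the argument is even shorter: when $j \ge l$, $\pi_j(x_b) = q^b$ and $c = |b|$, so $\ev_\xi(x_b) = \xi^b = (\xi^c)^{\operatorname{sgn}(b)} = (\xi^c)^{b'_*}$; when $j < l$, $\pi_j(x_b) = [G_{p^j}(x_{b;0})]^{p^j}$ and $c = p^j$, so $\ev_\xi(x_b) = u^{p^j} = (\xi^c)^{b'_* c} = (\xi^c)^{b'_*}$ (again using $\operatorname{ord}(\xi^c) = r'$ and $(c, r') = 1$).

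The only delicate step is the identification $u = (\xi^c)^{b'_*}$: this is automatic for odd $b$ from Lemma \ref{0923}, but when $p = 2$ one has $b$th roots of unity $\pm 1$ in $\Q[e_{r'}]$, so there is a sign ambiguity in choosing the $b$th root $x_{b;0} = q^{1/b}$ of $q$ in $\calS_{p,0}$. One must verify that the fixed convention of Subsection \ref{defxb} is compatible with the sign appearing in $b'_*$; this is the main bookkeeping obstacle, but since the formula in the statement depends quadratically on $a_1$ and linearly on $b'_*$ for $x_b$, the sign is absorbed into the definition of $b'_* \pmod{r'}$.
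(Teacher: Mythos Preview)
Your overall strategy matches the paper's: reduce to the $j$th component via Lemma~\ref{1100}, then identify the evaluation using Lemma~\ref{0923}. But the central computation contains an error that invalidates the argument.

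You claim $u := \ev_{\xi^c}(x_{b;0}) = (\xi^c)^{b'_*}$. This is false whenever $c>1$. Since $x_{b;0}^b = q$, one has $u^b = \xi^c$; but $\bigl((\xi^c)^{b'_*}\bigr)^b = (\xi^c)^{b'_*(b/c)\cdot c} = (\xi^c)^c = \xi^{c^2}$, which is not $\xi^c$. (Your own sentence ``$(\xi^c)^c = \xi^{c^2}$ agrees with $(u^{b/c})^c\cdot(\text{unit})$\dots'' already signals that the computation does not close.) The correct value is $u = (\xi^c)^{b_*}$, where $b_*$ is the inverse of $b$ itself modulo $r'$; note $b_* \equiv b'_* \, c^{-1} \pmod{r'}$. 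With the right $u$ one gets directly
\[
u^{a^2/c} = (\xi^c)^{b_*\, c a_1^2} = (\xi^c)^{b'_* a_1^2},
\]
which is the desired formula. With your wrong $u$ you obtain $(\xi^c)^{b'_* c a_1^2}$, and your subsequent ``simplification modulo $r'$'' dropping the factor $c$ is illegitimate: $b'_* c a_1^2 \equiv b'_* a_1^2 \pmod{r'}$ would force $c\equiv 1 \pmod{r'}$, which fails in general (take $b=9$, $r=15$, $c=3$, $r'=5$). The same error recurs in your treatment of $x_b$ for $j<l$, where you write $u^c = (\xi^c)^{b'_* c} = (\xi^c)^{b'_*}$.

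The paper sidesteps computing $u$ altogether: it raises $\ev_\xi(z_{b,a;i})$ to the power $b/c$ (not $b$), observes that both this and $(\xi^c)^{a_1^2 b'_*}$ have $(b/c)$th power equal to $(\xi^c)^{a_1^2}$, and then invokes Lemma~\ref{0923} with $b/c$ in place of $b$. For even $b$ the sign ambiguity is resolved by checking $\ev_1(q^{1/b})=1$, which fixes the branch; your remark that ``the sign is absorbed into the definition of $b'_*$'' is not an argument, since $b'_*$ is uniquely determined.
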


\begin{proof} Let us compute $\ev_\xi(z_{b,a})$. The case of
 $\ev_\xi(x_b)$ is completely analogous.

If $b\mid a$, then $c\mid a$, and the proof is obvious.

Suppose $b\nmid a$. Let $a=p^s e$ and $c= p^i$. Then $s <l$.
Recall that $z_{b,a} = \prod_{j=0}^\infty z_{b,a;j}$. By Lemma \ref{1100},
$$ \ev_\xi(z_{b,a}) = \ev_\xi(z_{b,a;i}).$$

If $c\nmid a$, then $i > s$. By definition, $z_{b,a;i}=0$, hence
the statement holds true.

It remains the case $c\mid a$, or $ i \le s$. Note that $\zeta = \xi^c$
is a primitive root of order $r'$ and  $(p, r')=1$.
Since $z_{b,a;i} \in B_{p,i}$,
$$ \ev_\xi(z_{b,a;i}) \in \Z[1/p][\zeta].$$
From the definition of $z_{b,a;i}$ it follows that $(z_{b,a;i})^{b/c} =
(q^c)^{a^2/c^2}$, hence after evaluation we have
\[
[\ev_\xi(z_{b,a;i})]^{b/c} =  (\zeta)^{a^2_1}.
\]
Note also that
$$ [(\xi^c)^{a^2_1 b'_*}]^{b/c} = (\zeta)^{a^2_1}.$$
Using Lemma \ref{0923} we conclude $\ev_\xi(z_{b,a;i}) =
(\xi^c)^{a^2_1 b'_*}$ if $b$ is odd, and
$\ev_\xi(z_{b,a;i}) =(\xi^c)^{a^2_1 b'_*}$ or
$\ev_\xi(z_{b,a;i}) =-(\xi^c)^{a^2_1 b'_*}$
if $b$ is even. Since $\ev_{1}(q^{1/b})=1$ and therefore
$\ev_{\xi}(q^{1/b})=\xi^{b_*}$ (and not $-\xi^{b*}$) we get the claim.
\end{proof}


\section{Invariant of lens spaces}\label{diag}
The purpose of this section is to prove Lemma
   \ref{0079}.
Throughout this section we will use the following notations.

Let $a$ and $b$ be coprime integers.
Choose $\hat{a}$ and $\hat{b}$ such that  $b\hat{b}+a\hat{a}=1$ with $0<\sn(a)\hat{a}<|b|$.
Notice that for $a=1$ we have $\hat{1}=1$ and $\hat{b}=0$.

Let $r$ be a fixed odd integer (the order of $\xi$).
For $l\in\Z$ coprime to $r$, let $l_*$ denote the inverse of $l$ modulo $r$. If $(b,r)=c$, let $b'_*$ denote the inverse of $b':=\frac{b}{c}$ modulo $r':=\frac{r}{c}$. Notice that for $c=1$, we have $b_*=b'_*$.

Further, we denote by  $\left(\frac{x}{y}\right)$
  the Jacobi symbol and by $s(a,b)$ the Dedekind sum
(see e.g. \cite{KM-Dedekind}).

\subsection{Invariants of lens spaces}
Let us compute the $SO(3)$ invariant of the lens space $M(b,a;d)$.
Recall that $M(b,a;d)$ is the lens space $L(b,a)$ together with a knot
$K$ inside colored by $d$ (see Figure 1).

\begin{proposition}\label{1409}
 Suppose $c=(b,r)$ divides  $d-\sn(a)\hat{a}$. Then
\begin{align*}
\tau'_{M(b,a;d)}(\xi)&= (-1)^{\frac{c+1}{2}\,\frac{\sn(ab)-1}{2}}
\left(\frac{|a|}{c}\right)
\left(\frac{1-\xi^{-\sn(a)db'_*}}{1-\xi^{-\sn(b)b'_*}}\right)^{\chi(c)}
\xi^{
4_*u-4_*b'_* \frac{a(\hat{a}-\sn(a)d)^2}{c}
}
\end{align*}
where
\[
u=12s(1,b)-12 \sn(b)s(a,b)+
\frac{1}{b}\left(a(1-d^2)+2(\sn(a)d-\sn(b))
+a(\hat{a}-\sn(a)d)^2\right) \in \Z
\]
and  $\chi(c)=1$ if $c=1$ and is zero otherwise.
If
$c\nmid (\hat{a}\pm d)$, $\tau_{M(b,a;d)}(\xi)=0\; .$
\label{lens}
\end{proposition}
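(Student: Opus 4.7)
The plan is to compute $\tau_{M(b,a;d)}(\xi)$ directly from the surgery presentation in Figure~\ref{figure:LensSpaceKnot} and then divide by $\tau_{L(|b|,1)}(\xi)$, which is the special case $a=1$, $d=1$ of the same calculation. Concretely, write $b/a = [c_1,\dots,c_N]$ as a continued fraction so that $M(b,a;d)$ is integer surgery along a chain of unknots $U_1,\dots,U_N$ with framings $c_i$ and $c_i c_{i+1}$ linkings, together with the extra knot $K$ colored by $d$ hanging off $U_N$. Apply definition \eqref{def_qi} of $\tau$ to this presentation; the colored Jones polynomial of the chain plus $K$ factors as a product of Hopf-link $S$-matrix entries, so $F_{L\sqcup K_d}(\xi)$ becomes a multi-dimensional Gauss sum over odd $n_1,\dots,n_N$. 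This reduces the problem to evaluating and normalizing that Gauss sum.

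The next step is the one-variable Gauss sum computation. A standard Landsberg--Schaar style reciprocity gives, for odd $r$ and $b\ne 0$,
\[
{\sum_n}^\xi q^{b\frac{n^2-1}{4}} q^{an/2}
\;=\; (\text{sign/Jacobi factor})\,|\,\gamma_b(\xi)\,|\cdot \xi^{-a^2 b^*/4}
\]
when the divisibility condition $c \mid a$ holds (with $c=(b,r)$), and vanishes otherwise. Iterating this collapse on the chain variables $n_1,\dots,n_N$ yields a single Gauss sum over the last variable, the divisibility conditions cascading into the single condition $c\mid (d-\sn(a)\hat a)$, with the remainder being zero. The Jacobi symbol $\bigl(\tfrac{|a|}{c}\bigr)$ and sign factor $(-1)^{\frac{c+1}{2}\frac{\sn(ab)-1}{2}}$ emerge from the repeated quadratic-reciprocity term each elimination produces, which one sums using the multiplicative property of Jacobi symbols on the convergents of the continued fraction.

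The $\xi$-power in the conclusion is best obtained by the Jeffrey normalization. The $c_i$-dependent phases coming from the framings assemble, via the classical identity
\[
\sum_{i=1}^N c_i - 3\,\sgn\!\bigl(b/a\bigr) \;=\; 12\bigl(s(a,b)\bigr) + \tfrac{a+a^*}{b}\;\;\text{(up to sign conventions)},
\]
into the Dedekind sum combination $12s(1,b) - 12\sn(b)\,s(a,b)$ inside $u$; the remaining $\frac{1}{b}$-terms in $u$ arise from the framing contribution of $K_d$ and from completing the square in the final one-variable Gauss sum at level $a(\hat a - \sn(a) d)^2/c$. The exponent $4_*$ is automatic since $r$ is odd. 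Verify that for $d=1$, $a=1$ (so $\hat a = 1$, $u = 0$) one recovers the standard formula for $\tau_{L(b,1)}(\xi)$; dividing $\tau_{M(b,a;d)}(\xi)$ by $\tau_{L(|b|,1)}(\xi)$ to pass from $\tau$ to $\tau'$ then introduces the quotient of $1-\xi^{-\sn(a)db'_*}$ by $1-\xi^{-\sn(b)b'_*}$, but only when $c=1$, since for $c>1$ both numerator and denominator contain the same $|\gamma_b(\xi)|$ factor that cancels --- this is precisely the meaning of the indicator $\chi(c)$.

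The main obstacle I anticipate is not any single computation but keeping the sign conventions consistent through the reciprocity step: $\sn(a)$, $\sn(b)$, $\sgn$ in the signature correction, and the branch of the Jacobi symbol all interact, and the parity factor $(-1)^{\frac{c+1}{2}\frac{\sn(ab)-1}{2}}$ is the residue of this bookkeeping. The vanishing clause in the last sentence of the proposition is the cleanest part: if $c\nmid (\hat a - \sn(a) d)$ (equivalently, after a sign flip on $d$, $c\nmid(\hat a \pm d)$), the first divisibility cascade already forces the inner Gauss sum to vanish term-by-term, so $F_{L\sqcup K_d}(\xi)=0$ and hence $\tau_{M(b,a;d)}(\xi)=0$.
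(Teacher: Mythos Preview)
Your outline matches the paper's proof: continued-fraction surgery presentation as a Hopf chain, multi-variable Gauss sum over the chain colors, the Dedekind-sum identity $3n-\sum_i m_i = -12s(a,b) + (a+\hat a)/b$ for the phase, and division by $\tau_{L(|b|,1)}$; the paper outsources the iterated Gauss-sum evaluation to Li--Li \cite{LiLi} (their Lemmas 4.11, 4.12, 4.20) and treats the case $b<0$ by complex conjugation rather than by re-running the computation.

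One correction to your sketch: the indicator $\chi(c)$ is not produced by a $|\gamma_b|$ cancellation after dividing. The factor $(1-\xi^{-db'_*})^{\chi(c)}$ already sits in $\tau_{M(b,a;d)}$ itself, before any division. The Li--Li computation ends with a sum $\sum_{\pm}\chi^{\pm}(d)\,\xi^{(\cdots)^{\pm}}$ where $\chi^{\pm}(d)=\pm 1$ iff $c\mid(d\mp\hat a)$; when $c=1$ both signs survive and their combination is exactly $(1-\xi^{-db'_*})$, while for $c>1$ at most one sign survives (since $c\mid(d-\hat a)$ and $c\mid(d+\hat a)$ together would force $c\mid\hat a$, impossible) and no such factor appears. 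The same mechanism in $\tau_{L(|b|,1)}$ produces the denominator $(1-\xi^{-b'_*})^{\chi(c)}$.
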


In particular, it follows that $\tau_{L(b,a)}(\xi)=0$ if $c\nmid \hat{a}\pm 1$.

\begin{proof}
We consider first the case where
$b,a>0$. Since two lens spaces
$L(b,a_1)$ and $L(b,a_2)$ are homeomorphic if $a_1 \equiv a_2 \pmod{b}$, we can assume $a<b$.
Let $b/a$ be given by  a continued fraction
$$\frac{b}{a}=m_n-\frac{1}{\displaystyle m_{n-1}-\frac{1}{\displaystyle
    m_{n-2}-\dots \frac{1}{\displaystyle m_2-\frac{1}{\displaystyle
        m_1}}}}.
$$
Using the Lagrange identity
$$a-\frac{1}{b}=(a-1)+\frac{1}{\displaystyle 1+\frac{1}{\displaystyle
    (b-1)}}$$
we can  assume $m_i\geq 2$ for all $i$.
%

The $\tau_{M(b,a;d)}(\xi)$ can be computed in the same way as the invariant
$\xi_r(L(b,a),A)$ in \cite{LiLi}, after replacing
$A^2$ (respectively $A$) by $\xi^{2_*}$ (respectively $\xi^{4_*}$).
Representing the $b/a$--framed unknot in Figure \ref{figure:LensSpaceKnot}
by a Hopf chain (as
e.g.  in Lemma 3.1 of \cite{BL}), we have
\[
F_{L\sqcup K}(\xi,d)={\sum_{j_i}}^{\xi}\prod_{i=1}^{n}
q^{m_i  \frac{j_i^2-1}{4}}
\prod_{i=1}^{n-1} [j_i j_{i+1}]\cdot[j_n d][j_1]
=\frac{S_n(d)}{(\xi^{2_*}-\xi^{-2_*})^{n+1}}\cdot \xi^{-4_*\sum_{i=1}^{n}m_i}
\]
where
$$S_n(d)=
{\sum_{\substack{j_i=1 \\ \text{odd}}}^{2r}}
\xi^{\sum m_ij^2_i}
(\xi^{2_*j_1}-\xi^{-2_*j_1})
(\xi^{2_*j_1j_2}-\xi^{-2_*j_1j_2})\dots
(\xi^{2_*j_{n-1}j_n}-\xi^{-2_*j_{n-1}j_n})
(\xi^{2_*j_n d}-\xi^{-2_*j_n d})\, .$$


Using Lemmas  4.11, 4.12 and 4.20 of \cite{LiLi}\footnote{There are misprints
in Lemma 4.21:
$q^*\pm n$ should be replaced by $q^*\mp n$ for
$n=1,2$.}
(and replacing
 $e_r$ by $\xi^{4_*}$, $c_n$ by $c$, $N_{n,1}=p$ by $b$,
$N_{n-1,1}=q$  by $a$, $N_{n,2}=q^*$
 by $\hat{a}$ and $-N_{n-1,2}=p^*$ by $\hat{b}$), we get
\[
S_n(d)=(-2)^n (\sqrt{r} \epsilon(r))^n \sqrt{c} \epsilon(c)
\left(\frac{\frac{b}{c}}{\frac{r}{c}}\right)\left(\frac{a}{c}\right)
(-1)^{\frac{r-1}{2}\frac{c-1}{2}}
\cdot\sum_{\pm}\chi^{\pm}(d)
\xi^{-ca4_*b'_*
\left(\frac{d\mp \hat{a}}{c}\right)^2
\pm 2_*\hat{b}(d\mp \hat{a}) +4_*\hat{a} \hat{b}}
\]
where  $\chi^{\pm}(d)=\pm 1$ if
$c\mid d\mp \hat{a}$ and is zero otherwise.
Further $\epsilon(x)=1$ if
$x \equiv 1 \pmod 4$ and $ \epsilon(x)=I$ if $x \equiv 3 \pmod 4$.
This implies the second claim of the lemma.

Note that when $c=1$, both $\chi^\pm(d)$ are nonzero.
If $c>1$ and  $c \mid (d-\hat{a})$, $\chi^+(d)=1$, but $\chi^- (d)=0$.
 Indeed, for $c$ dividing $d-\hat{a}$,
$c \mid (d+\hat{a})$ if and only if $c\mid \hat{a}$ which is impossible, because
$c\mid b$ but $(b,\hat{a})=1$.


Inserting the last formula into the Definition \eqref{def_qi}
we get
\[
\tau_{M(b,a;d)}(\xi)=
\frac{S_n(d)}{\xi^{2_*}-\xi^{-2_*}}
\left(-2 \xi^{-3\cdot4_*} \sum_{j=1}^{r} \xi^{4_*j^2}\right)^{-n}
\xi^{-4_* \sum_{i=1}^{n}m_i}
\]
where we used that $\sigma_+=n$ and $\sigma_-=0$
(compare \cite[p. 243]{KM-Dedekind}).
From
$\sum_{j=1}^{r}\xi^{4_*j^2}=\epsilon(r)\sqrt{r}$,
we obtain
\[
\tau_{M(b,a;d)}(\xi)=(-1)^{\frac{(c-1)(r-1)}{4}}\epsilon(c)
\left(\frac{b'}{r'}\right)
\left(\frac{a}{c}\right)
\sqrt{c}
\; \frac{(1-\xi^{- db'_*})^{\chi(c)}}{\xi^{2_*}-\xi^{-2_*}}
\; \xi^{4_*(3n-\sum_i m_i)
-4_*\hat{b}(\hat{a}-2d)
-4_*b'_*\frac{a(d-\hat{a})^2}{c}}\,.
\]
Applying the following
formulas for the Dedekind sum (compare \cite[Theorem 1.12]{KM-Dedekind})
\be\label{Dedekind}
3n- \sum_i m_i=-12 s(a,b)+\frac{a+\hat{a}}{b}\;,
\;\;\;\;-3+b=12 s(1,b)-\frac{2}{b}
\ee
and dividing the formula for $\tau_{M(b,a;d)}(\xi)$ by
 the formula for $\tau_{L(b,1)}(\xi)$
we get
\[
\tau'_{M(b,a;d)}(\xi)=
\left(\frac{a}{c}\right)
\left(\frac{1-\xi^{-db'_*}}{1-\xi^{-b'_*}}\right)^{\chi(c)}
\xi^{4_*u-4_*b'_*\frac{a(d-\hat{a})^2}{c}}
\]
where
\[
u=
-12s(a,b)+12s(1,b)+
\frac{1}{b}\left(a+\hat{a}
-2
-\hat{b}b(\hat{a}-2d)\right).
\]
Notice, that $u\in\Z$.
Further observe, that by
using $a\hat{a}+b\hat{b}=1$, we get
\[
a+\hat{a}-2-\hat{b}b(\hat{a}-2d)=
2(d-1)+a(1-d^2)+a(\hat{a}-d)^2.
\]
This implies the result for  $0<a<b$.

To compute
$\tau_{M(-b,a;d)}(\xi)$, observe
 that $\tau_{M(b,-a;d)}=\tau_{M(-b,a;d)}$
is equal to the complex conjugate
of $\tau_{M(b,a;d)}$. The ratio
$$
\tau'_{M(-b,a;d)}(\xi)=\frac{\;\;\overline{\tau_{M(b,a;d)}(\xi)}\;\;}
{\tau_{L(b,1)}(\xi)}
$$
can be computed analogously. Using $\overline{\epsilon(c)}=(-1)^{\frac{c-1}{2}}\epsilon(c)$,
 we have for $a,b>0$
\[
\tau'_{M(-b,a,d)}(\xi) =(-1)^{\frac{c+1}{2}}
\left(\frac{a}{c}\right)
\left(\frac{1- \xi^{db'_*}}{1-\xi^{-b'_*}}\right)^{\chi(c)}
\xi^{4_*\tilde{u}+4_*b'_*\frac{a(d-\hat{a})^2}{c}}
\]
where
\[
\tilde{u}=12s(a,b)+12s(1,b)+\frac{1}{b}
\left(
-a-\hat{a}-2+\hat{b}b(\hat{a}-2d)
\right)
\]
Using $s(a,b)=s(a,-b)=-s(-a,b)$, we get the result.

\end{proof}

\begin{example*} For $b>0$, we have
\[
\tau'_{L(-b,1)}(\xi)
=
(-1)^{\frac{c+1}{2}-{\chi(c)}}\,
\xi^{2_*(b-3)+b_*\chi(c)}
\; .
\]
\end{example*}

\subsection{Proof of Lemma \ref{0079}}
Assume $b=\pm p^l$ and $p$ is prime.
We have to define the unified invariant of
  $M^\ve(b,a) := M(b,a;d(\ve))$,
where $d(0)=1$ and $d(\bz)$ is the smallest odd positive
integer such that $\sn(a)ad(\bz) \equiv 1 \pmod {b}$.
First observe  that such $d(\bz)$ always exists. Indeed,
if $p$ is odd, we can achieve this by adding $b$, otherwise
 the inverse of any odd number modulo $2^l$ is odd.

Recall that we denote the unique positive $b$th root of $q$
in $S_{p,0}$ by $q^{\frac{1}{b}}$.
We define the unified
invariant $I_{M^\ve(b,a)}\in \R_b$ as follows.
If $p\neq 2$, then $I_{M^\ve(b,a)}\in \calS_p$ is defined by specifying
its projections
\[
\pi _j I_{M^\ve(b,a)}
:=
\,
\begin{cases}
q^{3 s(1,b)-3\sn(b)\, s(a,b)}
&\text{if } j=0, \;\ve =0
\\&\\
(-1)^{\frac{p^j+1}{2}\,\frac{\sn(ab)-1}{2}}
\left(\frac{|a|}{p}\right)^j \,
q^{\frac{u'}{4}}
&\text{if } 0<j<l, \;\ve=\bz
\\&\\
(-1)^{\frac{p^l+1}{2}\,\frac{\sn(ab)-1}{2}}
\left(\frac{|a|}{p}\right)^l q^{\frac{u'}{4}}
&\text{if } j\geq l, \;\ve=\bz
\end{cases}
\]
where $u':=u-\frac{a(\hat{a}-\sn(a)d(\bz))^2}{b}$ and
 $u$ is defined in Proposition \ref{1409}.
 If $p=2$, then only  $\pi_0 I_{M(b,a)}\in \calS_{2,0}=\R_2$ is non--zero
and it is defined to be $q^{3s(1,b)-3 \sn(b)\; s(a,b)}$.

The $I_{M^{\varepsilon}(b,a)}$ is well--defined due to Lemma \ref{0622} below, i.e. all powers of $q$ in $I_{M^{\ve}(b,a)}$ are integers for $j>0$ or lie in $\frac{1}{b}\Z$ for $j=0$.
Further, for b odd (respectively even) $I_{M^\ve(b,a)}$ is invertible in
$\calS_{p}^{p,\varepsilon}$ (respectively $\R_p^{p,\varepsilon}$) since $q$ and $q^{\frac{1}{b}}$
are invertible in these rings.

In particular, for odd $b=p^l$, we have $I_{L(b,1)}=1$, and
\[
\pi _j I_{L(-b,1)}
=
\,
\begin{cases}
q^{\frac{b-3}{2}+\frac{1}{b}}
&\text{if } j=0
\\&\\
(-1)^{\frac{p^j+1}{2}}q^{\frac{b-3}{2}}
&\text{if } 0<j<l, \; p \text{ odd} \\&\\
(-1)^{\frac{p^l+1}{2}}q^{\frac{b-3}{2}}
&\text{if } j\geq l, \; p \text{ odd} \, .
\end{cases}
\]

It is left to show, that for any $\xi$ of order $r$ coprime with $p$, we have
\[
\ev_\xi ( I_{M^0(b,a)})=\tau'_{M^0(b,a)}(\xi)\,
\]
and if $r=p^j k$ with $j>0$, then
\[
\ev_\xi ( I_{M^{\bz}(b,a)})=\tau'_{M^\bz(b,a)}(\xi)\,.
\]
For $\ve=0$, this follows directly from Propositions \ref{eval_z} and
\ref{1409} with $c=d=1$.
For $\ve=\bz$, we have $c=(p^j,b)>1$ and
we get the claim by using
Proposition \ref{1409} and
\begin{equation} \label{0623}
\xi^{\frac{a(\hat{a}-\sn(a)d(\bz))^2}{b}}=
\xi^{c\,\frac{a(\hat{a}-\sn(a)d(\bz))^2}{bc}}=
\xi^{bb'_*\,\frac{a(\hat{a}-\sn(a)d(\bz))^2}{bc}}=
\xi^{b'_*\,\frac{a(\hat{a}-\sn(a)d(\bz))^2}{c}},
\end{equation}
where for the second equality we use $c\equiv bb'_*\pmod{r}$.
Notice that due to part (2) of Lemma \ref{0622} below,
$b$ and $c$ divide $\hat{a}-\sn(a)d(\bz)$ and therefore all powers of $\xi$
in (\ref{0623}) are integers.
\qed

The following Lemma is used in the  proof of Lemma \ref{0079}.
\begin{lemma} \label{0622}
We have
\begin{itemize}
\item[(a)] $ 3 s(1,b) - 3\sn(b)\, s(a,b) \in \frac{1}{b}\Z$,
\item[(b)] $b\mid \hat{a}-\sn(a)d(\bz)$ and therefore $u'\in\Z$, and
\item[(c)] $4\mid u'$ for $d=d(\bz)$.
\end{itemize}
\end{lemma}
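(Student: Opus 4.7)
The plan is to handle (b) by direct arithmetic, then (a) via an explicit sum formula for the Dedekind sum modulo $4$, and finally (c) by combining the two with the fact that $d(\bz)$ is odd.

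\textbf{Part (b).} The defining congruences $a\hat a\equiv 1\pmod b$ and $|a|d(\bz)\equiv 1\pmod b$ together yield $a\,d(\bz)\equiv\sn(a)\pmod b$, hence
\[
a\bigl(\hat a-\sn(a)d(\bz)\bigr)\equiv 1-\sn(a)^2=0\pmod b.
\]
Coprimality $\gcd(a,b)=1$ then gives $b\mid\hat a-\sn(a)d(\bz)$, so $b^2$ divides $(\hat a-\sn(a)d(\bz))^2$, and $\tfrac{a(\hat a-\sn(a)d(\bz))^2}{b}\in\BZ$. Since $u'=u-\tfrac{a(\hat a-\sn(a)d(\bz))^2}{b}$ and $u\in\BZ$ by Proposition~\ref{1409}, we conclude $u'\in\BZ$.

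\textbf{Part (a).} For positive coprime $a,b$ I will expand the definition of $s(a,b)$ using $((x))=x-\lfloor x\rfloor-\tfrac12$ together with the standard sums $\sum k=\tfrac{b(b-1)}{2}$, $\sum k^2=\tfrac{(b-1)b(2b-1)}{6}$, and $\sum_{k=1}^{b-1}\lfloor ka/b\rfloor=\tfrac{(a-1)(b-1)}{2}$, obtaining
\[
12b\,s(a,b)=2a(b-1)(2b-1)-12\sum_{k=1}^{b-1}k\lfloor ka/b\rfloor-3b(b-1).
\]
Specialization to $a=1$ gives $12b\,s(1,b)=(b-1)(b-2)$, and subtracting yields
\[
12b\bigl(s(1,b)-s(a,b)\bigr)=2(b-1)(2b-1)(1-a)+12\sum_{k=1}^{b-1}k\lfloor ka/b\rfloor.
\]
The second term lies in $4\BZ$ outright; for the first, $\gcd(a,b)=1$ forces at least one of $a,b$ to be odd, so one of $b-1,1-a$ is even, and together with the outer factor $2$ this puts $2(b-1)(2b-1)(1-a)$ in $4\BZ$. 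Hence $3b(s(1,b)-s(a,b))\in\BZ$, proving (a) for $a,b>0$. The remaining sign cases reduce to this via the conventions $s(a,-b)=s(a,b)$, $s(-a,b)=-s(a,b)$, combined with the elementary fact $6s(1,b)=\tfrac{(b-1)(b-2)}{2b}\in\tfrac1b\BZ$.

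\textbf{Part (c).} Multiplying the definition of $u'$ by $b$ produces
\[
bu'=b\bigl(12s(1,b)-12\sn(b)s(a,b)\bigr)+a\bigl(1-d(\bz)^2\bigr)+2\bigl(\sn(a)d(\bz)-\sn(b)\bigr).
\]
The first summand is in $4\BZ$ by part~(a); since $d(\bz)$ is odd, $d(\bz)^2\equiv 1\pmod 8$, so $a(1-d(\bz)^2)\in 8\BZ$; and $\sn(a)d(\bz)-\sn(b)$ is a difference of two odd integers, hence even, which puts $2(\sn(a)d(\bz)-\sn(b))$ in $4\BZ$. Therefore $bu'\in 4\BZ$, and since part~(c) is only invoked in Lemma~\ref{0079} when $p$ is an odd prime (so $b=\pm p^l$ is odd and coprime to $4$), cancelling $b$ modulo $4$ gives $4\mid u'$.

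The principal obstacle will be the mod-$4$ analysis in part~(a): neither $12b\,s(1,b)$ nor $12b\,s(a,b)$ need be divisible by $4$ individually, so the cancellation must be teased out from the explicit floor-sum formula, and the coprimality hypothesis is exactly what makes the parity argument work.
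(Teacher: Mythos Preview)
Your proof is correct and in fact more careful than the paper's own argument.

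For part (b) your reasoning is essentially identical to the paper's.

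For part (a) the paper simply cites the continued-fraction identities \eqref{Dedekind} without further explanation; you instead give a self-contained computation via the explicit floor-sum formula for $12b\,s(a,b)$ and a parity argument. This is a genuinely more elementary route: it avoids the continued-fraction machinery entirely, and your observation that coprimality of $a,b$ forces one of $b-1,\,1-a$ to be even is exactly the mod-$4$ cancellation that makes the difference $12b(s(1,b)-s(a,b))$ land in $4\BZ$ even though neither summand does individually. Your handling of the sign cases via $24b\,s(1,b)=2(b-1)(b-2)\in 4\BZ$ is terse but correct.

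For part (c) both you and the paper argue by showing $bu'\in 4\BZ$ term by term. The paper only records that $4\mid(1-d^2)$ and $4\mid 2(\sn(a)d-\sn(b))$ for odd $d$, leaving implicit both the Dedekind-sum contribution (which is part (a)) and the final division by $b$. You make both of these explicit, and you correctly flag that passing from $4\mid bu'$ to $4\mid u'$ requires $b$ odd. This is not just caution: the statement of (c) as written actually fails for even $b$ (e.g.\ $b=4$, $a=3$, $d(\bz)=3$ gives $u'=-2$), so your restriction to the case $p$ odd---which is indeed the only case where (c) is invoked in the proof of Lemma~\ref{0079}---is necessary, not merely convenient.
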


\begin{proof}
The first claim follows from the formulas (\ref{Dedekind})
for the Dedekind sum.
The second claim follows from the fact that $(a,b)=1$ and
\[
a(\hat{a}-\sn(a)d)=1-\sn(a)ad-b\hat{b}\equiv 0\pmod{b},
\]
since $d$ is chosen such that $\sn(a)ad\equiv 1\pmod{b}$.
For the third claim, notice that for odd $d$ we have
\[
4\mid (1-d^2) \;\;\text{ and }\;\;
4\mid 2(\sn(a)d-\sn(b)).
\]
\end{proof}


\section{Laplace transform} \label{laplace}
This section is devoted to the proof of Theorem \ref{0078}
by using Andrew's identity.
Throughout this section, let $p$ be a prime or $p=1$, and $b= \pm p^l$ for an $l\in \N$.
\subsection{Definition}
The Laplace transform is a
 $\Z[q^{\pm 1}]$--linear map
defined by
\begin{eqnarray*}
\cL_{b}: \Z[z^{\pm 1}, q^{\pm 1}] &\to& \calS_p \\
z^a &\mapsto& z_{b,a}.
\end{eqnarray*}
In particular, we put $\cL_{b;j}:=\pi_j \circ \cL_{b}$ and have $\cL_{b;j}(z^a)=z_{b,a;j} \in \calS_{p,j}$.

Further, for any
  $f \in \BZ[z^{\pm 1},q^{\pm 1}]$ and $n\in \Z$, we define
\[
\hat f:=f|_{z=q^n}\in \Z[q^{\pm n}, q^{\pm 1}]\,.
\]

\begin{lemma}
Suppose  $f \in \BZ[z^{\pm 1},q^{\pm 1}]$. Then
for a root of unity $\xi$ of odd order $r$,
\[
{\sum_{n}}^\xi q^{b\frac{n^2-1}{4}}  \hat f = \gamma_{b}(\xi) \,
\ev_\xi(\cL_{-b}(f)).
\]
\label{1001}
\end{lemma}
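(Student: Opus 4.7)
The plan is to reduce the identity to the case of a single monomial $f = z^{a}$ and then evaluate the resulting Gauss--type sum by completing the square. Both sides of the claim are $\Z[q^{\pm 1}]$--linear in $f$: the map $f \mapsto \sum_n^\xi q^{b(n^2-1)/4}\hat f$ is linear by inspection, and $\cL_{-b}$ is linear by construction. So it suffices to verify
\[
\sum_n^\xi \xi^{b(n^2-1)/4 + na} \;=\; \gamma_b(\xi)\, \ev_\xi(z_{-b,a}) \qquad (a \in \Z),
\]
where the right--hand side is explicitly given by Proposition~\ref{eval_z}.

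Set $c = (r,b)$, $r = cr'$, $b = cb'$, so that $(b',r')=1$. I will decompose each odd $n \in [0,2r)$ uniquely as $n = j + 2r'k$ with $j$ odd in $[0,2r')$ and $0\le k < c$. Because $\xi^{br'} = \xi^{b'r} = 1$ and $\xi^{br'^2}=1$, the quadratic factor $\xi^{b(n^2-1)/4}$ depends only on $j$, while $\xi^{na} = \xi^{ja}\,\eta^{2ka}$ with $\eta := \xi^{r'}$ a primitive $c$th root of unity. The geometric sum over $k$ yields $c$ if $c\mid a$ and $0$ otherwise. When $c\nmid a$ the left--hand side vanishes, and so does the right--hand side, by Proposition~\ref{eval_z}.

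When $c\mid a$, write $a = ca_1$. Both sides reduce to sums over odd $j\in [0,2r')$,
\[
\sum_n^\xi \xi^{b(n^2-1)/4+na} = c\sum_j \xi^{b(j^2-1)/4+ja}, \qquad \gamma_b(\xi) = c\sum_j \xi^{b(j^2-1)/4},
\]
so the task becomes showing that the ratio of the inner sums equals $\ev_\xi(z_{-b,a})$. Since $(b',2r')=1$ (for odd $p$ the element $b'$ is odd, and when $p=2$ the oddness of $r$ already forces $c=1$), I can pick an even integer $\alpha$ solving $b\alpha \equiv 2a \pmod{2r}$, namely $\alpha = 2a_1\beta$ with $\beta$ the inverse of $b'$ modulo $2r'$. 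The shift $j\mapsto j+\alpha$ preserves parity, and because $c\mid a$ it preserves the $2r'$--periodicity of the summand, so it leaves the inner sum invariant. Expanding $b(j+\alpha)^2/4 + (j+\alpha)a$ and using $b\alpha/2 - a \equiv 0\pmod r$ factors out the constant $\xi^{b\alpha^2/4 - \alpha a}$, giving $\sum_j \xi^{b(j^2-1)/4+ja} = \xi^{b\alpha^2/4-\alpha a}\sum_j \xi^{b(j^2-1)/4}$.

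It then remains to identify the prefactor. Writing $b\alpha = 2a + 2rt$, one gets $b\alpha^2/4 - \alpha a = (\alpha/2)(-a + rt)$, and reducing modulo $r$ while using $\beta \equiv b'_* \pmod{r'}$ produces $-c\,a_1^2\,b'_*\pmod r$. Since $(-b)'_* \equiv -b'_* \pmod{r'}$, this is exactly the exponent of $\ev_\xi(z_{-b,a})$ in Proposition~\ref{eval_z}, completing the identification. The main obstacle will be the careful modular bookkeeping across $r$, $r'$, and $2r'$: one must verify that $\alpha$ can be taken even, that $2r'$--periodicity really holds once $c\mid a$, and that the constant produced by completing the square matches the prescribed formula for $\ev_\xi(z_{-b,a})$ on the nose, not merely up to an unwanted sign.
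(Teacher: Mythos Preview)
Your approach is the same as the paper's: reduce by linearity to $f=z^a$, compute the Gauss sum to obtain the formula
\[
{\sum_n}^\xi q^{b\frac{n^2-1}{4}} q^{na} = \begin{cases} 0 & c\nmid a\\ (\xi^c)^{-a_1^2 b'_*}\gamma_b(\xi) & a=ca_1,\end{cases}
\]
and identify the right side with $\gamma_b(\xi)\ev_\xi(z_{-b,a})$ via Proposition~\ref{eval_z}. The paper simply cites \cite[Lemma~1.3]{BL} for the Gauss sum, whereas you supply the completion of the square yourself; the substance is identical.

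One slip: your justification that $(b',2r')=1$ fails when $p=2$, since then $c=1$ forces $b'=b=\pm 2^l$ to be even. The fix is painless: take $\beta$ to be an inverse of $b'$ modulo $r'$ (which always exists since $(b',r')=1$) rather than modulo $2r'$. With $\alpha=2a_1\beta$ you still get $\alpha$ even and
\[
b\alpha-2a=2ca_1(b'\beta-1)\in 2ca_1 r'\Z\subset 2r\Z,
\]
so $b\alpha\equiv 2a\pmod{2r}$ holds, and the remainder of your computation (the $2r'$--periodicity, the constant $-(\alpha/2)a\equiv -ca_1^2b'_*\pmod r$, and the match with Proposition~\ref{eval_z}) goes through unchanged.
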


\begin{proof}
It is sufficient to consider the case $f=z^a$.
Then, by the same arguments as in
the proof of \cite[Lemma 1.3]{BL},
we
have
\be\label{four}
{\sum_{n}}^\xi q^{b\frac{n^2-1}{4}}\,
q^{ na} = \begin{cases}
0
&\text{if $c\nmid  a$}\\
(\xi^c)^{-{a_1^2 b'_*}}\, \gamma_b(\xi)  & \text{if $a=ca_1$}.
 \end{cases}
\ee
The result follows now from Proposition \ref{eval_z}.


\end{proof}

\subsection{Proof of Theorem \ref{0078}}
Recall that
\[
A(n,k) = \frac{\prod^{k}_{i=0}
\left(q^{n}+q^{-n}-q^i -q^{-i}\right)}{(1-q) \, (q^{k+1};q)_{k+1}}.
\]
We have to show that there exists an element $Q_{b,k} \in \R_b $ such that
for every root of unity $\xi$ of odd order $r$ one has
\[
\frac{{\sum_n}^{\hspace{-1.8mm}\xi } \; q^{b\frac{n^2-1}{4}} A(n,k) }{F_{U^b}(\xi)}
= \ev_\xi (Q_{b,k}).
\]
Applying Lemma \ref{1001} to $F_{U^b}(\xi)={\sum\limits_ n}^\xi q^{b\frac{n^2-1}{4}}
[n]^2$, we get for $c=(b,r)$
\be\label{1122}
F_{U^b}(\xi)=2\gamma_b(\xi) \;\ev_{\xi}\left(\frac{(1-x_{-b})^{\chi(c)}}{(1-q^{-1})(1-q)}\right)
,\ee
where as usual, $\chi(c)=1$ if $c=1$ and is zero otherwise.
We will prove
that for an odd prime $p$ and
any number $j\geq 0$ there exists an element
$Q_k(q,x_b,j) \in \calS_{p,j}$ such that
\be\label{imp}
\frac{1}{(q^{k+1};q)_{k+1}}
\,\cL_{b;j}\left( \prod_{i=0}^k (z+z^{-1} - q^i -q^{-i})   \right)
=
2\, Q_k(q^{\sn(b)}, x_{b},j).
\ee
If $p=2$ we will prove the claim for $j=0$ only, since $\calS_{2,0}\simeq\cR_{2}$.
The case $p=\pm 1$ was done e.g. in \cite{BBL}.
Theorem \ref{0078} follows then from Lemma \ref{1001} and \eqref{1122}
where $Q_{b,k}$ is defined by its projections
\[
\pi_j Q_{b,k}:=\;
\frac{1-q^{-1}}{(1-x_{-b})^{\chi(p^j)}}\; Q_k(q^{-\sn(b)},x_{-b},j).
\]
We split the proof of (\ref{imp})
into two parts. In the first part we will show that
there exists an element $Q_{k}(q,x_b,j)$ such
 that  Equality \eqref{imp}
holds. In the second part we show that $Q_k(q,x_b,j)$
lies in $\calS_{p,j}$.

\subsection*{Part 1,  $b$ odd case}
Assume $b=\pm p^l$ with $p\not=2$.
We split the proof into several lemmas.

\begin{lemma}\label{S_{b;j}(k,q)}
For $x_{b;j}:=\pi_j(x_b)$ and $c=(b,p^j)$,
\[
\cL_{b;j}\left( \prod_{i=0}^k (z+z^{-1} - q^i -q^{-i})   \right)=
2\, (-1)^{k+1} \, \qbinom{2k+1}{k} \, S_{b;j}(k,q),
\]
where
\begin{equation}\label{unif-s}
S_{b;j}(k,q):=1+\sum_{n=1}^{\infty}\frac{q^{(k+1)cn}(q^{-k-1};q)_{cn}}
{(q^{k+2};q)_{cn}}
(1+q^{cn}) x_{b;j}^{n^2}.
\end{equation}
\end{lemma}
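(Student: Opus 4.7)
The plan is to rewrite the product in factored Pochhammer form, expand it as a symmetric Laurent polynomial in $z$, apply the Laplace transform termwise, and finally identify the result with $S_{b;j}(k,q)$ through a terminating $q$-hypergeometric coefficient identity. The reduction is purely formal; the genuine content is a single $q$-binomial summation.

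First I would use the elementary factorization $z+z^{-1}-q^i-q^{-i}=-q^{-i}(1-zq^i)(1-z^{-1}q^i)$ to obtain
\[
\prod_{i=0}^k(z+z^{-1}-q^i-q^{-i}) \;=\; (-1)^{k+1}q^{-k(k+1)/2}(z;q)_{k+1}(z^{-1};q)_{k+1} \;=:\; \sum_{N=-k-1}^{k+1} c_N\,z^N,
\]
a Laurent polynomial invariant under $z\mapsto z^{-1}$, so $c_N=c_{-N}$. Applying $\cL_{b;j}$ termwise and invoking the definition of $z_{b,a;j}$ from Section~\ref{defxb}---namely $z_{b,a;j}=0$ unless $c:=(b,p^j)$ divides $a$, while $z_{b,cn;j}=x_{b;j}^{n^2}$---the symmetry pairs $N=\pm cn$ and yields
\[
\cL_{b;j}\!\left(\prod_{i=0}^k(z+z^{-1}-q^i-q^{-i})\right) \;=\; c_0 + 2\sum_{n\ge 1} c_{cn}\,x_{b;j}^{n^2}.
\]

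Matching this against $2(-1)^{k+1}\qbinom{2k+1}{k}\,S_{b;j}(k,q)$ reduces the lemma to the universal coefficient identity
\[
c_N \;=\; (-1)^{k+1}\qbinom{2k+1}{k}\,\frac{q^{(k+1)N}(q^{-k-1};q)_N(1+q^N)}{(q^{k+2};q)_N}\quad(N\ge 1), \qquad c_0 \;=\; 2(-1)^{k+1}\qbinom{2k+1}{k}.
\]
To prove this, I would view both sides as polynomials in $y:=z+z^{-1}$ of degree $k+1$. The left-hand side manifestly vanishes at the $k+1$ distinct points $y=q^m+q^{-m}$ for $m=0,1,\dots,k$, and a short Pochhammer telescoping---using $(q^{-k-1};q)_{k+1} = (-1)^{k+1}q^{-(k+1)(k+2)/2}(q;q)_{k+1}$, $(q^{k+2};q)_{k+1}=(q;q)_{2k+2}/(q;q)_{k+1}$, and $\qbinom{2k+1}{k}=q^{-k(k+1)/2}(q;q)_{2k+1}/[(q;q)_k(q;q)_{k+1}]$---shows that the leading $y^{k+1}$ coefficient on the right is $1$, matching the monic left-hand side.

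Equality of the two polynomials then reduces to the vanishing of the right-hand side at the $k+1$ points $y=q^m+q^{-m}$; equivalently, after substituting $z=q^{\pm m}$, to the terminating balanced identity
\[
2 + \sum_{N=1}^{k+1}\frac{q^{(k+1)N}(q^{-k-1};q)_N(1+q^N)}{(q^{k+2};q)_N}\,(q^{mN}+q^{-mN}) \;=\; 0, \qquad m=1,\dots,k.
\]
This is the main obstacle. It is a $q$-Saalsch\"utz-type summation that can be proved by induction on $k$, or by specializing Jackson's very-well-poised $_6\phi_5$ sum, or, after reindexing, by the $q$-Chu--Vandermonde convolution. Once this vanishing is established, the coefficient identity follows and the lemma is immediate from the Laplace transform computation above.
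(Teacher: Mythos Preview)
Your reduction via the Laplace transform and the symmetry $c_N=c_{-N}$ is sound and parallels the paper's first step. The divergence is in how you establish the coefficient identity for $c_N$. The paper never goes through polynomial interpolation or a vanishing sum. Instead it rewrites the product as $-2$ times $z^{-k}(zq^{-k};q)_{2k+1}$ (using the $z\leftrightarrow z^{-1}$ invariance of $\cL_b$) and expands this single Pochhammer by the $q$--binomial theorem, obtaining the closed form
\[
z^{-k}(zq^{-k};q)_{2k+1}=(-1)^k\sum_{i=-k}^{k+1}(-1)^i\qbinom{2k+1}{k+i}z^i.
\]
After applying $\cL_{b;j}$ and pairing $i=\pm cn$, everything reduces to the elementary identity
\[
\qbinom{2k+1}{k+cn}+\qbinom{2k+1}{k-cn}=\qbinom{2k+1}{k}\,(-1)^{cn}q^{(k+1)cn}(1+q^{cn})\,\frac{(q^{-k-1};q)_{cn}}{(q^{k+2};q)_{cn}},
\]
which the paper checks in two lines of Pochhammer manipulation. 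No hypergeometric summation is needed at all.

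By contrast, you never compute $c_N$ explicitly; you guess the target form and propose to verify it by matching roots, which manufactures the ``main obstacle'' you then have to appeal to Saalsch\"utz or $_6\phi_5$ for. That vanishing sum is \emph{equivalent} to the coefficient identity you are trying to prove, so the plan is correct in principle but circular in spirit, and you leave the hardest step as a gesture rather than a proof. (There is also a slip: you need the $k+1$ points $m=0,1,\dots,k$, not $m=1,\dots,k$.) The direct $q$--binomial expansion avoids the obstacle entirely.
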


Observe that for  $n> \frac{k+1}{c}$ the term $(q^{-k-1};q)_{cn}$ is zero and therefore the sum in \eqref{unif-s} is finite.

\begin{proof}
Since $\cL_{b}$ is invariant under $z \to z^{-1}$ one has
\[
\cL_b\left(\prod_{i=0}^k (z+z^{-1} - q^i -q^{-i})\right) =
-2\cL_{b} (z^{-k}(zq^{-k};q)_{2k+1}),
\]
and the $q$--binomial theorem (e.g. see \cite{GR}, II.3) gives
\begin{equation}\label{qbinomial}
z^{-k}(zq^{-k};q)_{2k+1}=
(-1)^k \sum_{i=-k}^{k+1}(-1)^i\qbinom{2k+1}{k+i}z^i.
\end{equation}
Notice that $\cL_{b;j}(z^a)\not=0$ if and only if $c \mid a$. Applying $\cL_{b;j}$ to the RHS of (\ref{qbinomial}), only the terms with
$c \mid i$ survive and therefore
\[
\cL_{b;j}\left(z^{-k}(zq^{-k};q)_{2k+1}\right)=
(-1)^{k} \sum_{n=-\lfloor k/c \rfloor}^{\lfloor (k+1)/c \rfloor} (-1)^{cn}
\qbinom{2k+1}{k+cn} z_{b,cn;j}.
\]
Separating the case $n=0$ and combining positive and negative $n$ this is equal to
\[
(-1)^{k}\qbinom{2k+1}{k}
+(-1)^k \sum_{n=1}^{\lfloor (k+1)/c \rfloor} (-1)^{cn}\left(\qbinom{2k+1}{k+cn}+\qbinom{2k+1}{k-cn}\right) z_{b,cn;j},
\]
where we use the convention that $\qbinom{x}{-1}$ is put to be zero for positive $x$.
Further,
\[
\qbinom{2k+1}{k+cn} +\qbinom{2k+1}{k-cn} =\frac{\{k+1\}}{\{2k+2\}}\qbinom{2k+2}{k+cn+1}(q^{cn/2}+q^{-cn/2})
\]
and
\[
\frac{\{k+1\}}{\{2k+2\}}\qbinom{2k+2}{k+cn+1}\qbinom{2k+1}{k}^{-1}=
(-1)^{cn}q^{(k+1)cn+\frac{cn}{2}}\frac{(q^{-k-1};q)_{cn}}{(q^{k+2};q)_{cn}}
.
\]
Using $z_{b,cn;j}=(z_{b,c;j})^{n^2}=x_{b;j}^{n^2}$ we get the result.
\end{proof}

To define $Q_k(q,x_b,j)$ we will need
Andrew's identity (3.43) of \cite{And}:
\begin{eqnarray*}
&&\hspace{-6mm}\sum_{n\geq 0} (-1)^n\alpha_n t^{-\frac{n(n-1)}{2}+sn+Nn} \frac{(t^{-N})_n}{(t^{N+1})_n}
\prod_{i=1}^{s}\frac{(b_i)_n (c_i)_n}{b_i^nc_i^n(\frac{t}{b_i})_n(\frac{t}{c_i})_n}
=\\
&&
\hspace{-4mm}
\frac{(t)_N(\frac{q}{b_sc_s})_N}{(\frac{t}{b_s})_N(\frac{t}{c_s})_N}
\sum_{n_s\geq \cdots\geq n_2\geq n_1 \geq 0}
\beta_{n_1}
\frac{t^{n_s}(t^{-N})_{n_s}(b_s)_{n_s}(c_s)_{n_s}}{(t^{-N} b_s c_s)_{n_s}}
\prod_{i=1}^{s-1}
\frac{t^{n_i}}{b_i^{n_{i}} c_i^{n_{i}}}
\frac{(b_i)_{n_{i}}(c_i)_{n_{i}}}{(\frac{t}{b_i})_{n_{i+1}}(\frac{t}{c_i})_{n_{i+1}}}
\frac{(\frac{t}{b_i c_i})_{n_{i+1}-n_i}}{(t)_{n_{i+1}-n_i}}\;.
\end{eqnarray*}
Here and in what follows we use the notation $(a)_n:=(a;t)_n$ .
The special Bailey pair $(\alpha_n,\beta_n)$ is chosen as follows
\[
\begin{array}{rclcrcl}
\alpha_0&=&1, &&\alpha_n&=&(-1)^nt^{\frac{n(n-1)}{2}}(1+t^n)\\
\beta_0&=&1, &&\beta_n&=&0 \;\;\;\text{ for } n\geq 1.
\end{array}
\]

\begin{lemma}\label{LHS}
$S_{b;j}(k,q)$ is equal to the LHS of Andrew's identity
with the parameters fixed below.
\end{lemma}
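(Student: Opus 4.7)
The plan is to exhibit explicit values for the parameters $t$, $N$, $s$, $b_i$, $c_i$ in Andrew's identity so that, after substitution, its left-hand side reduces to \eqref{unif-s} term by term. The proof is then a verification using standard manipulations of $q$-Pochhammer symbols.

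Several features of \eqref{unif-s} already dictate the substitution. The factor $(1+q^{cn})$ must arise from the factor $(1+t^n)$ in $\alpha_n$, so $t=q^c$. The truncation of $S_{b;j}(k,q)$ at $cn\leq k+1$ must match the truncation $(t^{-N})_n=0$ for $n>N$, essentially fixing $N=\lceil(k+1)/c\rceil$. The quotient $(q^{-k-1};q)_{cn}/(q^{k+2};q)_{cn}$ decomposes, via the classical identity
\[
(x;q)_{cn}=\prod_{m=0}^{c-1}(xq^m;q^c)_n,
\]
into $c$ Pochhammer symbols in base $t=q^c$; one of these is supplied by $(t^{-N})_n/(t^{N+1})_n$, and the remaining $c-1$ by letting the first $c-1$ of the pairs $(b_i,c_i)$ take the finite values $q^{-k-1+m}$ and $q^{k+2+m}$ for $m=1,\dots,c-1$. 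The quadratic factor $x_{b;j}^{n^2}$ is produced by sending additional pairs to infinity, using the classical limit $(b;t)_n/b^n\to(-1)^n t^{n(n-1)/2}$ as $b\to\infty$; each such pair contributes $t^{n(n-1)}=q^{cn(n-1)}$. The linear correction $q^{(k+1)cn}$ is then absorbed into the $t^{sn+Nn}$ prefactor by an appropriate choice of $s$.

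The main obstacle is the quadratic factor. For $j\geq l$ one has $x_{b;j}=q^b\in\Z[q^{\pm1}]$, so the matching is an integer-exponent computation that closely parallels the case $p=\pm1$ already treated in \cite{BBL}. For $0\leq j<l$, however, $x_{b;j}$ represents a formal $b$-th root of $q$ living in $\calS_{p,j}$, a fractional power of $q$, whereas the limits in Andrew's identity produce only integer powers of $t=q^c$. To bridge this gap, some of the $b_i$'s themselves must involve $x_{b;j}$ (for instance, taking values of the form $q^a x_{b;j}^{\pm 1}$), with their partners $c_i$ then sent to infinity so that the surviving limit yields the required fractional power in $n^2$. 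Once the parameter list has been written down explicitly, the remainder of the argument is a direct computation matching successive terms of the two sums.
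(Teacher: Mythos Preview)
Your plan has a genuine gap at the crucial step, the choice $t=q^c$.  With that choice, after sending $b_s,c_s\to\infty$ the only quadratic exponent produced on the left of Andrew's identity is $t^{n^2}=q^{cn^2}$, an integer power of $q$.  But in \eqref{unif-s} the quadratic factor is $x_{b;j}^{n^2}$, and for $0\le j<l$ one has $x_{b;j}^{b'}=q^c$ with $b'=|b|/c>1$, so $x_{b;j}$ is a genuine $b'$th root of $t$ and $x_{b;j}^{n^2}$ is not a power of $t$.  Your proposed fix---putting $x_{b;j}$ into some finite $b_i$'s and sending the partners $c_i\to\infty$---cannot produce it: the limit $c_i\to\infty$ always yields $(-1)^n t^{n(n-1)/2}$, a power of $t$, while the finite $b_i$ contributes only the \emph{linear} factor $b_i^{-n}$ together with a ratio of Pochhammer symbols.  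No finite collection of such contributions gives a quadratic exponent in $x_{b;j}$.

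The paper's remedy is to take $t=x_{b;j}$ instead, so that $t^{n^2}=x_{b;j}^{n^2}$ comes for free.  The cost is that $(q^{-k-1};q)_{cn}/(q^{k+2};q)_{cn}$ must now be rewritten in base $t$; since $t^{b'}=q^c$ this needs, on top of the splitting $(x;q)_{cn}=\prod_{m=0}^{c-1}(xq^m;q^c)_n$ you already use, a further factorisation over a primitive $b'$th root of unity $\omega$, namely $(q^{yc};q^c)_n=\prod_{i=0}^{b'-1}(\omega^i t^y;t)_n$.  Accordingly most of the $b_i,c_i$ carry factors of $\omega$.  A second point you would miss with $t=q^c$: the $\alpha_n$ supplies $(1+t^n)$, not $(1+q^{cn})=(1+t^{b'n})$; the discrepancy is absorbed by an extra block of parameters $b_{g+i}=-\omega^i t$, $c_{g+i}=-\omega^{-i}t$, for which $\prod_i \frac{(-\omega^{\pm i}t)_n}{(-\omega^{\pm i})_n}=\frac{1+t^{b'n}}{1+t^n}$.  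With these choices (and $N=k+1$, not $\lceil(k+1)/c\rceil$), the verification becomes the direct computation you describe.
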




\begin{proof}
Since
\[
 S_{b;j}(k,q)=S_{-b;j}(k,q^{-1}),
\]
it is enough to look at the case when $b>0$. Define $b':=\frac{b}{c}$ and
let $\omega$ be a $b'$th primitive root of unity. For simplicity,
put $N:=k+1$ and $t:=x_{b;j}$.
Using the following identities
\begin{eqnarray*}
(q^{y};q)_{cn}&=&\prod_{l=0}^{c-1}(q^{y+l};q^c)_n\\
(q^{yc};q^c)_n&=&\prod_{i=0}^{b'-1}(\omega^i t^y;t)_n,
\end{eqnarray*}
where the later is true due to $t^{b'}=x_{b;j}^{b'}=q^c$ for all $j$, and
choosing a $c$th root of $t$ denoted by $t^{\frac{1}{c}}$
we can see that
\[
S_{b;j}(k,q)=1+\sum_{n=1}^{\infty}
\prod_{i=0}^{b'-1}\prod_{l=0}^{c-1}
\frac{
(\omega^i t^{\frac{-N+l}{c}})_{n}
}{
(\omega^i t^{\frac{N+1+l}{c}})_{n}
}
(1+t^{b'n})t^{n^2+b'Nn}.
\]

Now we choose the parameters for Andrew's identity as follows.
We put
 $a:=\frac{c-1}{2}$, $d:=\frac{b'-1}{2}$ and $m:=\lfloor \frac{N}{c}\rfloor$.
For $l\in\{1,\ldots,c-1\}$ there exist unique $u_l,v_l\in\{0,\ldots,c-1\}$ such that
$u_l\equiv N+l\pmod{c}$ and $v_l\equiv N-l\pmod{c}$. Note that $v_l=u_{c-l}$. We define
$U_l:=\frac{-N+u_l}{c}$ and $V_l:=\frac{-N+v_l}{c}$. Then
$U_l, V_l \in \frac{1}{c}\Z$ but $U_l+V_l \in \Z$.
We define
\[
\begin{array}{rclrcll}
b_l&:=&t^{U_l}, &c_l&:=&t^{V_l} & \text{for }\; l=1,\ldots, a, \\
b_{a+i}&:=&\omega^it^{-m},
&c_{a+i}&:=&\omega^{-i}t^{-m}
& \text{for }\;i=1,\ldots,d, \\
b_{a+ld+i}&:=&\omega^it^{U_l},
&c_{a+ld+i}&:=&\omega^{-i}t^{V_l}
& \text{for }\;i=1,\ldots,d \text{ and } l=1,\ldots, c-1, \\
b_{g+i}&:=&-\omega^{i}t, &c_{g+i}&:=&-\omega^{-i}t &\text{for }\;i=1,\ldots,d,\\
b_{s-1}&:=&t^{-m}, &c_{s-1}&:=&t^{N+1}, &\\
b_s&\rightarrow&\infty,&c_s&\rightarrow&\infty.&\\
\end{array}
\]
where $g=a+cd$ and $s=(c+1)\frac{b'}{2}+1$.

We now calculate the LHS of Andrew's identity.
Using the notation
\[
(\omega^{\pm 1} t^x)_n=(\omega t^x)_n(\omega^{-1} t^x)_n
\]
and the identities
\[
\lim_{c\to \infty} \frac{(c)_n}{c^n}=(-1)^n t^{\frac{n(n-1)}{2}} \;\;\text{ and }\;\; \lim_{c\to\infty} \left(\frac{t}{c}\right)_n=1
\]
we get
\begin{eqnarray*}
LHS&=&1+
\sum_{n\geq 1}
t^{n(n-1+s+N-y)}\;
(1+t^n)\frac{(t^{-N})_n}{(t^{N+1})_n}
\cdot
\prod_{l=1}^{a}
	\frac{
		(t^{U_l})_n (t^{V_l})_n
	}{
		(t^{1-U_l})_n (t^{1-V_l})_n
	}	
\cdot
\prod_{i=1}^{d}
	\frac{
		(\omega^{\pm i} t^{-m})_n
	}{
		(\omega^{\pm i} t^{1+m})_n
	}
\\	&& \hspace{3cm}
\cdot\prod_{i=1}^{d}\prod_{l=1}^{c-1}
	\frac{
		(\omega^i t^{U_l})_n
		(\omega^{-i}t^{V_l})_n
	}{
	(\omega^{-i} t^{1-U_l})_n
	(\omega^{i}t^{1-V_l})_n	
	}
\cdot
\prod_{i=1}^{d}
	\frac{
		(-\omega^{\pm i}t)_n
	}{
		(-\omega^{\pm i})_n
	}
\cdot
\frac{
	(t^{-m})_n (t^{N+1})_n
}{
	(t^{1+m})_n (t^{-N})_n
}
\end{eqnarray*}
where
 \[
 y:=
\sum_{l=1}^{a} (U_l+V_l)+\sum_{i=1}^{d}\sum_{l=1}^{c-1}(U_l+V_l)-m(2d+1)+2d+1+N.
\]
Since
$\sum_{l=1}^{c-1}(U_l+V_l)=2\sum_{l=1}^{a}(U_l+V_l)=2(-N+m+\frac{c-1}{2})$ and
$2d+1=b'$, we have
\[
n-1+s+N-y=n+Nb'.
\]
Further,
\[
\prod_{i=1}^{d}	\frac{(-\omega^{\pm i}t)_n}{(-\omega^{\pm i})_n}
=
\prod_{i=1}^{b'-1}\frac{1+\omega^i t^n}{1+\omega^i}=\frac{1+t^{b'n}}{1+t^n}	
\]
and
\begin{eqnarray*}
&&
\prod_{l=1}^{a}
	\frac{
		(t^{U_l})_n (t^{V_l})_n
	}{
		(t^{1-U_l})_n (t^{1-V_l})_n
	}	
\cdot
\prod_{i=1}^{d}
	\frac{
		(\omega^{\pm i} t^{-m})_n
	}{
		(\omega^{\pm i} t^{1+m})_n
	}
\cdot
\prod_{i=1}^{d}\prod_{l=1}^{c-1}
	\frac{
		(\omega^i t^{U_l})_n
		(\omega^{-i}t^{V_l})_n
	}{
	(\omega^{-i} t^{1-U_l})_n
	(\omega^{i}t^{1-V_l})_n	
	}
\cdot
\frac{
	(t^{-m})_n
}{
	(t^{1+m})_n
}
\\
&&\hspace{105mm}=
\prod_{i=0}^{b'-1}\prod_{l=0}^{c-1}
\frac{
(\omega^i t^{\frac{-N+l}{c}})_{n}
}{
(\omega^i t^{\frac{N+1+l}{c}})_{n}
}.
\end{eqnarray*}
Taking all the results together, we see that the LHS is equal to
$S_{b;j}(k,q)$.

\end{proof}

Let us now calculate the RHS of Andrew's identity with parameters chosen as above.
For simplicity, we put $\delta_j:=n_{j+1}-n_j$.
Then the RHS is given by
\begin{eqnarray*}
RHS
&=&
(t)_N\sum_{n_s\geq \cdots\geq n_2 \geq n_1=0}
\frac{
t^{x}\cdot (t^{-N})_{n_s}(b_s)_{n_s}(c_s)_{n_s}
}{
\prod_{i=1}^{s-1} (t)_{\delta_{i}}(t^{-N}b_sc_s)_{n_s}
}
\cdot
\frac{
(t^{-m})_{n_{s-1}}(t^{N+1})_{n_{s-1}}
(t^{m-N})_{\delta_{s-1}}
}{
(t^{m+1})_{n_s}(t^{-N})_{n_s}
}
\\
&&\hspace{7mm}
\cdot
\prod_{l=1}^{a}
\frac{
(t^{U_l})_{n_l}(t^{V_l})_{n_l}
(t^{1-U_l-V_l})_{\delta_{l}}
}{
(t^{1-U_l})_{n_{l+1}}(t^{1-V_l})_{n_{l+1}}
}
\cdot
\prod_{i=1}^{d}
\frac{
(\omega^{\pm i}t^{-m})_{n_{a+i}}
(t^{2m+1})_{\delta_{a+i}}
}{
(\omega^{\pm i}t^{m+1})_{n_{a+i+1}}
}
\frac{
(-\omega^{\pm i}t)_{n_{g+i}}
(t^{-1})_{\delta_{g+i}}
}{
(-\omega^{\pm i})_{n_{g+i+1}}
}
\\
&&\hspace{7mm}
\cdot
\prod_{i=1}^{d}\prod_{l=1}^{c-1}
\frac{
(\omega^it^{U_l})_{n_{a+ld+i}}
(\omega^{-i}t^{V_l})_{n_{a+ld+i}}
(t^{1-U_l-V_l})_{\delta_{a+ld+i}}
}{
(\omega^{-i}t^{1-U_l})_{n_{a+ld+i+1}}
(\omega^{i}t^{1-V_l})_{n_{a+ld+i+1}}
}
\end{eqnarray*}
where
\begin{eqnarray*}
x
&=&\sum_{l=1}^{a}(1-U_l-V_l)\,n_l
+\sum_{i=1}^{d}(2m+1)\,n_{a+i}
\\&&\hspace{2cm}
+\sum_{i=1}^{d}\sum_{l=1}^{c-1}(1-U_l-V_l)\, n_{a+ld+i}
-\sum_{i=1}^{d}n_{g+i}+(m-N)\,n_{s-1}+n_s.
\end{eqnarray*}
For $c=1$ or $d=0$,
we use the convention that empty products  are set to be 1 and empty sums are set to be zero.

Let us now have a closer look at the RHS.
Notice, that
\[
\lim_{b_s,c_s\to\infty}\frac{
(b_s)_{n_s}(c_s)_{n_s}
}{
(t^{-N}b_sc_s)_{n_s}
}
=
(-1)^{n_s}t^{\frac{n_s(n_s-1)}{2}}t^{Nn_s}.
\]
The term $(t^{-1})_{\delta_{g+i}}$ is zero unless
 $\delta_{g+i}\in \{0,1\}$.
Therefore, we get
\[
\prod_{i=1}^{d}\frac{
(-\omega^{\pm i}t)_{n_{g+i}}
}{
(-\omega^{\pm i})_{n_{g+i+1}}
}
=
\prod_{i=1}^{d}
(1+\omega^{\pm i}t^{n_{g+i}})^{1-\delta_{g+i}}.
\]
Due to the term $(t^{-m})_{n_s}$, we have
$n_s\leq m$ and therefore $n_i\leq m$ for all $i$.
Multiplying the numerator and denominator of each term of the RHS by
\begin{eqnarray*}
&&
\prod_{l=1}^{a} (t^{1-U_l+n_{l+1}})_{m-n_{l+1}}(t^{1-V_l+n_{l+1}})_{m-n_{l+1}}
\prod_{i=1}^{d} (\omega^{\pm i}t^{m+1+n_{a+i+1}})_{m-n_{a+i+1}}
\\
&&\hspace{35mm}\cdot\prod_{i=1}^{d}\prod_{l=1}^{c-1}
(\omega^{-i}t^{1-U_l+n_{a+ld+i+1}})_{m-n_{a+ld+i+1}}
(\omega^{i}t^{1-V_l+n_{a+ld+i+1}})_{m-n_{a+ld+i+1}}
\end{eqnarray*}
gives in the denominator
$\prod_{i=0}^{b'-1}\prod_{l=1}^{c-1}(\omega^it^{1-U_l})_m
\cdot
\prod_{i=1}^{b'-1}(\omega^it^{m+1})_m$.
%
%
This is equal to
\[
\prod_{l=1}^{c-1}(t^{b'(1-U_l)};t^{b'})_m
\cdot \frac{(t^{b'(m+1)};t^{b'})_m}{(t^{m+1};t)_m}
=
\frac{(q^{N+1};q)_{cm}}{(t^{m+1};t)_m}.
\]
Further,
\[
(t)_N(t^{N+1})_{n_{s-1}}=(t)_{N+n_{s-1}}=(t)_{m}(t^{m+1})_{N-m+n_{s-1}}.
\]
The term $(t^{-N+m})_{\delta_{s-1}}$ is zero unless $\delta_{s-1}\leq N-m$
and therefore
\[
\frac{(t^{m+1})_{N-m+n_{s-1}}}{(t^{m+1})_{n_s}}
=
(t^{m+1+n_s})_{N-m-\delta_{s-1}}.
\]
Taking the above calculations into account,
we get
\begin{equation}\label{RHS}
RHS=
\frac{(t;t)_{2m}}{(q^{N+1};q)_{cm}}
\cdot T_k(q,t)
\end{equation}
where
\begin{eqnarray*}
T_k(q,t)&:=&
\sum_{n_s\geq \cdots\geq n_2 \geq n_1=0}
(-1)^{n_s}t^{x'}\cdot
(t^{-m})_{n_{s-1}}\cdot
(t^{m+1+n_s})_{N-m-\delta_{s-1}}
\cdot\frac{
(t^{-N+m})_{\delta_{s-1}}
}{
\prod_{i=1}^{s-1} (t)_{\delta_i}
}
\\
&&\hspace{3mm}
\cdot
\prod_{l=1}^{a}(t^{1-U_l-V_l})_{\delta_l}
\cdot
\prod_{i=1}^{d} (t^{2m+1})_{\delta_{a+i}}(t^{-1})_{\delta_{g+i}}
\cdot
\prod_{i=1}^{d}\prod_{l=1}^{c-1}(t^{1-U_l-V_l})_{\delta_{a+ld+i}}
\\
&&\hspace{3mm}
\cdot\prod_{l=1}^{a}
(t^{U_l})_{n_l}(t^{V_l})_{n_l}
(t^{1-U_l+n_{l+1}})_{m-n_{l+1}}
(t^{1-V_l+n_{l+1}})_{m-n_{l+1}}
\cdot
\prod_{i=1}^{d}
(1+\omega^{\pm i}t^{n_{g+i}})^{1-\delta_{g+i}}
\\
&&\hspace{3mm}
\cdot
\prod_{i=1}^{d}
(\omega^{\pm i}t^{-m})_{n_{a+i}}
(\omega^{\pm i}t^{m+1+n_{a+i+1}})_{m-n_{a+i+1}}
\cdot
\prod_{i=1}^{d}
\prod_{l=1}^{c-1}
(\omega^{i}t^{U_l})_{n_{a+ld+i}}
(\omega^{-i}t^{V_l})_{n_{a+ld+i}}
\\
&&\hspace{3mm}
\cdot
\prod_{i=1}^{d}
\prod_{l=1}^{c-1}
(\omega^{-i}t^{1-U_l+n_{a+ld+i+1}})_{m-n_{a+ld+i+1}}
(\omega^{i}t^{1-V_l+n_{a+ld+i+1}})_{m-n_{a+ld+i+1}}
\end{eqnarray*}
and
$x':=x+\frac{n_s(n_s-1)}{2}+Nn_s$.

We  now define the element $Q_{k}(q,x_b,j)$   by
\[
Q_{k}(q,x_b,j):=
\left((-1)^{k+1}q^{-\frac{k(k+1)}{2}}\right)
^{\frac{1+\sn(b)}{2}}
\left(q^{(k+1)^2}\right)^{\frac{1-\sn(b)}{2}}
\frac{(x_{b;j};x_{b;j})_{2m}}{(q;q)_{N+cm}}
\;T_k(q,x_{b;j}).
\]

By Lemmas \ref{S_{b;j}(k,q)} and \ref{LHS}, Equation (\ref{RHS}) and the following Lemma \ref{bPosbNeg},
we  see that this element
satisfies Equation (\ref{imp}).

\begin{lemma}\label{bPosbNeg}
The following formula holds.
\[
(-1)^{k+1}\qbinom{2k+1}{k}(q^{k+1};q)_{k+1}^{-1}=
(-1)^{k+1}\frac{q^{-k(k+1)/2}}{(q;q)_{k+1}}
=
\frac{q^{-(k+1)^2}}{(q^{-1};q^{-1})_{k+1}}
\]
\end{lemma}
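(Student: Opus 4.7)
The plan is to verify both equalities by direct $q$--algebraic manipulation, converting everything to the standard Pochhammer symbols $(q;q)_n$.

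For the first equality, I would start by rewriting $\{n\} = q^{n/2} - q^{-n/2} = -q^{-n/2}(1-q^n)$, which gives the closed form
$$\{n\}! \;=\; (-1)^n\, q^{-n(n+1)/4}\,(q;q)_n.$$
Substituting this into $\qbinom{2k+1}{k} = \{2k+1\}!/(\{k\}!\{k+1\}!)$ and collecting the powers of $q$ (the exponent reduces to $-k(k+1)/2$) yields
$$\qbinom{2k+1}{k} \;=\; q^{-k(k+1)/2}\,\frac{(q;q)_{2k+1}}{(q;q)_k\,(q;q)_{k+1}}.$$
Then I would use the elementary factorization $(q;q)_{2k+1} = (q;q)_k\,(q^{k+1};q)_{k+1}$ to see that $(q;q)_{2k+1}/\bigl((q;q)_k(q;q)_{k+1}\bigr) = (q^{k+1};q)_{k+1}/(q;q)_{k+1}$, from which the first equality is immediate after multiplication by $(q^{k+1};q)_{k+1}^{-1}$.

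For the second equality, I would convert $(q^{-1};q^{-1})_{k+1}$ to $(q;q)_{k+1}$ using $1-q^{-j} = -q^{-j}(1-q^j)$, which gives
$$(q^{-1};q^{-1})_{k+1} \;=\; (-1)^{k+1}\,q^{-(k+1)(k+2)/2}\,(q;q)_{k+1}.$$
Plugging this into $q^{-(k+1)^2}/(q^{-1};q^{-1})_{k+1}$ and simplifying the exponent $-(k+1)^2 + (k+1)(k+2)/2 = -k(k+1)/2$ yields exactly the middle expression.

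There is no real obstacle here; the lemma is a bookkeeping identity and the only care needed is to track the fractional powers of $q$ and the alternating signs correctly. The entire argument is a one-paragraph computation once the two reductions $\{n\}! = (-1)^n q^{-n(n+1)/4}(q;q)_n$ and $1-q^{-j} = -q^{-j}(1-q^j)$ are in place.
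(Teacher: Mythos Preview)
Your proof is correct and takes essentially the same approach as the paper: both arguments reduce to the direct conversion between the $\{n\}!$ factorials and the Pochhammer symbols $(q;q)_n$. The paper condenses this into the single hint $(q^{k+1};q)_{k+1}=(-1)^{k+1} q^{(3k^2+5k+2)/4}\{2k+1\}!/\{k\}!$, which is just a rearrangement of your identity $\{n\}! = (-1)^n q^{-n(n+1)/4}(q;q)_n$.
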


\begin{proof}
This is an easy calculation using
\[
(q^{k+1};q)_{k+1}=(-1)^{k+1} q^{(3k^2+5k+2)/4}\frac{\{2k+1\}!}{\{k\}!}.
\]
\end{proof}


\subsection*{Part 1, $b$ even case.}
Let  $b=\pm 2^l$. We have to prove Equality (\ref{imp}) only for $j=0$, i.e.
we have to show
\[
\frac{1}{(q^{k+1};q)_{k+1}}
\,\cL_{b;0}\left( \prod_{i=0}^k (z+z^{-1} - q^i -q^{-i})   \right)
=
2\, Q_k(q^{\sn(b)}, x_{b},0).
\]
The calculation works similar  to the odd case. Note that we have $c=1$
here.
This case was already done in \cite{BL} and \cite{Le}.
Since their approaches are slightly different and
for the sake of completeness, we will give the parameters
for Andrew's identity and the formula for $Q_{k}(q,x_{b},0)$ nevertheless.

We put
$t:=x_{b;0}$, $d:=\frac{b}{2}-1$, $\omega$ a $b$th root of unity
and choose a primitive square root $\nu$ of $\omega$. Define the parameters of Andrew's identity by
\[
\begin{array}{rclrcll}
b_{i}&:=&\omega^it^{-N},
&c_{i}&:=&\omega^{-i}t^{-N}
&  \;\text{for }\;i=1,\ldots,d, \\
b_{d+i}&:=&-\nu^{2i-1}t, &c_{d+i}&:=&-\nu^{-(2i-1)}t &\;\text{for }\;i=1,\ldots,d+1,\\
b_{b}&:=&-t^{-N},
&c_{b}&:=&-t^0=-1, &\\
b_{s-1}&:=&t^{-N}, &c_{s-1}&:=&t^{N+1}, &\\
b_s&\rightarrow&\infty,&c_s&\rightarrow&\infty,&
\end{array}
\]
where $s=b+2$.
Now we can define the element
\[
Q_{k}(q,x_b,0):=
\left((-1)^{k+1}q^{-\frac{k(k+1)}{2}}\right)
^{\frac{1+\sn(b)}{2}}
\left(q^{(k+1)^2}\right)^{\frac{1-\sn(b)}{2}}
\frac{(x_{b;0};x_{b;0})_{2N}}{(q;q)_{2N}}
\frac{1}{(-x_{b;0};x_{b;0})_N} T_k(q,x_{b;0})
\]
where
\begin{eqnarray*}
T_k(q,t)
&:=&
\sum_{n_{s-1}\geq \cdots \geq n_1=0}
(-1)^{n_{s-1}}t^{x''}
\cdot
\frac{
\prod_{i=1}^{d}
(t^{2N+1})_{\delta_{i}}
\cdot\prod_{i=1}^{d+1}(t^{-1})_{\delta_{d+i}}
\cdot
(t^{N+1})_{\delta_b}
}{
\prod_{i=1}^{s-2}(t)_{\delta_i}
}
\\
&&\hspace{13mm}
\cdot
(t^{-N})_{n_{s-1}}
\cdot (-t^{N+1+n_{s-1}})_{N-n_{s-1}}
\cdot(-t^{-N})_{n_{b}}
\cdot (-t)_{n_{b}-1}
\cdot (-t^{n_{s-1}+1})_{N-n_{s-1}}
\\
&&\hspace{13mm}
\cdot
\prod_{i=1}^{d}
(\omega^{\pm i}t^{-N})_{n_{i}}
(\omega^{\pm i}t^{N+1+n_{i+1}})_{N-n_{i+1}}
\cdot
\prod_{i=1}^{d+1}(1+\nu^{\pm(2i-1)}t^{n_{d+i}})^{1-\delta_{d+i}}
\end{eqnarray*}
and
$
x'':=\sum_{i=1}^{d}(2N+1)n_i-\sum_{i=1}^{d+1}n_{d+i}+\frac{n_{s-1}(n_{s-1}-1)}{2}+(N+1)(n_b+n_{s-1})$.
We use the notation $(a;b)_{-1}=\frac{1}{1-ab^{-1}}$.


\subsection*{Part 2}
We have to show  that $Q_{k}(q,x_b,j) \in\calS_{p,j}$,
where $j \in \N\cup\{0\}$ if $p$ is odd, and $j=0$ for $p=2$.
The following two lemmas do the proof.

\begin{lemma}
For $t=x_{b;j}$,
\[
T_k(q,t)\in \Z[q^{\pm 1}, t^{\pm1}].
\]
\end{lemma}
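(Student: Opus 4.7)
The strategy is to verify term-by-term that each summand of the finite sum defining $T_k(q,t)$ is a Laurent polynomial in $q$ and $t$. Finiteness of the sum follows from the factor $(t^{-m};t)_{n_{s-1}}$, which vanishes unless $n_{s-1}\le m$; combined with the chain $0=n_1\le n_2\le\cdots\le n_s$ and the Pochhammer $(t^{m+1+n_s};t)_{N-m-\delta_{s-1}}$, this bounds every $n_i$.

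First, I would eliminate the dependence on the primitive $b'$th root of unity $\omega$. Every $\omega$-factor in $T_k$ appears in a conjugate pair $\omega^{\pm i}$ indexed by $i=1,\dots,d$, where $b'=2d+1$ in the odd case. Using $t^{b'}=q^c$ together with the cyclotomic identity $\prod_{i=0}^{b'-1}(1-\omega^i y)=1-y^{b'}$, each such product reduces as
\[
\prod_{i=1}^{d}(\omega^{\pm i}t^\alpha;t)_n \;=\; \prod_{i=1}^{b'-1}(\omega^i t^\alpha;t)_n \;=\; \frac{\prod_{k=0}^{n-1}(1-q^{c(\alpha+k)})}{(t^\alpha;t)_n},
\]
turning the $\omega$-products into a ratio of a Laurent polynomial in $q$ by a single $t$-Pochhammer. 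The even case uses the square root $\nu$ with $\nu^2=\omega$ and the factorization of $1-(-y)^{b'}$ in essentially the same way.

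Second, I would pair each of the $s-1$ denominator factors $(t;t)_{\delta_i}$ with exactly one numerator $t$-Pochhammer of matching length $\delta_i$ provided by the formula for $T_k$: $(t^{1-U_l-V_l};t)_{\delta_l}$ for $l=1,\dots,a$ (noting that $U_l+V_l\in\Z$ because $u_l+v_l\equiv 2N\pmod c$ and $u_l+v_l\in\{0,\dots,2c-2\}$); $(t^{2m+1};t)_{\delta_{a+i}}$ for $i=1,\dots,d$, which is a genuine $t$-binomial coefficient; $(t^{1-U_l-V_l};t)_{\delta_{a+ld+i}}$ similarly; $(t^{-1};t)_{\delta_{g+i}}$ for $i=1,\dots,d$, which restricts $\delta_{g+i}\in\{0,1\}$ (both cases are visibly Laurent); and $(t^{-N+m};t)_{\delta_{s-1}}$, for which the inequality $\delta_{s-1}\le N-m$ forces all factors to have negative exponent, so after the rewriting $1-t^{-k}=-t^{-k}(1-t^k)$ the ratio becomes a $t$-binomial coefficient times a Laurent monomial. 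Then the single $t$-Pochhammer denominators $(t^\alpha;t)_n$ produced in the first step are absorbed by the ``positive'' numerator Pochhammers already present, namely $(t^{U_l};t)_{n_l}$, $(t^{V_l};t)_{n_l}$, and the reductions of $(\omega^{\pm i}t^{-m};t)_{n_{a+i}}$, $(\omega^{\pm i}t^{m+1+n_{a+i+1}};t)_{m-n_{a+i+1}}$, etc. After these cancellations, the residual factors, including the explicit powers $t^{x'}$ and the signs $(-1)^{n_s}$, are manifestly Laurent polynomials in $q$ and $t$ with integer coefficients, so $T_k(q,t)\in\Z[q^{\pm 1},t^{\pm 1}]$.

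The main obstacle is the bookkeeping: one must check carefully that every denominator $(t;t)_{\delta_i}$ is matched with a suitable numerator Pochhammer, that all $\omega$-dependence is cancelled after the first reduction step, and that the degenerate parameter ranges (e.g.\ $\delta_{g+i}\in\{0,1\}$, or $\delta_{s-1}\le N-m$, or $n_s\le m$) impose precisely the restrictions needed for the remaining Pochhammer ratios to be Laurent polynomials. The even case $b=\pm 2^l$ runs along the same lines with the analogous indexing via $\nu$.
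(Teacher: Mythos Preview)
Your proposal has a genuine gap in the first step. You claim that the $\omega$-factors can be collapsed via the cyclotomic identity
\[
\prod_{i=1}^{d}(\omega^{\pm i}t^\alpha;t)_n \;=\; \prod_{i=1}^{b'-1}(\omega^i t^\alpha;t)_n \;=\; \frac{(q^{c\alpha};q^c)_n}{(t^\alpha;t)_n},
\]
but this identity requires a \emph{common} Pochhammer length $n$ across all $i=1,\dots,d$. In $T_k$ that is not the case: for instance the factor $\prod_{i=1}^d(\omega^{\pm i}t^{-m};t)_{n_{a+i}}$ has length $n_{a+i}$ depending on $i$, and the factors $(\omega^{i}t^{U_l};t)_{n_{a+ld+i}}$ have both $i$-dependent lengths and asymmetric exponents $U_l\neq V_l$. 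For a fixed $i$ the pair $(1-\omega^i y)(1-\omega^{-i}y)=1-(\omega^i+\omega^{-i})y+y^2$ still contains $\omega^i+\omega^{-i}$, which is not in $\Z[q^{\pm1},t^{\pm1}]$ when $b'>3$. The same problem afflicts the fractional powers: the individual factors $(t^{U_l};t)_{n_l}$ with $U_l\in\frac{1}{c}\Z\setminus\Z$ are not Laurent in $t$, and your pairing argument does not explain how they disappear term-by-term.

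The paper sidesteps this entirely. It first uses only the coarse divisibility $(t;t)_n\mid (t^a;t)_n$ to conclude that each summand, hence $T_k$, lies in the larger ring $\Z[t^{\pm 1/c},\omega]$. Then it invokes the identity
\[
T_k(q,t)=\frac{(q^{N+1};q)_{cm}}{(t;t)_{2m}}\,S_{b;j}(k,q),
\]
where $S_{b;j}(k,q)$ is by its very definition \eqref{unif-s} a finite sum in $\Z[q^{\pm1},t^{\pm1}]$. Thus $T_k=f_0/g_0$ with $f_0,g_0\in\Z[q^{\pm1},t^{\pm1}]$, and since a Laurent polynomial in $t^{1/c},\omega$ that equals such a ratio cannot actually involve $\omega$ or fractional powers of $t$, one gets $T_k\in\Z[q^{\pm1},t^{\pm1}]$. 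The key point you are missing is this second, global expression for $T_k$ coming from the original $S_{b;j}$; it is what eliminates $\omega$ and $t^{1/c}$ without any term-by-term bookkeeping.
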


\begin{proof}
Let us first look at the case $b$ odd and positive.
Since for $a\not=0$, $(t^a)_n$ is always divisible by $(t)_n$,
it is easy to see that the denominator of each term of $T_k(q,t)$
divides its numerator.
Therefore we proved that $T_k(q,t)\in\Z[t^{\pm 1/c},\omega]$.
Since
\be\label{Sbj}
S_{b;j}(k,q)=
\frac{(t;t)_{2m}}{(q^{N+1};q)_{cm}}
\cdot T_k(q,t),
\ee
there are $f_0,g_0\in\Z[q^{\pm 1},t^{\pm 1}]$
such that $T_k(q,t)= \frac{f_0}{g_0}$. This implies that
$T_k(q,t)\in \Z[q^{\pm 1}, t^{\pm 1}]$
since $f_0$ and $g_0$ do not depend on $\omega$ and
the $c$th root of $t$.

The proofs for the even and the negative case work similar.

\end{proof}

\begin{lemma}
For $t=x_{b;j}$,
\[
\frac{(t;t)_{2m}}{(q;q)_{N+cm}}
\frac{1}{((-t;t)_{N})^{\lambda}}
\in\calS_{p,j}
\]
where $\lambda=1$ and $j=0$ if $p=2$,  and $\lambda=0$ and
$j\in \N\cup \{0\}$ otherwise.
\end{lemma}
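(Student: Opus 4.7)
My plan is to verify the divisibility by matching $\Phi_n$-adic valuations at each $n \in p^j \N_p$, then to invoke the cyclotomic completion structure of $\calS_{p,j}$.

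By Lemma \ref{0911}, for $d \notin p^j \N_p$ the polynomial $\Phi_d(q)$ is coprime in $\Z[1/p][q]$ to every $\Phi_n$ with $n \in p^j\N_p$, hence is a unit in $\calS_{p,j}$. Using the standard factorization $(q;q)_{N+cm} = \prod_d \Phi_d(q)^{\lfloor (N+cm)/d\rfloor}$, it thus suffices to show that for each $n \in p^j\N_p$ one has $v_{\Phi_n}((t;t)_{2m}) = \lfloor (N+cm)/n\rfloor$, and separately that $(-t;t)_N$ is a unit when $\lambda = 1$.

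To compute these valuations I will work locally by base-changing to $\Q(e_n)[q]_{(q-e_n)}$ with uniformizer $x = q - e_n$; the $\Phi_n$-adic valuation in $\calS_{p,j}$ agrees with the $x$-adic valuation here (Galois-equivariantly, as $t$ is global). The relation $t^{b'} = q^c$ with $c = (b, p^j)$ and $b' = b/c$ lifts by Hensel's lemma from the initial value $\zeta := e_n^{cb'_*}$ of order $r' := n/c$ (cf.\ Proposition \ref{eval_z}), yielding $t = \zeta(1 + x/e_n)^{c/b'}$; this makes sense precisely because $c/b' = \pm p^{2j-l}$ is a unit in $\Z[1/p]$. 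Expanding, one finds $1 - t^i = -i(c/b')\zeta^i x/e_n + O(x^2)$ whenever $\zeta^i = 1$, with a nonzero leading coefficient in $\Q(e_n)$; thus $v_x(1 - t^i) = 1$ iff $r' \mid i$, and $=0$ otherwise. Summing gives $v_{\Phi_n}((t;t)_{2m}) = \lfloor 2m/r'\rfloor$. Writing $N = cm + s$ with $0 \leq s < c$ and $n = cr'$ yields
\[
\left\lfloor \frac{N+cm}{n}\right\rfloor \;=\; \left\lfloor \frac{2m}{r'} + \frac{s}{cr'}\right\rfloor \;=\; \left\lfloor \frac{2m}{r'}\right\rfloor,
\]
the last equality because $s/(cr') < 1/r'$ and $2m \bmod r' \leq r' - 1$ force the fractional correction to stay strictly below unity. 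The valuations match.

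For the $p = 2$, $j = 0$ case with $\lambda = 1$, we additionally need $(-t;t)_N$ to be a unit: here $c = 1$ and $r' = n$ is odd for every $n \in \N_2$, so $\zeta$ has odd order and $\zeta^i \neq -1$ for all $i$, giving $1 + \zeta^i \neq 0$ and hence $v_{\Phi_n}((-t;t)_N) = 0$. Therefore $(-t;t)_N \in \calS_{2,0}^\times$, and the full expression lies in $\calS_{2,0} \simeq \R_2$. The principal obstacle is the Hensel step, specifically that $c/b' = \pm p^{2j - l}$ be a unit in $\Z[1/p]$ so that the linear coefficient $-i(c/b')\zeta^i/e_n$ of $1 - t^i$ is genuinely a unit in $\Q(e_n)$; without inverting $p$, $p$-adic obstructions in the binomial expansion of the Hensel lift would spoil the valuation count and the divisibility would fail.
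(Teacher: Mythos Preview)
Your approach is genuinely different from the paper's, and the valuation computation via the Hensel lift of $t$ is a nice idea. However, there is a real gap in the passage from pointwise valuations to membership in $\calS_{p,j}$, and it becomes fatal in the $p=2$ case.

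For the odd-$p$ part, your computation in $\Q(e_n)[[x]]$ does give $v_x((t;t)_{2m}) = a_n := \lfloor (N+cm)/n\rfloor$, and via the injection $\Z[1/p][q]/(\Phi_n^{a_n}) \hookrightarrow \Q(e_n)[x]/(x^{a_n})$ of Proposition~\ref{h_kInjektive} this forces $(t;t)_{2m}$ to vanish in $\Z[1/p][q]/(\Phi_n^{a_n})$. But you then need the non-obvious fact that $\ker\bigl(\calS_{p,j}\to \Z[1/p][q]/(B)\bigr)=B\,\calS_{p,j}$ for $B=\prod_n\Phi_n^{a_n}$; this holds, but requires an argument (write $f_g=B\tilde h_g$ for each $g\in\Phi^*_{p^j\N_p}$ divisible by $B$, check the $\tilde h_g$ form a compatible system modulo $g/B$). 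You skip this entirely.

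The serious problem is in the $p=2$ case. Your claim ``$\zeta^i\neq -1$ for all $i$, hence $v_{\Phi_n}((-t;t)_N)=0$, therefore $(-t;t)_N\in\calS_{2,0}^\times$'' is a non sequitur: having $x$-adic valuation zero in $\Q(e_n)[[x]]$ only says the image in $\Z[1/2][e_n]$ is \emph{nonzero}, not that it is a \emph{unit}. (Compare: $3\in\calS_{2,0}$ has valuation zero at every $n\in\N_2$ but is certainly not invertible.) You would need the extra arithmetic fact that $1+\zeta^i$ is a unit in $\Z[1/2][\zeta]$ for $\zeta$ of odd order, which you never invoke. Without invertibility of $(-t;t)_N$, your valuation bookkeeping does not produce an element of $\calS_{2,0}$.

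The paper avoids both issues by inverting the problem: it writes $(q;q)_{N+cm}=\widetilde{(q;q)}_{N+cm}\cdot(q^c;q^c)_{2m}$ and observes that $\frac{(q^c;q^c)_{2m}}{(t;t)_{2m}}=\frac{(t^{b'};t^{b'})_{2m}}{(t;t)_{2m}}$ is a genuine polynomial in $t$, hence an honest element of $\calS_{p,j}$. Then one only has to check that this element, together with $\widetilde{(q;q)}_{N+cm}$ and $(-t;t)_N$, is a unit modulo each $\Phi_{p^jn}(q)$; coprimality propagates multiplicatively to arbitrary products, so invertibility in the completion follows immediately. The trade-off: the paper's route needs no Hensel lift or Taylor analysis, while yours gives more precise local information but must then bridge the gap back to integrality in $\calS_{p,j}$, which you have not done.
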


\begin{proof}
Notice that
\[
(q;q)_{N+cm}=\widetilde{(q;q)}_{N+cm}(q^c;q^c)_{2m},
\]
where we use the notation
\[
\widetilde{(q^a;q)}_{n}:=
\prod_{\substack{j=0\\c\nmid (a+j)}}^{n-1}(1-q^{a+j}).
\]
We have to show
that
\[
\frac{(q^c;q^c)_{2m}}{(t;t)_{2m}}
\cdot
\widetilde{(q;q)}_{N+cm}\cdot
((-t;t)_{N})^{\lambda}
\]
is invertible in
$\Z[1/p][q]$ modulo
any ideal  $(f)=(\prod_{n} \Phi^{k_n}_n (q))$ where $n$ runs
through a subset of $p^j\N_p$.
Recall that in a commutative ring $A$, an element $a$ is invertible
in $A/(d)$ if and only if $(a)+(d)=(1)$. If $(a)+(d)=(1)$ and $(a)+(e)=(1)$,
multiplying together we get $(a)+(de)=(1)$. Hence, it is enough to
consider  $f=\Phi_{p^j n}(q)$ with $(n,p)=1$.
For any $X\in\N$, we have
\begin{eqnarray}\label{denomin1}
{\widetilde{(q;q)}_X}&=&\prod_{\text{\scriptsize{$\begin{array}{c}i=1 \\ c\nmid i \end{array}$}}}^{X} \prod_{d\mid i} \Phi_d(q),
\\\label{denomin2}
(-t;t)_X&=&\frac{(t^2;t^2)_X}{(t;t)_X}=\prod_{i=1}^{X}\prod_{d\mid i}\Phi_{2d}(t)
\\\label{denomin3}
\frac{(q^c;q^c)_X}{(t;t)_X}
&=&\frac{(t^{b'};t^{b'})_X}{(t;t)_X}=
\frac{
\prod_{i=1}^{X}\prod_{d\mid ib'}\Phi_d(t)
}{
\prod_{i=1}^{X}\prod_{d\mid i}\Phi_d(t)
}
\end{eqnarray}
for $b'=b/c$.
Recall that $(\Phi_r(q),\Phi_a(q))=(1)$ in $\Z[1/p][q]$ if either
$r/a$ is not a power of a prime or  a power
of $p$.
For $r=p^j n$ odd and $a$ such that $c\nmid a$, one
of the conditions is always satisfied.
Hence  \eqref{denomin1} is invertible in $\calS_{p,j}$.
 If $b=c$ or $b'=1$, \eqref{denomin2} and 
\eqref{denomin3}  do not contribute.
For $c<b$, notice that
$q$ is a $cn$th primitive root of unity in
$\Z[1/p][q]/(\Phi_{cn}(q)) = \Z[1/p][e_{cn}]$.
Therefore
$t^{b'}=q^{c}$ is an $n$th primitive root of unity.
Since $(n,b')=1$, $t$ must be a
primitive $n$th root of unity in $\Z[1/p][e_{cn}]$, too, and hence
$\Phi_n(t) = 0$ in that ring.
Since for $j$ with 
$(j, p) > 1$, $(\Phi_j(t),\Phi_n(t)) = (1)$
in $\Z[1/p][t]$, we have $\Phi_j(t)$ is invertible
in $\Z[1/p][e_{cn}]$, and therefore \eqref{denomin2} and 
\eqref{denomin3} are invertible, too.
\end{proof}


\appendix

\numberwithin{equation}{section}
\numberwithin{figure}{section}

\section{Proof of
Theorem \ref{GeneralizedHabiro}}
The appendix is devoted to the proof of Theorem \ref{GeneralizedHabiro},
a generalization of the  deep integrality
result of Habiro, namely Theorem 8.2 of \cite{Ha}. The existence
of this generalization and some ideas of the proof were kindly
communicated to us by Habiro.

\subsection*{Reduction to a result on values of the colored Jones polynomial}
We will use the notations
of \cite{Ha}. We put  $q=e^{h}$,
and $v=e^{h/2}$, where $h$ is a free parameter.
The quantum algebra $U_h=U_h(sl_2)$, generated
by $E, F$ and $H$, subject to some relations, is the
quantum deformation of the universal enveloping algebra
$U(sl_2)$.

Let $V_n$ be the unique $(n+1)$--dimensional
irreducible $U_h$--module.
In \cite{Ha},
Habiro defined a new basis
$\tilde{P}'_k$, $k=0,1,2,\dots$,  for the Grothendieck ring of
finite--dimensional
$U_h(sl_2)$--modules with
\[
\tilde{P}_k':=\frac{v^{\frac{1}{2}k(1-k)}}{\{k\}!} \, \prod_{i=0}^{k-1}
(V_1-v^{2i+1}-v^{-2i-1}).
\]

Put $\tilde{P}'_{\bk}=\{\tilde{P}'_{k_1},\ldots,\tilde{P}'_{k_m}\}$. It follows from Lemma 6.1 of \cite{Ha}  that we will have identity \eqref{Jones} of Theorem \ref{GeneralizedHabiro} if we put

\[
C_{L\sqcup L'}(\bk,\bj)=
J_{L\sqcup L'}\,(\tilde{P}'_{\bk},\bj)\,
\prod_{i}(-1)^{k_i}q^{k^2_i+k_i+1}\, .
\]

Hence
to prove Theorem \ref{GeneralizedHabiro} it is enough to show the following.

\begin{AThm}\label{main-integrality}
Suppose $L\sqcup L'$ is a colored framed link in $S^3$ such that
 $L$ has zero linking matrix and $L'$ has odd colors.
Then  for $k=\max\{k_1,\dots, k_m\}$ we have
\[
J_{L\sqcup L'}(\tilde{P}'_{\bk},\bj) \in \frac{(q^{k+1};q)_{k+1}}{1-q}
\,\,\BZ[q^{\pm 1}].
\]
\end{AThm}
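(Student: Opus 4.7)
The plan is to mimic Habiro's proof of Theorem 8.2 in \cite{Ha}, adapted to accommodate the additional fixed-color sublink $L'$. I would present $L \sqcup L'$ as the trace-closure of a bottom tangle $T \sqcup T'$ with $m + l$ strands, so that $J_{L \sqcup L'}(\tilde{P}'_{\bk}, \bj)$ is obtained from the universal $U_h(sl_2)$ invariant $J_{T \sqcup T'} \in U_h^{\otimes(m+l)}$ by pairing with $\tilde{P}'_{k_i}$ on the first $m$ tensor factors and with the $V_{j_i}$-quantum trace on the last $l$.

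The key input is Habiro's integrality theorem for universal invariants of algebraically split bottom tangles: since the linking matrix of $L$ vanishes, $J_{T \sqcup T'}$ lies in a distinguished integral $\Z[q^{\pm 1}]$-form of $U_h^{\otimes(m+l)}$. On this integral form, pairing the $i$-th factor against $\tilde{P}'_{k_i}$ yields $(q^{k_i+1};q)_{k_i+1}/(1-q)$ times an element of $\Z[q^{\pm 1}]$---this is exactly the content of Habiro's argument when $L'$ is empty. Since $(q^{k_i+1};q)_{k_i+1}$ divides $(q^{k+1};q)_{k+1}$ in $\Z[q^{\pm 1}]$ for $k \geq k_i$, the combined contribution of the first $m$ slots carries the required divisibility, provided the contractions over the last $l$ slots preserve the integral form.

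Handling the $L'$-slots is where the odd-color hypothesis enters. Odd-dimensional irreducibles $V_{j_i}$ (with $j_i$ odd) descend to Lusztig's restricted integral form and to the quantum $SO(3)$, so their quantum traces map Habiro's integral form of $U_h$ into $\Z[q^{\pm 1}]$. The main obstacle is verifying that this integrality survives the $R$-matrix crossings between $T$ and $T'$: concretely, one must show that the universal $R$-matrix, acting on a tensor factor of $V_{j_i}$, is represented by matrices whose entries lie in Habiro's integral form of $U_h$ on the other slot, so that a partial trace over $V_{j_i}$ sends the integral form on $U_h^{\otimes(m+l)}$ back into the integral form on $U_h^{\otimes(m+l-1)}$. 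Granting this, iterated partial traces over the $l$ odd-colored slots reduce the problem to the $L' = \emptyset$ case of \cite{Ha}, from which the theorem follows.
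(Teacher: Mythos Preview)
Your overall plan---reduce to Habiro's case by taking partial traces over the $L'$-slots---is reasonable in spirit, but there is a genuine gap at the step you yourself flag with ``Granting this.''

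The paper's approach is different in a crucial way: it does \emph{not} turn $L'$ into part of a bottom tangle. Instead it keeps $L'$ as a closed colored link and proves a generalization (Theorem~\ref{MMM}) of Habiro's evenness result: it introduces gradings $\gr_\ve$ and $\gr_q$ on the pair $X=T\sqcup L'$ and shows that $J_X$ lies in $q^{\gr_q(L')/4}\bigl(\tilde\U_q^{\ve_1}\tilde\otimes\cdots\tilde\otimes\tilde\U_q^{\ve_n}\bigr)$, where the parity $\ve_i$ records the mod~$2$ color-weighted linking of $L'$ with the $i$th strand of $T$. When all colors of $L'$ are odd, every $\ve_i$ vanishes and one lands in $(\tilde\U_q^{0})^{\tilde\otimes n}$; from there Habiro's original partial-trace argument finishes. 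The proof of Theorem~\ref{MMM} is the substantive work: a geometric decomposition $X=W_2W_1W_0$ with $W_1$ a ``good morphism,'' followed by skein-module computations showing that the generators $Z_n$ of the good-morphism algebra shift the $\ve$-grading and $q$-grading in the predicted way.

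Your proposal, by contrast, asserts that $J_{T\sqcup T'}$ lies in a suitable integral form because ``the linking matrix of $L$ vanishes.'' But Habiro's Theorem~4.1 requires the \emph{entire} bottom tangle to be algebraically split; $T\sqcup T'$ typically is not, since $L'$ may link $L$ and itself nontrivially. Without evenness on the $T$-slots, pairing with $\tilde P'_{k_i}$ does not obviously produce the factor $(q^{k_i+1};q)_{k_i+1}/(1-q)$ in $\Z[q^{\pm1}]$---that divisibility statement in \cite{Ha} depends on the central decomposition of $\tilde\U_q^0$ along the $\sigma_n$. Your sentence ``the main obstacle is verifying that this integrality survives the $R$-matrix crossings between $T$ and $T'$'' identifies exactly the right problem, but what follows is only a hope, not an argument: you would need to show that partial trace over an odd-dimensional $V_{j_i}$ carries the appropriate graded piece of $(\tilde\U_q)^{\tilde\otimes(m+l)}$ into $(\tilde\U_q^{0})^{\tilde\otimes m}$, and this is essentially equivalent to the paper's Corollary~\ref{MMM1}. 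That is where the real content is, and it requires the good-morphism and skein machinery the paper develops.
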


 In the case $L'=\emptyset$, this statement was proved
 in \cite[Theorem 8.2]{Ha}. Since our proof is a modification
of the original one, we first
 sketch Habiro's original proof for the reader's convenience.

\subsection{Sketch of  the proof of Habiro's  integrality theorem}

\subsection*{Geometric part}
Let us first recall
 the notion of a bottom
tangle, introduced by Habiro in \cite{Ha-b}.

An $n$--component bottom tangle $T=T_1\sqcup \dots \sqcup
T_n$ is a framed tangle consisting of $n$ arcs $T_1,\ldots ,T_n$ in a cube
such that all the endpoints of $T$ are on a line at the bottom square
of the cube, and for each $i=1,\ldots ,n$ the component $T_i$ runs from the
$2i$th endpoint on the bottom to the $(2i-1)$st endpoint on the
bottom, where the endpoints are counted from the left.
An example, the Borromean bottom tangle $B$, is given in Figure \ref{borro}.
\begin{figure}[ht]
\mbox{\epsfysize=2cm \epsffile{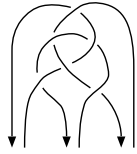}}
\caption{Borromean bottom tangle $B$}
\label{borro}
\end{figure}

In \cite{Ha-b}, Habiro defined a braided subcategory $\modB$ of the
category of framed, oriented tangles which acts on the
bottom tangles by composition (vertical pasting).
The objects of $\modB $ are the symbols $\modb ^{\otimes n}$, $n\ge 0$,
where $\modb:=\downarrow \uparrow $.
 For $m,n\ge 0$, a
morphism $X$ of $\modB$ from $\modb ^{\otimes m}$ to $\modb ^{\otimes n}$ is
the isotopy class of a
framed, oriented tangle $X$ which we can compose with $m$--component
bottom tangles to get $n$--component bottom tangles.
 Let
$\modB (m,n)$ be the set of morphisms from $\modb ^{\otimes m}$
to $\modb ^{\otimes n}$.
The composite $YX$ of two morphisms  is the gluing of $Y$
to the bottom of $X$, and the identity morphism
$1_{\modb ^{\otimes m}}=\downarrow \uparrow \cdots\downarrow \uparrow $
is a tangle consisting of $2m$ vertical
arcs.  The monoidal structure is
given by pasting  tangles side by
side.   The  braiding for the
generating object $\modb$
 with itself is given by
$\psi _{\modb ,\modb }=\incl{1.5em}{phibb+2}$.




 Corollary 9.13 in \cite{Ha-b} states the following.
\begin{Aproposition}\label{thmASL}{\em (Habiro)}
If the linking matrix of a bottom tangle $T$ is zero
then $T$ can be presented as $T=W B^{\otimes k}$, where $k\geq 0$ and $W\in \modB(3k,n)$ is
 obtained by
 horizontal and vertical pasting of finitely many copies of
$1_\modb$, $\psi_{\modb,\modb}$, $\psi^{-1}_{\modb,\modb}$,
 and

  \begin{equation*}
    \def\sss{1.5em}
    \eta _\modb =\incl{\sss}{bot0},\quad
    \def\sss{2em}
    \mu _\modb =\incl{\sss}{bomu},\quad
    \gamma_+=\incl{2.5em}{gamma+},\quad
    \gamma_-=\incl{2.5em}{gamma-}.
  \end{equation*}
\end{Aproposition}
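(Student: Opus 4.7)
The plan is to use the Goussarov--Habiro theory of claspers (also known as $Y$--graphs) to convert the topological hypothesis ``linking matrix is zero'' into a combinatorial statement that $T$ is obtained from the trivial bottom tangle by a finite sequence of surgeries along tree claspers of degree one, and then to interpret each such surgery as the insertion of one copy of $B$. The key observation justifying the appearance of exactly the generators $1_\modb, \psi^{\pm 1}_{\modb,\modb}, \eta_\modb, \mu_\modb, \gamma_\pm$ in $W$ is that the bottom tangles obtained as $W \cdot \emptyset$ with $W$ built only from these generators are, up to isotopy, exactly the split unions of trivial arcs with arbitrary framing; all ``higher'' topology — nontrivial triple linking, Milnor invariants, knottedness — must enter through the Borromean factor $B^{\otimes k}$.

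First I would apply the Goussarov--Habiro theorem that two framed tangles are $Y_2$--equivalent if and only if they have the same linking matrix. Since $T$ has zero linking matrix, it is $Y_2$--equivalent to the trivial bottom tangle $T_0 = \eta_\modb^{\otimes n}$, so $T$ can be obtained from $T_0$ by surgery along some disjoint union of $k$ tree claspers of degree one embedded in the complement of $T_0$. Second, I would use Habiro's identification of a $Y$--graph surgery with a Borromean rings insertion: surgery on a single $Y$--clasper with three leaves is isotopic to pasting in one copy of $B$ along three bands that connect the three components of $B$ to the strands of $T_0$ grabbed by the leaves. Collecting all $k$ clasper surgeries and pushing the Borromean factors to the bottom of the diagram produces a tangle diagram of the form $W'B^{\otimes k}$ for some morphism $W' \in \modB(3k,n)$ realizing the clasper bands.

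Third I would argue that $W'$ lies in the subcategory generated by $1_\modb, \psi^{\pm 1}_{\modb,\modb}, \eta_\modb, \mu_\modb, \gamma_\pm$. The tangle $W'$ consists entirely of (i) the bands that route the legs of $B^{\otimes k}$ toward the output strands, which amount to crossings and local braiding given by $\psi^{\pm 1}_{\modb,\modb}$ together with the multiplication $\mu_\modb$ for merging paired strands, (ii) the caps at the top closing off pairs of trivial arcs inherited from $T_0$, supplied by $\eta_\modb$, and (iii) framing corrections on the output strands, absorbed by $\gamma_\pm$. No further generators are needed because, by construction, $W'$ has zero linking matrix itself — its entire nontrivial topology has been localized in the $B^{\otimes k}$ factor — and therefore admits a decomposition as a pure composite of these simple morphisms, essentially by the same presentation theorem for framed string links with trivial linking matrix and no local knotting.

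The main obstacle is the second step: making the passage from ``an abstract $Y_2$--equivalence'' to ``a concrete expression $W' B^{\otimes k}$'' canonical enough to guarantee that $W'$ stays inside the subcategory generated by the listed simple morphisms. One must show that the clasper moves (leaf slide, edge slide, zip/unzip, framing change on a leaf) can always be performed so that at the end the bands of all clasper leaves are embedded without additional internal knotting or nontrivial mutual linking, so that they can be read off as a composition of crossings, the multiplication $\mu_\modb$, the unit $\eta_\modb$, and the framing generators $\gamma_\pm$. This is essentially the content of Habiro's bottom tangle calculus in \cite{Ha-b}, where the clasper moves are arranged precisely so that the clasper legs can be straightened into the normal form required by the category $\modB$.
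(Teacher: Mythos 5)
The paper offers no proof of this proposition: it is quoted verbatim as Corollary~9.13 of Habiro's bottom-tangle paper \cite{Ha-b}, so the only honest comparison is with Habiro's argument there. Your sketch follows essentially that route --- the Murakami--Nakanishi theorem that the linking matrix is a complete invariant of delta-move (equivalently, $Y$-graph surgery) equivalence, the standard identification of a $Y$-graph surgery with a band-summed copy of the Borromean tangle $B$, and then a normal-form argument putting the banding data into the subcategory generated by $1_\modb$, $\psi^{\pm 1}_{\modb,\modb}$, $\eta_\modb$, $\mu_\modb$, $\gamma_\pm$ --- and you correctly identify the last step as the one carrying the real content. Two small corrections: the equivalence relation classified by the linking matrix is $Y_1$-equivalence (degree-one claspers; $C_2$- or delta-move equivalence in Habiro's clasper indexing), not $Y_2$-equivalence, which is strictly finer and is exactly what your subsequent sentence about ``tree claspers of degree one'' presupposes; and your heuristic that $W'$ ``has zero linking matrix itself'' is not what makes the normal form work --- the bands may be knotted and mutually linked arbitrarily, and the point is rather that arbitrary band routing, crossings, and framing curls are already expressible by $\psi^{\pm 1}_{\modb,\modb}$, $\mu_\modb$, $\eta_\modb$ and $\gamma_\pm$, which is precisely what \cite{Ha-b} establishes.
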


\subsection*{Algebraic part}
Let  $K=v^{H}=e^\frac{hH}{2}$.
Habiro introduced the integral version
 $\U_q$, which is
the $\Z[q, q^{- 1}]$--subalgebra of $U_h$ freely
spanned over $\Z[q, q^{- 1}]$ by $\tilde{F}^{(i)}K^{j}e^k$
for $i,k\geq 0, j\in \Z$,
where
\[
\tilde{F}^{(n)}=\frac{F^nK^n}{v^{\frac{n(n-1)}{2}}[n]!}\;\;\;\;\text{ and
 }\;\;\;\; e=(v-v^{-1})E.
\]

There is $\Z/2\Z$--grading, $\U_q=\U^0_q\oplus \U^1_q$, where 
 $\U^{0}_q $ (resp. $\U_q^{1}$) is  spanned
 by $\tilde{F}^{(i)}K^{2j}e^k$ (resp.
$\tilde{F}^{(i)}K^{2j+1}e^k$). We call this the $\ve$--grading, and 
 $\U^{0}_q $ (resp. $\U_q^{1}$) the even (resp. odd) part.
 
The two--sided ideal $\F_p$ in $\U_q$ generated by $e^p$
 induces a filtration on $(\U_q)^{\otimes n}$, $n\geq 1$, by

$$ \F_p((\U_q^{})^{\otimes n})=
\sum^n_{i=1} (\U_q^{})^{\otimes i-1}\otimes
\F_p(\U_q^{})\otimes (\U_q^{})^{\otimes n-i}
\subset (\U_q^{})^{\otimes n}\, .$$
Let  $(\tilde{\U}_q)^{\tilde\otimes n}$ be
the image of the homomorphism
\[
\lim_{\overleftarrow{\hspace{1mm}{p\geq 0}\hspace{1mm}}} \;\;
\frac{(\U_q)^{\otimes n}}{\F_p((\U_q)^{\otimes n})} \to
U_h^{\hat \otimes n}
\]
where $\hat\otimes$ is the $h$--adically completed tensor product.
By using
$\F_p(\U_q^\ve):= \F_p(\U_q)\cap \U^{\ve}_q$ one defines
$(\tilde{\U}^{\ve}_q)^{\tilde\otimes n}$
for $\ve\in\{0,1\}$
in a similar fashion.

By definition (Section 4.2 of \cite{Ha}),
the universal $sl_2$ invariant  $J_T$ of an $n$--component
 bottom tangle $T$
 is an element of $U_h^{\hat\otimes n}$.
Theorem 4.1 in \cite{Ha} states  that, in fact,
for any bottom tangle $T$ with zero
linking matrix, $J_T$ is even, i.e.
\be\label{even}
J_T\in (\tilde \U^{0}_q)^{\tilde \otimes n}\, . \ee

Further, using the fact that $J_K$ of a $0$--framed bottom knot $K$
 (i.e. a 1--component bottom tangle) belongs to the center of $\tilde \U^0_q$, Habiro showed that
$$J_K=\sum_{n\geq 0}  (-1)^n q^{n(n+1)}\frac{(1-q)}{(q^{n+1};q)_{n+1}}\,  J_K(\tilde P'_n)\, \sigma_n$$
where
$$\sigma_n=\prod^n_{i=0}(C^2-(q^i+2+ q^{-i}))\, \quad {\rm with}\quad
C=(v-v^{-1})\tF^{(1)}K^{-1}e+vK+v^{-1}K^{-1}\, ,$$  the quantum Casimir
operator. The $\sigma_n$ provide a basis for the even part of the center.
From this, Habiro deduced that $J_K(\tilde P'_n)\in \frac{  (q^{n+1};q)_{n+1}}{(1-q)} \Z[q,q^{-1}]$.

The case of  $n$--component bottom tangles reduces to the 1--component case by partial trace, using certain integrality of traces of even element (Lemma 8.5 of \cite{Ha}) and the fact that
$J_T$ is invariant under the adjoint action.


\subsection*{Algebro--geometric part}
The proof of   \eqref{even}
uses  Proposition \ref{thmASL}, which allows to build
any bottom tangle $T$ with zero linking matrix from simple parts,
i.e. $T=W(B^{\otimes k})$.


On the other hand, the construction of the universal invariant $J_T$
extends to the braided functor $J:\modB\to \operatorname{ Mod}_{U_h}$
from $\modB$ to the category of $U_h$--modules.
This means that
$J_{W(B^{\otimes k})}=J_W(J_{B^{\otimes k}})$.
Therefore, in order to show \eqref{even}, we need to check
that $J_B \in (\tilde \U^0_q)^{\tilde \otimes 3} $,  and then verify that
$J_W$ maps the even part to itself.
The first check can be done by a direct computation
\cite[Section 4.3]{Ha}.
The last verification is the content
of Corollary 3.2 in \cite{Ha}.


\newcommand{\gr}{\operatorname{gr}}
\newcommand{\tcS}{\tilde {\mathcal S}}
\newcommand{\cS}{{\mathcal S}}

\subsection{Proof of Theorem \ref{main-integrality}
}

\subsection*{Generalization of Equation  \eqref{even}}
To prove Theorem \ref{main-integrality} we need a generalization 
of Equation \eqref{even} or Theorem 4.1 in \cite{Ha} to
 tangles with closed components.
To state the result let us first introduce two new gradings.

Suppose $T$ is an $n$--component bottom tangle in a cube, homeomorphic to
the 3--ball  $D^3$.
Let $\tcS(D^3\setminus T)$ be the $\Z[q^{\pm 1/4}]$--module freely generated
by the isotopy classes of
framed unoriented colored links in $D^3\setminus T$, including the empty
link. For such
a link $L \subset D^3\setminus T$ with $m$--components colored by
$n_1,\dots, n_m$, we define our new gradings  as follows.
First provide the components of $L$ with arbitrary orientations. 
Let $l_{ij}$ be the linking
number between the $i$th component of $T$ and the $j$th component of $L$, 
and $p_{ij}$
 be the linking number between the $i$th and the $j$th components of $L$.
For $X=T\sqcup L$ we put
\begin{equation}\gr_\ve(X):= (\ve_1,\dots, \ve_n)\in (\Z/2\Z)^n, \quad  \text{where}\quad
 \ve_i := \sum_{j} l_{ij} n'_{j} \pmod 2, \quad \text{ and}
\label{3306}
\end{equation}

$$ 
\gr_q(L) := \sum_{1\le i,j\le m }p_{ij} n_i'n_j' + 2\sum_{1\le j\le m}(p_{jj}+1) n_j' \pmod 4,
\quad \text{where} \quad n_i':= n_i-1.
$$
It is easy to see that the definitions do not depend on the orientation of $L$.

The meaning of $\gr_q(L)$ is the following: The colored Jones polynomial of $L$,
a priori  a Laurent polynomial of $q^{1/4}$,
is actually a Laurent polynomial of $q$ after dividing by
$q^{\gr_q(L)/4}$; see \cite{LeDuke} for this result and its generalization to other Lie algebras.

We further extend both gradings to $\tcS(D^3\setminus T)$ by
$$ 
\gr_\ve(q^{1/4})=0, \quad \gr_q(q^{1/4}) =1 \pmod 4.
$$

\vskip2mm

Recall that the universal invariant $J_X$ can also be defined when 
$X$ is the union of a bottom tangle and 
a colored link (see \cite[Section 7.3]{Ha-b}). 
In \cite{Ha-b}, it is proved that $J_X$ is adjoint invariant.
The generalization of
Theorem 4.1 of \cite{Ha} is the following.
 
\begin{AThm}\label{MMM}
Suppose $X=T \sqcup L$, where $T$ is a $n$--component bottom tangle with zero
linking matrix
and $L$ is a framed unoriented colored link with 
$\gr_\ve(X)= (\ve_1,\dots,\ve_n)$.
 Then
$$ J_X \in  q^{{\gr_q(L)/4}}\, \left ( \tilde \U_q^{\ve_1}\, \tilde \otimes
\dots \tilde \otimes\; \tilde \U_q^{\ve_n}\right ).$$
\label{3310}
\end{AThm}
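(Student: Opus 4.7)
The plan is to adapt Habiro's original proof of Theorem 4.1 of \cite{Ha} (the case $L = \emptyset$) to accommodate closed colored link components. The strategy has three ingredients: enlarging Habiro's bottom-tangle category $\modB$ to a category $\modB^{\circ}$ whose morphisms consist of bottom tangles together with disjoint framed colored closed link components; proving a structure theorem in this enlarged category analogous to Proposition \ref{thmASL}; and tracking how each basic morphism affects both the $\varepsilon$-grading $(\varepsilon_1, \ldots, \varepsilon_n)$ and the $q^{1/4}$-scaling. The universal invariant extends to a functor on $\modB^{\circ}$, with each closed component of color $n$ contributing through the partial quantum trace $\tr_{V_n}^q$ in the corresponding slot of $U_h^{\hat\otimes \bullet}$.

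First I would prove a structure theorem: any $X = T \sqcup L$ with zero $T$-linking matrix can be written as $X = W(B^{\otimes k} \sqcup U_{n_1}\sqcup \cdots \sqcup U_{n_m})$, where $U_{n_j}$ is the colored $0$-framed unknot of color $n_j$ in a standard position in the cube, and $W \in \modB^{\circ}(3k+m, n)$ is assembled from the standard generators of $\modB$ plus one new type of generator, namely, insertion of a colored unknot at a prescribed location. The full linking configuration of $L$ with itself and with $T$ is recreated by the combinatorics of $W$, using the $\gamma_{\pm}$ generators for self-linkings and elementary braidings for the $T$-$L$ linkings. Next, I would verify the claim on each generator. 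For the standard generators of $\modB$, preservation of the refined grading follows from a straightforward extension of \cite[Corollary 3.2]{Ha} to the $(\varepsilon_1, \ldots, \varepsilon_n)$-graded tensor products; for $J_B$, the direct computation of \cite[\S 4.3]{Ha} gives $J_B \in (\tilde\U_q^{0})^{\tilde\otimes 3}$. For the new generator, i.e., insertion of a colored unknot of color $n_j$, the effect on any linked bottom-tangle slot, after partial tracing with $V_{n_j}$, must shift the $\varepsilon$-grading by $n'_j \pmod 2$, while the framing and self-linking of $L_j$ contribute a factor of $q^{1/4}$ whose exponent matches the corresponding term of $\gr_q(L)$.

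The main obstacle is the precise analysis of the partial quantum trace $\tr_{V_{n_j}}^q$ acting between a $\mathbb{Z}/2\mathbb{Z}$-graded slot of $\tilde\U_q^{\tilde\otimes \bullet}$ and a colored closed component. The crucial local claim is that $\tr_{V_{n_j}}^q$ applied to a single crossing shifts the grading of the linked bottom-tangle slot by exactly $n'_j \pmod 2$ and contributes a scalar factor whose $q^{1/4}$-exponent matches the relevant contribution to $\gr_q(L)$. This follows from the observation that the $K$-weights on $V_{n_j}$ all have parity $n'_j$, combined with a careful expansion of the universal $R$-matrix, but the book-keeping across all crossings is delicate. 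Once the local shift is established, summing over all $T_i$-$L_j$ linkings reproduces $\varepsilon_i = \sum_j l_{ij}\,n'_j \pmod 2$, and summing the fractional $q$-exponents over all framing and linking contributions yields $q^{\gr_q(L)/4}$, as in the standard normalization of the colored Jones polynomial \cite{LeDuke}.
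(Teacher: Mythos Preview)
Your proposed structure theorem does not work as stated, and this is a genuine gap. The generators of $\modB$ you list ($1_\modb$, $\psi_{\modb,\modb}^{\pm 1}$, $\eta_\modb$, $\mu_\modb$, $\gamma_\pm$) act only on bottom-tangle strands; once a closed component has been inserted as an unknot $U_{n_j}$, nothing in your generating set can change its knot type or link it with the $T$-strands or with the other closed components. So the decomposition $X = W(B^{\otimes k}\sqcup U_{n_1}\sqcup\cdots\sqcup U_{n_m})$ cannot reproduce an arbitrary $L$. Your remark that ``the full linking configuration of $L$ \ldots\ is recreated by the combinatorics of $W$, using the $\gamma_\pm$ generators for self-linkings and elementary braidings for the $T$--$L$ linkings'' is exactly the step that fails: $\gamma_\pm$ and $\psi_{\modb,\modb}^{\pm 1}$ braid $\modb$-strands with $\modb$-strands, not with closed circles. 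Consequently the partial-trace analysis you outline, which treats only individual $T$--$L$ crossings, never confronts the self-crossings and mutual crossings of the components of $L$.

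The paper avoids this obstacle by a different reduction. First (Proposition~\ref{TWWB}) one isotopes $L$ into a ``good'' position, disjoint from the up-arrows, so that $X = W_2 W_1 W_0$ with $W_0$ a bottom tangle with zero linking matrix, $W_1$ a good morphism (the identity $I_m$ together with a good link), and $W_2$ built from $\mu_\modb$'s. The crucial point is then Proposition~\ref{MM2}: rather than decomposing $W_1$ in a tangle category, one passes to the \emph{skein module} of $C\setminus I_m$. Cabling and Kauffman skein relations are shown (Lemma~\ref{A8}) to preserve both $\gr_\ve$ and $\gr_q$, and in the resulting skein algebra the subalgebra generated by good links is generated by $2^m$ explicit simple curves $W_\gamma$ encircling subsets of the down-arrow endpoints (Lemma~\ref{A9}). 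It then suffices to verify the grading shift for the basic curves $Z_n = W_{(1,\dots,1)}$, which is done by an explicit inductive computation with the coproduct and the $R$-matrix (Proposition~\ref{3308}). This skein-module step is the idea your proposal is missing.
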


\begin{Acorollary}\label{MMM1}
Suppose $L$ is colored by a tuple of odd numbers, then
$$J_X\in (\tilde \U_q^{0})^{\tilde\otimes n}\, .$$
\end{Acorollary}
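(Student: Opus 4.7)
The plan is to deduce Corollary A.4 as an immediate specialization of Theorem A.3 via a parity calculation. Two things need to be checked: (i) that the $\ve$-grading $\gr_\ve(X)\in(\Z/2\Z)^n$ vanishes identically, and (ii) that the $q$-grading satisfies $\gr_q(L)\equiv 0\pmod 4$, so that the prefactor $q^{\gr_q(L)/4}$ is a genuine integer power of $q$ and thus a unit in $\Z[q^{\pm 1}]\subset \tilde{\U}_q^{0}$.

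For (i), write the colors of $L$ as $(n_1,\dots,n_m)$ with every $n_j$ odd, so that the shifted colors $n'_j:=n_j-1$ are all even. Inserting this into the definition \eqref{3306}, each entry
\[
\ve_i \;=\; \sum_{j} l_{ij}\, n'_j \pmod 2
\]
of $\gr_\ve(X)$ is a $\Z$-linear combination of even integers, hence vanishes modulo $2$. Thus $\gr_\ve(X)=(0,\dots,0)$.

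For (ii), in the defining formula
\[
\gr_q(L) \;=\; \sum_{1\le i,j\le m} p_{ij}\, n'_i n'_j \;+\; 2\sum_{1\le j\le m}(p_{jj}+1)\, n'_j \pmod 4,
\]
each product $n'_i n'_j$ has both factors even and is therefore divisible by $4$; likewise each $2(p_{jj}+1)n'_j$ in the second sum is divisible by $4$ because $n'_j$ is even. Consequently $\gr_q(L)\equiv 0\pmod 4$, and the prefactor $q^{\gr_q(L)/4}$ is a unit in the coefficient ring. Applying Theorem A.3, we obtain $J_X\in q^{\gr_q(L)/4}\bigl(\tilde{\U}_q^{\ve_1}\tilde\otimes\cdots\tilde\otimes\tilde{\U}_q^{\ve_n}\bigr) = (\tilde{\U}_q^{0})^{\tilde\otimes n}$, which is exactly the claim.

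The genuine difficulty is not in this corollary, which is a bookkeeping exercise on parities modulo $2$ and $4$, but rather in the parent Theorem A.3: one must track how the insertion of a closed colored component $L$ perturbs Habiro's original evenness result (Theorem 4.1 of \cite{Ha}) for pure bottom tangles, and verify that the perturbation is exactly the announced shift by $q^{\gr_q(L)/4}$ together with the $\ve$-flip on the tangle strands recorded by $\gr_\ve$. Once A.3 is established, the specialization to odd-colored $L$ is the short parity argument above.
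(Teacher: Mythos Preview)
Your proof is correct and is exactly the argument the paper has in mind: the corollary is stated without proof immediately after Theorem~A.3, because the parity check you carry out---that odd colors force all $n'_j$ to be even, whence $\gr_\ve(X)=0$ and $\gr_q(L)\equiv 0\pmod 4$---is the intended (and only reasonable) way to specialize. Your last paragraph correctly identifies where the real work lies.
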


Since $J_X$  is invariant under the adjoint action, Theorem
\ref{main-integrality} follows from Corollary \ref{MMM1} by repeating
Habiro's arguments.
\qed

Hence it remains to prove Theorem \ref{MMM}.
In the proof we will need a notion of a {\em good morphism}.

\subsubsection*{Good morphisms}
Let  $I_m:=1_{\modb^{\otimes m}}\in \modB(m,m)$ be the identity morphism of $\modb ^{\otimes m}$
in the cube $C$.
A framed link $L$ in the complement $C \setminus I_m$ is
{\em good} if $L$ is geometrically disjoint from all the
up arrows of $\modb ^{\otimes m}$, i.e.
there is a plane dividing the cube into two  halves, such that all  the up
arrows are in one half,
and all the down arrows and $L$ are in the other. Equivalently,
there is a diagram in which all
the up arrows are above all components of $L$. The union $W$ of $I_m$ and a
 colored framed good link $L$ is called a {\em good morphism}.
If $Y$ is any bottom tangle so that we can compose $X =WY$, then it is easy to see that 
$\gr_\ve(X)$ does not depend on $Y$, and we define 
$\gr_\ve(W):=\gr_\ve(X)$.   Also define $\gr_q(W):= \gr_q(L)$.

As in the case with $L=\emptyset$, the universal invariant extends
to a map $J_W: \U_h^{\otimes m} \to \U_h^{\otimes m}$.

\subsection*{Proof of Theorem \ref{MMM}}
The strategy here is again analogous to the Habiro case: 
In Proposition \ref{TWWB}
we will decompose $X$ into simple parts: the top is a bottom tangle
with zero linking matrix, the next is a good morphism, 
and the bottom is
a morphism obtained by pasting copies of $\mu_\modb$.
Since, any bottom tangle with zero linking matrix satisfies
Theorem \ref{MMM} and $\mu_\modb$ is the product in $\U_q$, which preserves
the gradings, it remains to show that 
any good morphism preserves the gradings.
This is done in
Proposition \ref{MM2} below.
\qed


\begin{Aproposition}\label{TWWB}
Assume $X=T\sqcup L$ where $T$ is a $n$--component bottom tangle with zero linking matrix
and $L$ is a link. Then
there is a presentation
$X =W_2 W_1 W_0$,
where $W_0$ is a bottom tangle with zero linking matrix, $W_1$ is a good morphism, and
 $W_2$
is obtained by pasting  copies of $\mu_\modb$.
\end{Aproposition}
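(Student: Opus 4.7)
My approach would be to push the link $L$ into a thin horizontal slab at the bottom of the cube, then introduce cuts in the legs of $T$ inside that slab so that $L$ ends up separated from the up arrows, absorbing the resulting increase in the number of $\modb$--factors into a stack of multiplications $\mu_\modb$ at the very bottom.

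First, I would use an ambient isotopy of the cube to arrange $X = T \sqcup L$ so that $L$ lies entirely in a thin horizontal slab $S$ near the bottom. Above $S$ one then sees only the ``body'' of $T$, meeting the top of $S$ in $2n$ points, while inside $S$ one sees the $2n$ vertical legs of $T$ together with $L$. This is possible because, in the complement of the body of $T$, the link $L$ can always be isotoped down into a collar of the bottom face of the cube.

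Next, within $S$ I would cut each leg of $T$ at points very close to the bottom edge $\modb^{\otimes n}$. Each cut introduces two new bottom--tangle endpoints and replaces a leg by (a) a piece attached to the body of $T$ and ending at the cut, and (b) a short vertical arc from the cut down to the original endpoint at the bottom. With enough cuts, placed so that the short arcs nest correctly, one can arrange that between the cuts and the bottom of $S$ the link $L$ lies on one side of a vertical plane whose other side contains all of the up arrows of the intermediate $\modb^{\otimes m}$. The piece above $S$ is then an $m$--component bottom tangle $W_0$; the slab between the top of $S$ and the cut heights, together with $L$, is a good morphism $W_1 \in \modB(m,m)$; and the short vertical arcs below the cuts, grouped according to $\modb^{\otimes m}$, compose into a vertical pasting $W_2$ of $\mu_\modb$'s realizing a morphism $\modb^{\otimes m} \to \modb^{\otimes n}$. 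By construction $X = W_2 W_1 W_0$.

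The main obstacle will be verifying that $W_0$ has zero linking matrix. A generic cut of a bottom--tangle component does not preserve zero linking, since two pieces of a single component can link nontrivially (cutting a complicated arc can produce Hopf--linked halves). I would control this by placing the cuts sufficiently close to the bottom, so that the newly created short arcs (inside $S$) are unknotted and unlinked with everything except the parent component; under this local hypothesis the linking numbers between components of $W_0$ are determined additively by the linking numbers between components of $T$, which vanish by assumption. This step, together with a careful count showing that the grouping of short arcs at the bottom does indeed assemble into a vertical pasting of $\mu_\modb$'s, is where the real work lies.
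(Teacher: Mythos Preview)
There is a real gap, and it is not the one you flag. Your local-cut argument does handle the linking matrix of $W_0$: cuts placed close enough to the bottom create short arcs that are trivial and unlinked, so $W_0$ inherits the zero linking matrix of $T$. The step that fails is the claim that $W_1$ is a \emph{good} morphism. By definition $W_1=I_m\sqcup L$ is good only when some plane separates all up arrows of $I_m$ from $L$; in your construction the strands of $I_m$ are simply the legs of $T$ in the slab, with their given orientations, and $L$ may well link an up-arrow leg. Cutting those legs \emph{below} $L$ does nothing to undo that linking, and the extra short arcs you create have meridians that are trivial in $\pi_1(D^3\setminus T)$, so ``enough cuts'' buys no new room to move $L$. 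Concretely, take $T$ to be a single zero-framed knotted arc (say a long trefoil) and let $L$ be a longitude. The good half-slab has $\pi_1$ generated by the down-leg meridian, but the longitude is not conjugate to any meridian power in the trefoil group, so $L$ can never be isotoped into the good region; your $W_1$ is never good.

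The paper's proof avoids this by first applying Habiro's structural result (Proposition~\ref{thmASL}) to write $T=W_2\,\tilde W_1(B^{\otimes k})$ and then sliding every copy of $\mu_\modb$ to the bottom. The point is that the resulting bottom tangle $W_0=\tilde W_1(B^{\otimes k})$ has \emph{no local minima}, so the pair $(P_+,W_0)$ is homeomorphic to $(P_+,\text{trivial arcs})$ and its complement is an honest handlebody. In that handlebody any link can be isotoped down to a neighborhood of the down-arrow endpoints, which is exactly what makes $W_1$ good. It is this absence of minima --- obtained from Habiro's decomposition, not from naive cuts of an arbitrary $T$ --- that makes the argument go through.
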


\begin{proof}
 Let us first  define
 $\tilde \gamma_\pm\in \modB(i, i+1)$ for any $i\in {\mathbb N}$ as follows.
 \begin{equation*}
    \def\sss{3em}
  \tilde\gamma_+  =\incl{\sss}{tildegamma+}\quad
    \def\sss{3em}
    \tilde\gamma_- =\incl{\sss}{tildegamma-}.
    \end{equation*}
    
  If a copy of  $\mu_\modb$ is directly above $\psi^{\pm 1}_{\modb, \modb}$ or
$ \gamma_\pm$, one can move $\mu_\modb$ down by isotopy
and represent the result by pasting copies of $\psi^{\pm 1}_{\modb, \modb}$ and
$\tilde \gamma_\pm$.
It is easy to see that after the isotopy $\gamma_\pm$
gets replaced by $\tilde\gamma_\pm$ and $\psi^{\pm 1}_{\modb, \modb}$
by two copies of $\psi^{\pm 1}_{\modb, \modb}$. 
    
Using Proposition \ref{thmASL} and reordering the basic morphisms so that the $\mu$'s are at the bottom, one can
see that $T$ admits the following
presentation:
$$T= W_2 \tilde W_1 (B^{\otimes k})$$
where $B$ is the Borromean tangle, $W_2$ is obtained by pasting copies of
$\mu_\modb$ and
$\tilde{W}_1$ is obtained by pasting copies of $\psi^{\pm 1}_{\modb,\modb}$,
$\tilde\gamma_{\pm}$ and $\eta_\modb$.

Let $P$ be the horizontal plane separating
$\tilde W_1$ from $W_2$. Let $P_+$ ($P_-$) be the upper (lower, respectively)
half--space. Note that $W_0=\tilde W_1(B^{\otimes k})$ is a bottom tangle with zero linking matrix lying in $P_+$  and does not have
any minimum points. Hence the pair
$(P_+,W_0)$ is homeomorphic to the pair
$(P_+,\; l \text{ trivial arcs})$.
Similarly, $W_2$ does not have any maximum points; hence $L$
can be isotoped  off $P_-$ into $P_+$. Since the pair
$(P_+,W_0)$ is homeomorphic to the pair
$(P_+,\; l\text{ trivial arcs})$ one can isotope $L$ in $P_+$ to the bottom end points of down arrows. 
We then obtain the desired presentation.
\end{proof}

\begin{Aproposition}\label{MM2} For every good morphism $W$, the operator $J_{W}$ preserves gradings in the following sense.
 If $x\in \U_q^{\ve_1} \otimes \dots \otimes \U_q^{\ve_m}$,
then 
$$ 
J_{W}(x) \in q^{\gr_q(W)/4} \left( \U_q^{\ve_1'} \otimes \dots \otimes
 \U_q^{\ve_m'}\right), \quad \text{where} \quad (\ve'_1,\dots, \ve'_m) =
 (\ve_1,\dots, \ve_m) + \gr_{\ve}(W).
$$
\end{Aproposition}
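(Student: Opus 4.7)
The strategy is to decompose a good morphism into elementary pieces and verify the two grading claims on each piece, then patch the contributions together. The geometric hypothesis defining a good morphism, that $L$ is disjoint from the up arrows of $I_m$, means that in some diagram for $W$ the up arrows sit in a horizontal strip above everything else and cross nothing. Consequently $J_W$ acts as the identity on tensor factors associated to up arrows and the claim for those factors is trivial. So I would restrict attention to the action on the tensor factors associated to the down arrows.

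Next I would reduce to a single link component. Choose a planar presentation of $L$ with a height function and decompose $L$ into a product (in $\modB$) of elementary pieces: local maxima, local minima, crossings $\psi^{\pm 1}$ between two components of $L$, crossings $\psi^{\pm 1}$ of a component of $L$ with a down-arrow strand, and ribbon twists $\gamma_\pm$ on the components of $L$. Because the universal invariant is a braided monoidal functor, $J_W$ is a composition of elementary operators, and it suffices to check that each elementary move produces the correct shift of $\gr_\ve$ and the correct power of $q^{1/4}$. Link--link crossings and twists internal to $L$ do not touch the tensor factors of $I_m$, but they do contribute $q^{1/4}$ factors. The interactions between a colored strand of $L$ (say the $j$th component, colored by $n_j$) and a down arrow (say the $i$th) contribute both to the grading of the $i$th tensor slot and to $q^{1/4}$.

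The key algebraic input is the following parity statement for the irreducible module $V_n$: all weights of $V_n$ have the same parity as $n-1 \pmod 2$, so $K$ acts on every weight vector of $V_n$ as $v^k$ with $k \equiv n-1 \pmod 2$. A crossing of a $V_{n_j}$-colored strand with a down arrow inserts, in the $i$th tensor slot, an element of $\U_q$ of the form $\tilde F^{(s)} K^{\pm(n_j-1)+2t} e^s$ together with a scalar $q^{(n_i-1)(n_j-1)/4}$ coming from the $K\otimes K$-term in the R-matrix. This element is homogeneous of $\ve$-degree $n_j - 1 \pmod 2$, which after summing over all crossings with the $i$th down arrow gives exactly the shift $\ve_i \mapsto \ve_i + \sum_j l_{ij}(n_j-1) \pmod 2$ required by \eqref{3306}. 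A ribbon twist $\gamma_\pm$ on the $j$th component multiplies by the ribbon element in $V_{n_j}$, which is a scalar $q^{\pm(n_j^2-1)/4}$; combined with the crossings this assembles to exactly the stated $q^{\gr_q(L)/4}$.

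The main obstacles are two. First, keeping track of the $q^{1/4}$ powers coherently: the $q^{H\otimes H/4}$ in the R-matrix, the scalar action of the ribbon element, and the normalization by the quantum trace in each $V_{n_j}$ all contribute, and the shift by $+1$ in the term $2(p_{jj}+1)n_j'$ of $\gr_q(L)$ reflects the standard discrepancy between blackboard framing and the framing picked up by the universal invariant; this must be matched to the scalar coming from closing each component of $L$ against the ribbon element. Second, a careful accounting of signs and conventions for left/right, positive/negative crossings and the independence of the orientations chosen on $L$ (used in the definitions of $l_{ij}$ and $p_{ij}$); the fact that $\gr_q(L)\pmod 4$ and $\gr_\ve(W)\pmod 2$ are orientation-independent is precisely what makes the final statement well-posed, and I would verify this by noting that reversing an orientation changes each $l_{ij}$ and each $p_{ij}$ in a controlled way and is compensated by the corresponding change in the universal invariant under the antipode.
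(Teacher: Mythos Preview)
Your plan takes a genuinely different route from the paper's proof. The paper does \emph{not} decompose $W$ into elementary crossings, cups, caps and twists and track the R-matrix piece by piece. Instead it passes to the skein module: since $J_X$ is invariant under cabling and skein relations, and both $\gr_\ve$ and $\gr_q$ are checked to be preserved by those relations (Lemma~A.8), one may reduce every color of $L$ to $2$ and work in the skein algebra $\mathcal A_n$ of good morphisms. That algebra is generated by the $2^n$ simple closed curves $W_\gamma$ (Lemma~A.9), and since $J_{\psi^{\pm1}_{\modb,\modb}}$ preserves the gradings, it suffices to verify the statement on the basic good morphisms $Z_n=W_{(1,\dots,1)}$. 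This last step is done by an explicit Hopf--algebraic computation: $J_{Z_1}=C\otimes\id_\uparrow$ with the Casimir $C\in v\,\U_q^1$, and an inductive formula $J_{Z_{n+1}}=(1\otimes\Phi\otimes\id)(J_{Z_n})$ together with a parity lemma for the coproduct on $K$--homogeneous elements (Lemmas~A.12--A.13). The skein reduction buys you a finite, concrete set of generators with all colors equal to $2$, so the $q^{1/4}$ bookkeeping collapses to a single case.

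Your approach is plausible in spirit, but as written it has a concrete gap and an unresolved obstacle. The gap: your phrase ``together with a scalar $q^{(n_i-1)(n_j-1)/4}$'' at an $L$--down-arrow crossing does not make sense, because the down arrows are not colored --- they carry elements of $\U_q$, not vectors in some $V_{n_i}$. The factor $D=v^{H\otimes H/2}$ in the R-matrix contributes a power of $K$ to the $\U_q$ tensor slot (with parity $n_j-1$, as you correctly note), but no scalar power of $q$ of the shape you wrote; all the $q^{1/4}$ contributions to $\gr_q$ must come from $L$--$L$ crossings, twists on $L$, and the closure of each $L$--component by quantum trace. The unresolved obstacle is exactly the one you flag yourself: matching those three sources of $q^{1/4}$ against the formula for $\gr_q(L)\pmod 4$, including the $+1$ in $2(p_{jj}+1)n_j'$, is the entire content of the $q$--grading claim, and you have only outlined what would need to be checked rather than checked it. The paper sidesteps this bookkeeping completely by the skein reduction, which is the main structural idea you are missing.
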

The rest of the appendix is devoted to the proof of Proposition \ref{MM2}.

\subsection*{Proof of Proposition \ref{MM2}}\label{ProofMM2}
We proceed as follows. Since $J_X$ is invariant under cabling and
skein relations, and by Lemma  \ref{A8} below, both relations
preserve $\gr_\ve$ and $\gr_q$, we consider the quotient
of $\tilde S(D^3\setminus T)$ by these relations known as a skein module
of $D^3\setminus T$. 
For  $T=I_n$, this module has a natural algebra structure, with
good morphisms forming a subalgebra.
By Lemma \ref{A9} 
(see also Figure \ref{Wgamma}),
the basis elements $W_\gamma$ of this subalgebra are labeled by $n$--tuples
$\gamma=(\gamma_1,\dots, \gamma_n)\in (\Z/2\Z)^n$.
 It's clear that if the
proposition holds for $W_{\gamma_1}$ and $W_{\gamma_2}$,
then it holds for $W_{\gamma_1} W_{\gamma_2}$. 
Hence 
it remains to check the claim for $W_\gamma$'s.
This is done in Corollary \ref{corA10} 
for basic good morphisms corresponding to $\gamma$ whose
  non--zero $\gamma_j$'s are consecutive.
 Finally,  any $W_\gamma$ can be 
obtained by pasting a basic good morphism  with few copies of $\psi^{\pm}_{\modb, \modb}$.
Since $J_{\psi^\pm}$ preserves gradings (compare
(3.15), (3.16) in \cite{Ha}), the claim follows
from Lemmas \ref{A9}, \ref{A8} and Proposition \ref{3308} below.
\qed

\subsubsection*{Cabling and skein relations}
Let us introduce the following relations in $\tcS(D^3\setminus T)$.

Cabling relations:
\begin{itemize}
\item[(a)] Suppose $n_i=1$ for some $i$. The first
cabling relation is $L=\tilde L$, where $\tilde L$ is obtained from $L$ 
by removing the $i$th component.
\item[(b)] Suppose $n_i \ge 3$ for some $i$. The second cabling relation 
is $L= L'' -L'$,
where $L'$
is the link $L$ with the color of the $i$th component switched to $n_i-2$, 
and $L''$
is obtained
from $L$ by replacing the $i$th component with two of its parallels, which are
colored with $n_i-1$ and $2$.
\end{itemize}

Skein relations:
\begin{itemize}
\item[(a)] The first skein relation is $U=q^{\frac 12}+q^{-\frac 12}$, where $U$ denotes the
unknot with framing zero and color 2.
\item[(b)] Let $L_R$, $L_V$ and $L_H$ be unoriented  framed links with colors 2 which are
identical except in a disc where they are as shown in Figure \ref{Skein}. Then the second skein
relation is $L_R = q^{\frac 14}L_V + q^{-\frac 14}L_H$ if the two strands in the crossing come 
from different components of $L_R$, and $L_R = \epsilon(q^{\frac 14}L_V - q^{-\frac 14}L_H)$ 
if the two strands come from the same component of $L_R$, producing a crossing of sign 
$\epsilon=\pm 1$ (i.e. appearing as in $L_{\epsilon}$ of Figure \ref{Skein} if $L_R$ is oriented).
\end{itemize}
\begin{figure}[ht]
\mbox{\hspace{1.3cm}\epsfysize=1.3cm
\epsffile{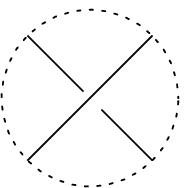}\hspace{7mm}
\epsfysize=1.3cm\epsffile{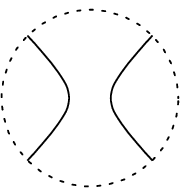}\hspace{7mm}
\epsfysize=1.3cm\epsffile{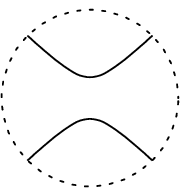}\hspace{7mm}
\epsfysize=1.3cm\epsffile{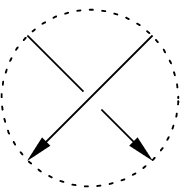}\hspace{7mm}
\epsfysize=1.3cm\epsffile{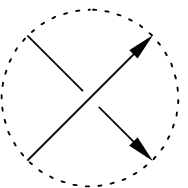}\hspace{7mm}
}
\caption{\hspace{0mm} $L_R$ \hspace{1.5cm} $L_V$ \hspace{1.5cm} 
$L_H$ \hspace{1.5cm} $L_{+}$ \hspace{1.5cm} $L_{-}$
}
\label{Skein}
\end{figure}

We denote by $S(D^3\setminus T)$
the quotient of $\tcS(D^3\setminus T)$ by these relations. It is known as
the  {\it skein module} of $D^3\setminus T$ (compare   \cite{Pr},
\cite{SikPr} and \cite{Bullock}).   Recall
that the ground ring is $\Z[q^{\pm 1/4}]$.

Using the cabling relations, we can reduce all colors of $L$ in 
$S(D^3\setminus T)$ to be 2. 
Note that the skein module
 $S(C\setminus I_n)$
has a natural algebra structure, given by putting one cube on the
 top of the other.
Let us denote by $A_n$
the subalgebra of this skein algebra generated by good morphisms.


For a set $\gamma=(\gamma_1,\dots, \gamma_n)\in (\Z/2\Z)^n$
 let $W_\gamma$ be
 a simple closed  curve  encircling the end points of those downward  arrows with 
 $\gamma_i=1$. See Figure \ref{Wgamma} for an example.
 \begin{figure}[ht!]
\mbox{
\epsfysize=1.7cm\epsffile{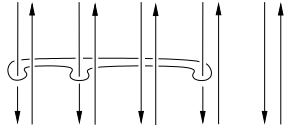}
}
\caption{\hspace{0mm} The element $W_{(1,1,0,1,0)}$.
\label{Wgamma}}
\end{figure}

 
 Similarly to the case of Kauffman bracket skein module \cite{Bullock}, one can easily prove 
 the following.
\begin{Alemma}  The algebra $\mathcal A_n$ is generated by $2^n$  curves $W_\gamma$.
\label{A9}
\end{Alemma}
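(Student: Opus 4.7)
The strategy mimics the classical argument for Kauffman bracket skein algebras of punctured planar surfaces, as developed by Przytycki, Sikora and Bullock in the cited references. The idea is to apply the cabling and skein relations defining $S(D^3\setminus T)$ to reduce any good morphism to a linear combination of products of the $W_\gamma$'s.

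First, using the second cabling relation, one inductively reduces every component of color $n_i\ge 3$ to components of color $2$; the first cabling relation simultaneously eliminates color-$1$ components. Thus every good morphism becomes a linear combination of good morphisms whose link components all carry color $2$. Next, one picks a regular projection of $L$ onto a plane lying in the lower half of the cube (below the separating plane of the good morphism) and applies the second skein relation at each crossing. This expresses the good morphism as a linear combination of good morphisms whose underlying link diagrams are crossing-free. Finally, the first skein relation removes every contractible component at the cost of a scalar factor $q^{1/2}+q^{-1/2}$.

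After these reductions, what remains is a disjoint union of non-crossing, non-contractible simple closed curves drawn in a disk with $n$ marked points, namely the endpoints of the downward arrows of $I_n$. Each such simple closed curve encloses a unique subset $S\subseteq\{1,\ldots,n\}$ of marked points and is therefore isotopic to $W_\gamma$ with $\gamma=\chi_S$. In a disk with $n$ punctures any family of disjoint simple closed curves is laminar: any two of them are either nested or disjoint in the plane. Processing the curves from innermost to outermost, one can isotope them to distinct heights in the cube so that their three-dimensional disjoint union, viewed as a composition of good morphisms, is precisely the algebra product $W_{\gamma_1}\cdot W_{\gamma_2}\cdots W_{\gamma_r}$ in $\mathcal{A}_n$. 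This displays every good morphism as a polynomial in the $2^n$ elements $W_\gamma$ with coefficients in $\mathbb Z[q^{\pm 1/4}]$, proving the lemma.

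The only subtle point is the final stacking step: one must check that the three-dimensional stack, once pulled back by a planar isotopy, recovers the desired planar diagram without introducing new crossings or changing the blackboard framing. This is handled precisely by the laminar structure of the family together with the fact that an innermost-first stacking can be isotoped to a height-preserving planar diagram; this is a standard verification in the skein-theoretic literature and can be performed exactly as in Bullock's treatment of Kauffman bracket skein algebras of planar surfaces.
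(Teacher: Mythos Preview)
Your argument is correct and is exactly the standard skein-theoretic reduction that the paper has in mind: the paper gives no detailed proof at all, merely saying that ``similarly to the case of Kauffman bracket skein module \cite{Bullock}, one can easily prove'' the lemma. You have spelled out precisely that Bullock-type argument---cable down to color $2$, resolve crossings, delete trivial components, and identify the remaining laminar system of curves in the $n$-punctured disk with a product of the $W_\gamma$'s---so your proposal and the paper's intended proof coincide.
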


Using linearity, we can extend the definition of $J_X$
to $X =T \sqcup L$, where $L$ is {\em any element}
of $\tcS(D^3\setminus T)$. It is known that $J_X$ 
 is invariant under the cablings and skein relations 
(Theorem 4.3 of  \cite{KM}), hence $J_X$ 
 is defined for $L \in S(D^3\setminus T)$.
Moreover, we have

\begin{Alemma} \label{A8}
Both gradings $\gr_\ve$ and $\gr_q$ are preserved under the cabling
and skein relations.
\end{Alemma}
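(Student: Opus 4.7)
The plan is to verify invariance of $\gr_\ve$ and $\gr_q$ under each of the four local relations (two cabling, two skein) separately.

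The cabling relations modify only the color data, leaving the geometric link fixed, so they reduce to an algebraic computation. Relation (a) is immediate: the removed component has $n'_i = n_i-1 = 0$, so it contributes trivially to every term of both gradings. For (b), the identity $L = L'' - L'$ becomes an algebraic identity in $(p_{ij}, l_{ij}, n'_i)$. The key point is that the two parallel strands in $L''$ inherit the same linking numbers $p_{ij}$ and $l_{ij}$ as the original $i$-th component, and their new colors $n'_i-1$ and $1$ combine so that both $\gr_\ve$ and $\gr_q$ exactly reproduce the originals (the diagonal and off-diagonal contributions recombine into the same value). For $L'$, the shift $n'_i \mapsto n'_i - 2$ changes $\gr_\ve$ by $-2l_{ki} \equiv 0 \pmod 2$ and $\gr_q$ by terms all divisible by $4$, once one combines the diagonal contribution $p_{ii}[(n'_i-2)^2 - (n'_i)^2] = -4p_{ii}(n'_i - 1)$, the off-diagonal $-4p_{ij}n'_j$ for $j\neq i$, and the correction $-4(p_{ii}+1)$.

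For the skein relations every strand has color $2$, so every relevant $n'_j$ equals $1$. The smoothed crossing lies in a disc disjoint from $T$, so $L_R$, $L_V$, $L_H$ represent the same class in $H_1(D^3\setminus T;\mathbb{Z}/2)$: the mod-$2$ symmetric difference of any two of the three local configurations bounds in the disc. Since, at $n'_j = 1$, $\gr_\ve(X)_i$ is exactly the mod-$2$ linking of the $i$-th component of $T$ with the total cycle of $L$, the $\gr_\ve$-invariance follows. Specializing the $\gr_q$-formula at $n'_j = 1$ collapses it to
\[
\gr_q(L) \;\equiv\; w(L) + 2\bigl(m(L) + w_{\mathrm{self}}(L)\bigr) \pmod 4,
\]
with $w(L)$ the total writhe, $w_{\mathrm{self}}(L) = \sum_i p_{ii}$ the total self-linking, and $m(L)$ the number of color-$2$ components. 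For relation (a), $\gr_q(U) = 0 + 2(0+1) \equiv 2 \equiv \gr_q(q^{\pm 1/2}) \pmod 4$. For relation (b), one needs $\gr_q(L_R) \equiv \gr_q(L_V) + 1 \equiv \gr_q(L_H) - 1 \pmod 4$, which is equivalent to the classical statement that the Kauffman bracket (and more generally the $n$-colored Jones polynomial) lies in $q^{\gr_q(L)/4}\,\mathbb{Z}[q^{\pm 1}]$, a special case of the Laurent-polynomiality result of \cite{LeDuke}.

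The delicate point, if one wants a self-contained check of $\gr_q$-invariance under skein (b), is the case analysis of how $m$ and $w_{\mathrm{self}}$ change under the two smoothings: the component counts satisfy $|m(L_V) - m(L_H)| = 1$, and $w_{\mathrm{self}}$ shifts because former mutual crossings may become self-crossings of a merged component (or vice versa upon splitting a self-crossing). Combined with the uniform writhe shift $w(L_V) = w(L_H) = w(L_R) - \epsilon$, where $\epsilon$ is the sign of the smoothed crossing, these contributions conspire to produce exactly the required $\pm 1 \pmod 4$ shifts matching the coefficients $q^{\pm 1/4}$. The shortest route, however, is to appeal directly to \cite{LeDuke}.
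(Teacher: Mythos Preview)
Your argument is essentially the paper's: direct verification that each relation is homogeneous in both gradings. Your treatment of the cabling relations is actually more detailed than the paper's (which simply asserts $\gr_q(L'')\equiv\gr_q(L')\equiv\gr_q(L)$ without writing out the computation), and your reduction of $\gr_q$ at color $2$ to $w+2(m+w_{\mathrm{self}})$ is a rewriting of the paper's formula $2\sum_{i<j}p_{ij}+3\sum_i p_{ii}+2m$.

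One point to flag: the appeal to \cite{LeDuke} is not a valid shortcut. Knowing $J_L\in q^{\gr_q(L)/4}\Z[q^{\pm 1}]$ for every link, together with the skein relation for $J$, yields
\[
q^{1/4}J_{L_V}+q^{-1/4}J_{L_H}\in q^{\gr_q(L_R)/4}\Z[q^{\pm 1}].
\]
Since $\Z[q^{\pm 1/4}]=\bigoplus_{k=0}^{3}q^{k/4}\Z[q^{\pm 1}]$ as a $\Z[q^{\pm 1}]$-module, this forces $\gr_q(L_V)+1\equiv\gr_q(L_H)-1\equiv\gr_q(L_R)\pmod 4$ \emph{only when neither $J_{L_V}$ nor $J_{L_H}$ vanishes}, which you cannot guarantee. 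So the combinatorial homogeneity has to be checked directly. Fortunately your sketch of that check---tracking how $m$, $w_{\mathrm{self}}$, and $w$ change under each smoothing, and noting that the redistribution of self-crossings into mutual crossings contributes a multiple of $4$---is exactly what the paper does (it writes out the same-component, positive-crossing case as $-3+2+1\equiv 0$ and $-3-1\equiv 0$, then declares the other cases similar). Your proof stands once you drop the claim that \cite{LeDuke} provides an independent route.
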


\begin{proof}
The statement is obvious for the $\ve$--grading.
For the
$q$--grading, notice that 
\[
\gr_q(L)=2\sum_{1\leq i<j\leq m}p_{ij}n_i' n_j' +
\sum_{1\leq j \leq m}p_{jj}n_j'^2+2\sum_{1\leq j\leq m}(p_{jj}+1)n_j',
\]
and therefore $\gr_q(L'')\equiv \gr_q(L')\equiv\gr_q(L)\pmod{4}$.
This takes care of the cabling relations. 

Let us now assume that all colors of $L$ are equal to 2 and therefore
\[
\gr_q(L)=2\sum_{1\leq i< j\leq m} p_{ij}+3\sum_{i=1}^{m} p_{ii}+2m.
\]
The statement is obvious for the first skein relation.
For the second skein relation, choose an arbitrary orientation on L. Let us first assume that 
the two strands in the crossing depicted in Figure \ref{Skein} come from the same component 
of $L_R$ and that the crossing is positive. Then, $L_V$ and $L_H$ have one positive 
self--crossing less, and $L_V$ has one link component more than $L_R$. Therefore
\begin{eqnarray*}
\gr_q(q^{\frac 14}L_V)&=&\gr_q(L_R)-3+2+1 \equiv \gr_q{L_R} \pmod{4}, \\
\gr_q(q^{-\frac 14}L_H)&=&\gr_q(L_R)-3-1 \equiv \gr_q{L_R} \pmod{4}.
\end{eqnarray*}
It is obvious, that this does not depend on the orientation of $L_R$. If the crossing 
of $L_R$ is negative or the two strands do not belong to the same component of $L_R$, 
the proof works similar.
\end{proof}

\subsubsection*{Basic good morphisms}
Let  $ Z_n$ be $W_\gamma$ for $\gamma=(1,1,\dots, 1)\in (\Z/2\Z)^n$.

\begin{equation*}\label{Zn}
    \def\sss{3em}
	  Z_n  =\incl{\sss}{ZnVers2}\quad\quad
\end{equation*}

\begin{Aproposition} One has a presentation
$$ J_{ Z_n} = 
\sum z^{(n)}_{i_1} \otimes \sum z^{(n)}_{i_2} \otimes \dots \otimes
\sum z^{(n)}_{i_{2n}},$$
such that  $z^{(n)}_{i_{2j-1}}\, z^{(n)}_{i_{2j}}\in v\; \U^1_q$
 for every $j=1,\dots, n$.
\label{3308}
\end{Aproposition}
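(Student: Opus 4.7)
My plan is to compute $J_{Z_n}$ explicitly from the universal $R$-matrix and identify the product of the two tensor factors at each pair $(2j-1, 2j)$ as a multiple of Habiro's quantum Casimir, which lies in $v\U^1_q$. First I would cut the colour-$2$ loop $Z_n$ at a convenient point to obtain an arc that crosses each of the $n$ down-arrows of $I_n$ exactly twice. By the universal-invariant formalism each of the $2n$ crossings contributes a factor of $R^{\pm 1}$, and closing the arc back into the coloured loop is realised as the partial quantum trace over the colour-$2$ representation $V_2$ with insertion of the pivotal element. Expanding each $R^{\pm 1}$ in a $K$-homogeneous basis via the explicit form $R = q^{H\otimes H/2}\sum_{k\geq 0}c_k\tilde F^{(k)}\otimes e^k$, where only the $k=0,1$ terms survive on the two-dimensional $V_2$, yields a formula of the desired shape $J_{Z_n}=\sum z^{(n)}_{i_1}\otimes\cdots\otimes z^{(n)}_{i_{2n}}$ in which the factors at positions $2j-1$ and $2j$ come precisely from the two crossings of $Z_n$ with the $j$-th down-arrow.

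The core of the argument is then the slot-by-slot identification of $z^{(n)}_{i_{2j-1}}z^{(n)}_{i_{2j}}$ with (a multiple of) the quantum Casimir $C=(v-v^{-1})^2 FE + vK + v^{-1}K^{-1}$. Each of the three summands of $C$ has $K$-parity $1$, so $C\in \U_q^1$; moreover $v^{-1}C$ still lies in $\U_q^1$, so in fact $C\in v\U_q^1$. The scalar $v^{\pm 1}$ coming from the $V_2$-trace with pivotal element (via $\tr_{V_2}(\rho(K^{-1})\cdot X) = v^{-1}X_{++} + vX_{--}$) supplies the overall factor of $v$ for each term. Because the contributions at different down-arrows decouple in the $R$-matrix expansion up to the global $V_2$-trace, the parity and $v$-power bookkeeping at each pair $(2j-1,2j)$ may be done independently.

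A cleaner alternative is to proceed by induction on $n$: the base case $n=1$ is precisely the identification of $J_{Z_1}$ with the action of $C$; for the inductive step one isotopes $Z_n$ inside the braided category $\modB$ into a composition of $Z_{n-1}$ on the first $n-1$ strands with a single-slot meridian on the $n$-th strand, using only the structural morphisms $\psi^{\pm 1}_{\modb,\modb}$, $\mu_\modb$, $\eta_\modb$, $\gamma_\pm$. Since each of these structural morphisms preserves the $K$-parity grading (by Corollary~3.2 of \cite{Ha}), the inductive hypothesis together with the base case gives the claim.

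The main obstacle will be the bookkeeping in the direct $R$-matrix computation, where one must verify that the global $V_2$-trace does not entangle the parities or $v$-powers at different slots; this rests ultimately on the two-dimensionality of $V_2$ decoupling into independent contributions at each slot, which in turn reduces to $K$-parity being preserved by each structural morphism. The inductive route therefore appears to be the cleaner of the two.
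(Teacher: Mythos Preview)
Your inductive route has a genuine gap: there is no isotopy or $\modB$-factorisation that expresses $Z_n$ as ``$Z_{n-1}$ on the first $n-1$ strands composed with a single-slot meridian on the $n$th strand''. The curve $Z_n$ is a \emph{single} circle linking all $n$ down-arrows; two disjoint circles---one around $n-1$ strands and one around the last---is a different link in the complement, and the structural morphisms $\psi^{\pm1}_{\modb,\modb},\mu_\modb,\eta_\modb,\gamma_\pm$ of $\modB$ neither create, destroy, nor merge closed coloured components. Corollary~3.2 of \cite{Ha} (which you invoke) concerns tangles without closed coloured components and only asserts preservation of the even part; it does not by itself control the $v$-factor or the odd grading you need here.

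The paper's induction is geometrically different and is the step you are missing. One does not split off a meridian; instead one observes that $Z_{n+1}$ is obtained from $Z_n$ by \emph{duplicating} (cabling) the $n$th down-arrow, so that the single circle now encloses one more strand. By Habiro's cabling principle (Lemma~7.4 of \cite{Ha-b}) this corresponds on the algebra side to applying the coproduct $\Delta$ at that slot, together with the two new $R$-matrix crossings linking the duplicated strands. Packaging these into a single map $\Phi:\U_q\to\U_q^{\otimes 3}$, one has $J_{Z_{n+1}}=(1_{\modb^{\otimes n-1}}\otimes\Phi\otimes\id_\uparrow)(J_{Z_n})$. The heart of the argument is then a direct calculation (the paper's Lemmas~A.11 and~A.12) showing that if $x$ is $K$-homogeneous and $\Phi(x)=\sum x_{i_0}\otimes x_{i_1}\otimes x_{i_2}$, then $x_{i_0}x_{i_1}$ and $x_{i_2}$ have the same $\ve$-parity as $x$. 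Together with the base case $J_{Z_1}=C\otimes 1$ and $C\in v\,\U^1_q$, this propagates the claim. Your direct $R$-matrix approach would in effect have to rediscover exactly this control of $\Phi$; the ``decoupling'' you hope for does \emph{not} come for free from the two-dimensionality of $V_2$, but from this explicit coproduct computation.
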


\begin{Acorollary}\label{corA10}
$J_{ Z_n}$ satisfies Proposition
\ref{MM2}.
\end{Acorollary}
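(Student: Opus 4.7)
The plan is to derive Corollary \ref{corA10} as a direct consequence of Proposition \ref{3308}, after computing the two gradings of $Z_n$. First I would read off the gradings. Since $Z_n$ is a single simple closed curve of color $2$ linking each of the $n$ downward arrows of $I_n$ exactly once, with framing zero, we have one component ($m=1$) with $n_1'=1$, $p_{11}=0$, and $l_{i,1}=1$ for every $i=1,\ldots,n$. The defining formulas then yield $\gr_\ve(Z_n)=(1,1,\ldots,1)\in(\Z/2\Z)^n$ and $\gr_q(Z_n)=2(p_{11}+1)n_1'\equiv 2\pmod 4$. Hence Proposition \ref{MM2} specialized to $W=Z_n$ reduces to the statement that, for every $x\in\U_q^{\ve_1}\otimes\cdots\otimes\U_q^{\ve_n}$,
\[
J_{Z_n}(x)\;\in\; v\cdot\bigl(\U_q^{\ve_1+1}\otimes\cdots\otimes\U_q^{\ve_n+1}\bigr),
\]
since $q^{\gr_q(Z_n)/4}=q^{1/2}=v$.

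Second, I would feed in the decomposition $J_{Z_n}=\sum z^{(n)}_{i_1}\otimes\cdots\otimes z^{(n)}_{i_{2n}}$ from Proposition \ref{3308}. In Habiro's formalism for universal invariants of morphisms in $\modB$, the $2n$-fold tensor $J_{Z_n}$ acts on an $n$-fold input $x_1\otimes\cdots\otimes x_n$ by grouping its $2n$ tensor slots into $n$ consecutive pairs and letting the pair $(z^{(n)}_{i_{2j-1}},z^{(n)}_{i_{2j}})$ act on $x_j$ by a sandwiching multiplication dictated by the $\modb$ up/down convention. The parity assertion then follows immediately: $\U_q$ is $\Z/2$-graded as an algebra, and Proposition \ref{3308} guarantees that each pair product $z^{(n)}_{i_{2j-1}}\,z^{(n)}_{i_{2j}}$ has parity $1$, so sandwiching an element of $\U_q^{\ve_j}$ gives an element of $\U_q^{\ve_j+1}$ in the $j$-th slot, producing precisely the grading shift $\gr_\ve(Z_n)=(1,\ldots,1)$.

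The main obstacle will be the bookkeeping of the overall $v$-power. A naive reading of Proposition \ref{3308} suggests that each of the $n$ pair products supplies one factor of $v$, so that the action would produce $v^n$; yet $\gr_q(Z_n)\equiv 2\pmod 4$ forces the output to carry only a single factor $v$. The reconciliation must exploit the fact that $Z_n$ is a closed loop: passing from $J_{Z_n}\in U_h^{\hat\otimes 2n}$ to its action on $U_h^{\hat\otimes n}$ involves the trace-like contraction implicit in the closure of the loop, and this contraction is precisely what collapses the $n$ candidate $v$-factors to the single $v$ predicted by $q^{\gr_q(Z_n)/4}$. Pinning down this $v$-collapse is the technical heart of the corollary and is the reason the statement of Proposition \ref{3308} is formulated with its particular $v$-shift built into each pair.
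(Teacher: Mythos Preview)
Your first two paragraphs are correct and coincide with the paper's argument: compute $\gr_\ve(Z_n)=(1,\ldots,1)$ and $\gr_q(Z_n)\equiv 2\pmod 4$, write $J_{Z_n}(x)=\sum z^{(n)}_{i_1}x_1 z^{(n)}_{i_2}\otimes\cdots\otimes z^{(n)}_{i_{2n-1}}x_n z^{(n)}_{i_{2n}}$, and apply Proposition~\ref{3308}.

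The gap is in your third paragraph. There is no trace-like contraction in passing from the element $J_{Z_n}$ to the operator, and the $v^n$ problem you anticipate does not arise. The factor $v$ in Proposition~\ref{3308} is a \emph{single global scalar}, not $n$ independent ones. If you unwind the inductive proof you find $J_{Z_n}=v\sum u_1\otimes\cdots\otimes u_{2n}$ with each $u_k\in\U_q$ homogeneous and $u_{2j-1}u_{2j}\in\U_q^1$ for every $j$: the lone $v$ comes from $C\in v\,\U_q^1$ at the base of the induction, while each application of $\Phi$ (Lemma~\ref{3303}) stays in $\U_q^{\otimes 3}$ over $\Z[q^{\pm1}]$ and introduces no further half-integer power of $q$. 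Since a scalar moves freely across tensor slots, the phrasing ``$z_{2j-1}z_{2j}\in v\,\U_q^1$ for every $j$'' records one $v$ that can be attributed to any chosen pair, not a separate $v$ for each. Hence
\[
J_{Z_n}(x)=v\sum (u_1 x_1 u_2)\otimes\cdots\otimes(u_{2n-1}x_n u_{2n})\in q^{1/2}\bigl(\U_q^{\ve_1+1}\otimes\cdots\otimes\U_q^{\ve_n+1}\bigr)
\]
directly, with no collapse mechanism required. Your proposed resolution via a trace is incorrect: the loop closure in $Z_n$ is already accounted for in the element $J_{Z_n}$ itself (it is what produces the Casimir $C$ in the case $n=1$), and the passage from element to operator is pure sandwiching, nothing more.
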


\begin{proof}
Assume  $x\in \U_q^{\ve_1} \otimes \dots \otimes \U_q^{\ve_n}$,
then  we have
$$ J_{ Z_n}(x) = 
\sum z^{(n)}_{i_1} x_1  z^{(n)}_{i_2} \otimes \dots \otimes
\sum z^{(n)}_{i_{2n-1}} x_n z^{(n)}_{i_{2n}}.$$
Hence, by Proposition \ref{3308}, we get
$$ 
J_{ Z_n}(x) \in q^{1/2} \left( \U_q^{\ve_1'} \otimes \dots \otimes
 \U_q^{\ve_n'}\right), \quad \text{where} \quad (\ve'_1,\dots, \ve'_m) =
 (\ve_1,\dots, \ve_n) + (1,1,\dots, 1).
$$
The claim follows now from the fact that
$\gr_\ve( Z_n)=(1,1,\dots,1)$ and $\gr_q(L)=2$.
\end{proof}
\subsection{Proof of Proposition \ref{3308}}
 The statement holds true for $J_{ Z_1}= C\otimes \id_\uparrow$. 
Now Lemma 7.4 in \cite{Ha-b} states that
 applying
 $\Delta$ to the $i$th component of 
the universal quantum invariant of a tangle 
is the same 
as duplicating the $i$th component.
Using this fact we represent
\be\label{Jn} J_{Z_{n+1}} = (1_{\modb^{\otimes n-1}}\otimes  {\Phi}
\otimes \id_\uparrow) \left(  J_{Z_{n}} \right),\ee
where $\Phi$ is defined as follows. 
For $x \in \U_q$ 
with  $\Delta (x)=\sum x_{(1)}\otimes x_{(2)}$, we put
\[
\Phi(x) :=   \sum_{(x), m,n} x_{(1)} \otimes \beta_m S(\beta_n)
\otimes \alpha_n \, x_{(2)} \alpha_m
\]
where the $R$--matrix is given by $R =\sum_l {\alpha_l \otimes \beta_l}$.
See Figure below for a picture. 
\begin{figure}[!h]
\mbox{
\input{Phi.pstex_t}}
\end{figure}

We are left with the computation of the 
$\ve$--grading of each component of $\Phi(x)$.

In $\U_q$, in addition to the $\ve$--grading,
 there is also the $K$--grading, defined by
$|K|=|K^{-1}|=0, |e|=1, |F|=-1$. In general, the co--product $\Delta$ does not
preserve the $\ve$--grading. However, we have the following.
\begin{Alemma} Suppose $x\in \U_q$ is homogeneous in both $\ve$--grading and $K$--grading. 
Then we have a presentation
$$
\Delta(x) = \sum_{(x)} x_{(1)} \otimes x_{(2)},
$$
where each $x_{(1)}$, $x_{(2)}$ are homogeneous with respect to the
$\ve$--grading and $K$--grading. In addition,
for $x\in \U_q^\ve$, we have $x_{(2)}\in \U_q^\ve$ and
$x_{(1)} \, K^{-|x_{(2)}|}\in \U_q^\ve$.

\label{3304}
\end{Alemma}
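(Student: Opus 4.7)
The plan is to prove the lemma by induction on the word length of $x$ in the standard generators $K^{\pm 1}, e, \tilde F^{(1)}$ of $\U_q$. For the base case, I write out the coproduct of each generator, which splits into at most two bi-homogeneous summands, and verify by direct inspection that in each summand $x_{(2)} \in \U_q^{\ve(x)}$ and $x_{(1)} K^{-|x_{(2)}|} \in \U_q^{\ve(x)}$. For example, $\Delta(K^{\pm 1}) = K^{\pm 1} \otimes K^{\pm 1}$ gives $x_{(1)} K^{-|x_{(2)}|} = K^{\pm 1}$, of the correct parity; and in $\Delta(e)$ and $\Delta(\tilde F^{(1)})$ the term whose left tensor factor is a single $K$ gets exactly compensated by the twist $K^{-|x_{(2)}|}$.

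For the inductive step, suppose the lemma holds for bi-homogeneous $x \in \U_q^{\ve_x}$ and $y \in \U_q^{\ve_y}$; I claim it holds for $xy$. Since $\Delta$ is an algebra homomorphism, $\Delta(xy) = \sum x_{(1)} y_{(1)} \otimes x_{(2)} y_{(2)}$, and because multiplication in $\U_q$ preserves both the $\ve$-grading and the $K$-grading, each tensor factor remains bi-homogeneous. The first condition is additive: $\ve(x_{(2)} y_{(2)}) = \ve_x + \ve_y = \ve(xy)$. For the second, setting $a = |x_{(2)}|$ and $b = |y_{(2)}|$, I rearrange
\[
x_{(1)} y_{(1)} K^{-a-b} = \bigl(x_{(1)} K^{-a}\bigr) \cdot \bigl(K^{a} y_{(1)} K^{-b} K^{-a}\bigr).
\]
By hypothesis $x_{(1)} K^{-a} \in \U_q^{\ve_x}$ and $y_{(1)} K^{-b} \in \U_q^{\ve_y}$. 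Conjugation by $K^{a}$ acts on each PBW basis element $\tilde F^{(i)} K^{j} e^{k}$ by a scalar power of $q$, so it preserves the $\ve$-grading; hence $K^{a}(y_{(1)} K^{-b})K^{-a} \in \U_q^{\ve_y}$ as well. The product therefore lies in $\U_q^{\ve_x + \ve_y} = \U_q^{\ve(xy)}$, closing the induction.

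If the presentation obtained along the way contains a summand whose tensor factors are not bi-homogeneous, I refine it by decomposing each factor into its bi-homogeneous components in $(\ve, |\cdot|)$; both required conditions are linear in the summands, so they pass to each component. The main obstacle is the multiplicativity step above: one must find an algebraic rearrangement that decouples the two hypotheses into a condition involving only $x$ and one involving only $y$. The key ingredient that makes this work is that conjugation by powers of $K$ preserves the $\ve$-grading, since the PBW basis consists of eigenvectors for $\ad K$. Once this observation is in place, the rest of the argument is a routine induction on word length in the generators.
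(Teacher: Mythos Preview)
Your proof is correct and follows essentially the same approach as the paper's. The paper's proof is extremely terse: it simply asserts that the statement is multiplicative (if it holds for $x$ and $y$ then it holds for $xy$) and then says it suffices to check the generators $e$, $\tilde F^{(1)}$, and $K$, for which it follows from the explicit coproduct formulas. You have carried out exactly this argument, supplying the details the paper omits --- in particular the rearrangement $x_{(1)} y_{(1)} K^{-a-b} = (x_{(1)} K^{-a})\cdot K^{a}(y_{(1)} K^{-b})K^{-a}$ and the observation that conjugation by powers of $K$ preserves the $\ve$-grading, which is precisely what makes the multiplicativity go through.
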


\begin{proof} If the statements hold true for $x, y \in \U_q$, then they hold
true for $xy$.
Therefore, it is enough to check the statements for the generators $e, \tF^{(1)},$ and $K$, for which
they follow from explicit formulas of the co--product.
\end{proof}

\begin{Alemma} Suppose $x \in \U_q$ is homogeneous in both $\ve$--grading and
$K$--grading. 
There is a presentation
$$
\Phi(x) =\sum x_{i_0}\otimes x_{i_1}\otimes  x_{i_2}
$$
such that each $x_{i_j}$ is homogeneous in both $\ve$--grading and
$K$--grading, and for $x \in \U^\ve_q$, 
$x_{i_2}$ and 
 $ x_{i_0}\, x_{i_1}$ belong to $\U^\ve_q$.
\label{3303}
\end{Alemma}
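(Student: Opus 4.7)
The plan is to prove the final lemma by directly computing the tensor factors of $\Phi(x) = \sum x_{(1)} \otimes \beta_m S(\beta_n) \otimes \alpha_n x_{(2)} \alpha_m$, combining the preceding Lemma~\ref{3304} for the coproduct with an appropriate presentation of the universal $R$-matrix in the $h$-adic completion $\tilde{\U}_q \hat\otimes \tilde{\U}_q$.

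First, I would apply Lemma~\ref{3304} to the homogeneous $x \in \U_q^\ve$ to decompose $\Delta(x) = \sum x_{(1)} \otimes x_{(2)}$ into summands that are homogeneous in both the $\ve$- and $K$-gradings, with $x_{(2)} \in \U_q^\ve$ and the $\ve$-grading of $x_{(1)}$ congruent to $\ve + |x_{(2)}| \pmod 2$, where $|\cdot|$ denotes the $K$-grading. Next, I would invoke the standard integral presentation of the $R$-matrix: starting from $R = v^{H \otimes H/2} \sum_n c_n\, e^n \otimes \tilde F^{(n)} K^{-n}$ and decomposing the diagonal factor $v^{H \otimes H/2}$ spectrally along $K$-weight spaces, one obtains an expansion $R = \sum_l \alpha_l \otimes \beta_l$ in $\tilde\U_q \hat\otimes \tilde\U_q$ such that each term is bihomogeneous, with $|\alpha_l| + |\beta_l| = 0$ and with the $\ve$-grading of $\alpha_l$ (respectively $\beta_l$) congruent to $|\alpha_l|$ (respectively $|\beta_l|$) modulo $2$. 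This is the structural input underlying Theorem~4.1 of \cite{Ha} and is spelled out in detail in \cite{Ha-b}.

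Substituting both expansions into the formula for $\Phi(x)$ and setting $x_{i_0} = x_{(1)}$, $x_{i_1} = \beta_m S(\beta_n)$, $x_{i_2} = \alpha_n x_{(2)} \alpha_m$, each $x_{i_j}$ is a product of bihomogeneous elements and is therefore itself bihomogeneous, using that the antipode $S$ preserves the $\ve$-grading and negates the $K$-grading. To verify $x_{i_2} \in \U_q^\ve$, I compute the $\ve$-grading of $\alpha_n x_{(2)} \alpha_m$ as $|\alpha_n| + \ve + |\alpha_m| \pmod 2$; for $x_{i_0} x_{i_1} \in \U_q^\ve$, the $\ve$-grading of $x_{(1)} \beta_m S(\beta_n)$ is $(\ve + |x_{(2)}|) + |\beta_m| + |\beta_n| \equiv \ve + |x_{(2)}| - |\alpha_m| - |\alpha_n| \pmod 2$. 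Both congruences reduce to $\ve$ once one establishes the key parity relations $|\alpha_n| + |\alpha_m| \equiv 0 \pmod 2$ and $|x_{(2)}| \equiv |\alpha_m| + |\alpha_n| \pmod 2$, which come from the topological constraint that the clasp in the diagrammatic picture of $\Phi$ is a null-homotopic loop together with a refinement of the coproduct decomposition compatible with the clasp.

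The main obstacle is securing the refined presentation of $R$ together with its compatibility with the $\ve$-grading: the factor $D = v^{H \otimes H/2}$ lies only in the $h$-adic completion, so one must verify that its spectral decomposition produces terms in $\tilde{\U}_q$ with well-defined $\ve$-grading, equivalently that the subspaces $\tilde\U_q^0 \hat\otimes \tilde\U_q^0$ and $\tilde\U_q^1 \hat\otimes \tilde\U_q^1$ furnish a splitting of the completion compatible with $R$. This is the same structural point that enters the evenness arguments of Section~3 of \cite{Ha}, and invoking it here reduces the lemma to the bookkeeping above.
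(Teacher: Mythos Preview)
Your proposal has a genuine gap, concentrated in the two places you yourself flag as delicate.

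First, the ``key parity relations'' $|\alpha_n|+|\alpha_m|\equiv 0$ and $|x_{(2)}|\equiv |\alpha_n|+|\alpha_m|\pmod 2$ cannot both hold term by term: together they force $|x_{(2)}|\equiv 0$ for every summand of $\Delta(x)$, which already fails for $x=e$ (where $\Delta(e)=e\otimes 1+K^2\otimes e$ has a term with $|x_{(2)}|=1$). The appeal to a ``topological constraint'' from the null-homotopic clasp is not a proof; nothing in the algebra ties the independent $R$-matrix indices $n,m$ to the $K$-degree of $x_{(2)}$ in this way. Second, the ``spectral decomposition'' of $D=v^{H\otimes H/2}$ into bihomogeneous pieces of $\tilde\U_q\,\hat\otimes\,\tilde\U_q$ does not exist in the form you need: the weight projectors are not elements of $\tilde\U_q$, and Section~3 of \cite{Ha} does \emph{not} provide such a decomposition.

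The paper's proof avoids both issues by never decomposing $D$. One writes $R=D\bigl(\sum_n q^{n(n-1)/2}\tF^{(n)}K^{-n}\otimes e^n\bigr)$, uses $(\id\otimes S)D=D^{-1}$ so that the two diagonal parts coming from $\alpha_n\otimes\beta_n$ and $\alpha_m\otimes\beta_m$ can be commuted together, and then pushes them past the $K$-homogeneous factors via $D^{\pm1}(1\otimes y)=(K^{\pm|y|}\otimes y)D^{\pm1}$. The two $D$'s cancel, leaving the explicit formula
\[
\Phi(x)=\sum_{(x),n,m}(-1)^n q^{-\frac12 m(m+1)-n(|x_{(2)}|+1)}\;
x_{(1)}\otimes e^{m}e^{n}K^{-|x_{(2)}|}\otimes \tF^{(n)}x_{(2)}\tF^{(m)},
\]
in which every tensor factor visibly lies in $\U_q$ and is bihomogeneous. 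Since $\tF^{(n)}$ and $e^n$ have $\ve$-grading $0$, the third factor has the $\ve$-grading of $x_{(2)}$, namely $\ve$, and the product of the first two factors is $x_{(1)}K^{-|x_{(2)}|}$ times an even element, which is in $\U_q^\ve$ by Lemma~\ref{3304}. This is exactly the cancellation you were hoping to extract from parity relations, but it comes from the algebraic identity $(\id\otimes S)D=D^{-1}$ rather than from any term-by-term constraint on a decomposition of $R$.
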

\begin{proof}
We put $D=\sum D' \otimes D'':=v^{\frac 12 H\otimes H}$.
Using (see e.g. \cite{Ha}) 
\[
R=D\left(\sum_{n} q^{\frac 12 n(n-1)}\tF^{(n)}K^{-n}\otimes e^n\right),
\]
we get
\begin{eqnarray*}
\Phi(x)&=&\sum_{(x),n,m}
q^{\frac{1}{2}\left(m(m-1)+n(n-1)\right)}
x_{(1)}\otimes
D''_2 e^m S(D''_1e^{n})\otimes
D'_1\tF^{(n)}K^{-n}x_{(2)}D'_2\tF^{(m)}K^{-m}\\
&=&
\sum_{(x),n,m}(-1)^n q^{-\frac 12 m(m+1)-n(|x_{(2)}|+1)}
x_{(1)}\otimes e^{m}e^n K^{-|x_{(2)}|}\otimes \tF^{(n)}x_{(2)}\tF^{(m)},
\end{eqnarray*}
where
we used $(\id \otimes S)D=D^{-1}$ and $D^{\pm1}(1\otimes x)=(K^{\pm|x|}\otimes x)D^{\pm 1}$ for
homogeneous $x\in \U_q$ with respect to the $K$--grading.
Now, the claim follows from Lemma \ref{3304}.
\end{proof}
 By induction on $n$ in \eqref{Zn}, given that
$C\in v \;\U^1_q$,
Lemma \ref{3303} implies Proposition \ref{3308}.
\qed


\end{document}